\numberwithin{equation}{section}
\def\inte#1{
\displaystyle\mathop{#1\kern0pt}^\circ }
\let\pa=\partial
\let\f=\frac
\def\pa{\partial}
\def\virgp{\raise 2pt\hbox{,}}
\def\cdotpv{\raise 2pt\hbox{;}}
\def\eqdefa{\buildrel\hbox{\footnotesize def}\over =}
\def\C{\mathop{\mathbb C\kern 0pt}\nolimits}
\def\DD{\mathop{\mathbb D\kern 0pt}\nolimits}
\def\EE{\mathop{{\mathbb E \kern 0pt}}\nolimits}
\def\K{\mathop{\mathbb K\kern 0pt}\nolimits}
\def\N{\mathop{\mathbb N\kern 0pt}\nolimits}
\def\Q{\mathop{\mathbb Q\kern 0pt}\nolimits}
\def\R{\mathop{\mathbb R\kern 0pt}\nolimits}
\def\SS{\mathop{\mathbb S\kern 0pt}\nolimits}
\def\ZZ{\mathop{\mathbb Z\kern 0pt}\nolimits}
\def\TT{\mathop{\mathbb T\kern 0pt}\nolimits}
\def\P{\mathop{\mathbb P\kern 0pt}\nolimits}
\newcommand{\beq}{\begin{equation}}
\newcommand{\eeq}{\end{equation}}
\newcommand{\ben}{\begin{eqnarray}}
\newcommand{\een}{\end{eqnarray}}
\newcommand{\beno}{\begin{eqnarray*}}
\newcommand{\eeno}{\end{eqnarray*}}
\newtheorem{thm}{Theorem}[section]
\newtheorem{lem}{Lemma}[section]
\newtheorem{rmk}{Remark}[section]
\newtheorem{prop}{Proposition}[section]
\begin{document}
\title[High order approximation for the Boltzmann equation with cutoff]
{High order approximation for the Boltzmann equation without angular cutoff}

\author[L. He and Y. Zhou]{Lingbing He and Yulong Zhou}
\address[L. He]{Department of Mathematical Sciences, Tsinghua University,
Beijing 100084,  P. R.  China.} \email{lbhe@math.tsinghua.edu.cn}

\address[Y. Zhou]{Department of Mathematics, Hong Kong Baptist University,
Hong Kong,  P. R.  China.} \email{yulong.zhou2012@gmail.com}

\maketitle

\begin{abstract} In order to solve the Boltzmann equation numerically, in the present work, we  propose a new model equation to approximate the Boltzmann equation without angular cutoff.
Here the approximate equation incorporates Boltzmann collision operator with angular cut-off  and  the Landau collision operator. As a first step,
we prove the well-posedness theory for our approximate equation. Then in the next step
 we  show the error estimate between the solutions to the approximate equation and the original equation.  Compared to the standard angular cut-off approximation method, our method results in higher order of accuracy.
 \end{abstract}


\noindent {\sl Keywords:} {homogeneous  Boltzmann
equation, long-range interactions, hard potentials, high order approximation.}

\vskip 0.2cm

\noindent {\sl AMS Subject Classification (2010):} {35Q20, 35R11, 75P05.}


\section{Introduction}
\subsection{The Boltzmann equation}
Our interest is to consider the numerical method for the spatially homogeneous Boltzmann equation with long-range interaction in the case of hard potentials. Here, the spatial homogeneity  means the unknown function is  assumed to be independent of the position variables.
In this case, the Boltzmann equation  reads:
\begin{eqnarray}\label{homb}
\partial _t f =Q(f,f),
\end{eqnarray}
where $f(t,v)\geq 0$ is the   distribution
function   of collision particles which at time
$t\geq 0$  move with velocity
$v\in\R^3$. The Boltzmann collision operator $Q$ is a bilinear
operator which acts only on the velocity variables $v$, that is,
\beno Q(g,h)(v)\eqdefa
\int_{\R^3}\int_{\SS^{2}}B(v-v_*,\sigma)(g'_*h'-g_*h)d\sigma dv_*.
\eeno Here we use the standard shorthand $h=h(v)$, $g_*=g(v_*)$,
$h'=h(v')$, $g'_*=g(v'_*)$ where $v'$, $v_*'$ are given by
\begin{eqnarray}\label{e3}
v'=\frac{v+v_{*}}{2}+\frac{|v-v_{*}|}{2}\sigma\ ,\ \ \
v'_{*}=\frac{v+v_{*}}{2}-\frac{|v-v_{*}|}{2}\sigma\
,\qquad\sigma\in\SS^{2}.
\end{eqnarray}

The nonnegative function $B(v-v_*,\sigma)$  in the collision
operator is called the Boltzmann collision kernel. It is always
  assumed to depend only on $|v-v_{*}|$ and $\langle\frac{v-v_{*}}{|v-v_{*}|},\sigma
  \rangle$. We introduce the angle variable $\theta$ through
  $\cos\theta=\langle\frac{v-v_{*}}{|v-v_{*}|},\sigma
  \rangle$.  Without loss of generality, we may
assume that $B(v-v_{*},\sigma)$ is supported in the set
 $0\leq\theta\leq\frac{\pi}{2}$ , i.e,
$\langle\frac{v-v_{*}}{|v-v_{*}|},\sigma
  \rangle\ge0
$, for otherwise $B$ can be replaced by its symmetrized form:\beno
\bar{B}(v-v_{*},\sigma)=[B(v-v_{*},\sigma)+B(v-v_{*},-\sigma)]\mathbf{1}_{\langle\frac{v-v_{*}}{|v-v_{*}|},\sigma
  \rangle\ge0}.\eeno
Here, $\mathbf{1}_E$ is the characteristic function of the set $E$.

\subsection{Assumptions on the collision kernel} We consider the  collision kernel
satisfying the following assumptions:
\begin{itemize}
\item (A-1) The cross-section $B(v-v_{*},\sigma)$ takes a product form of
\beno  B(v-v_{*},\sigma)=\Phi(|v-v_*|)b(\cos\theta),\eeno
 where both $\Phi$ and $b$ are nonnegative functions.
\item (A-2) The angular function $b(t)$ is not locally integrable and it
satisfies \begin{eqnarray*}  K^{-1}\theta^{-1-2s} \le\sin\theta
b(\cos\theta)\le K
 \theta^{-1-2s},  \quad
\mbox{with}\   0<s<1,\ K \geq 1.
\end{eqnarray*}
\item (A-3) The kinetic factor $\Phi$ takes the form of \beno
\Phi(|v-v_*|)=|v-v_{*}|^{\gamma}.\eeno
\item (A-4) The parameter $\gamma$ verifies that $0 < \gamma\leq 2$.
\end{itemize}
We remark that
under assumption (A-2), we have $A_{2} \eqdefa \int_{\SS^2}b(\cos\theta)\sin^2\theta d\sigma < \infty$.

The solutions of the Boltzmann equation \eqref{homb} have the fundamental physical properties of conserving the total mass, momentum and kinetic energy, that is, for all $t\ge0$,
\beno  \int_{\R^3} f(t,v)\phi(v)dv=\int_{\R^3} f(0,v)\phi(v)dv,\quad \phi(v)=1,v,|v|^2.\eeno
Moreover, there exists a quantity called entropy satisfying the Boltzmann's $H$ theorem, which formally is
\beno -\f{d}{dt} \int_{\R^3} f\log f dv= -\int_{\R^3} Q(f,f)\log f dv\ge0.\eeno

\subsection{Existing results, motivations and difficulties}
The well-posedness of the spatially homogeneous Boltzmann equation with angular cut-off, that is when $\int_{0}^{\pi/2} \sin\theta b(\cos\theta)d\theta < \infty$, had been investigated by many authors. For the hard potentials, Arkeryd [8] and Mischler-Wennberg [20] established the existence and uniqueness of the solutions in weighted $L^{1}$ space. Recently, Lu-Mouhot in \cite{lm} extended the results to the space of non-negative measure with finite non-increasing kinetic energy. For the well-posedness of the spatially homogeneous Boltzmann equation without angular cut-off, we refer to \cite{he3} and the references therein. As for the regularity theory of the equation, we refer to \cite{mv} for the analysis of the positive part of the collision operator and the propagation of smoothness in the case of angular cut-off and  refer to \cite{amuxy6}, \cite{chenhe}, \cite{hmuy}  and \cite{sl} in the case of long-range interaction.

For any $0 <\epsilon \leq \frac{\sqrt{2}}{2}$, let $b^{\epsilon} = b\mathbf{1}_{\sin\frac{\theta}{2}\geq\epsilon}$, and $Q^{\epsilon}$ be the operator associated to the angular cut-off kernel $B^{\epsilon}(v-v_{*},\sigma)=|v-v_{*}|^{\gamma}b^{\epsilon}(\cos\theta)$. That is,
\beno Q^{\epsilon}(g,h)(v)\eqdefa
\int_{\R^3}\int_{\SS^{2}}B^{\epsilon}(v-v_*,\sigma)(g'_*h'-g_*h)d\sigma dv_*.
\eeno
Then the angular cut-off Boltzmann equation
\begin{eqnarray}\label{cutoffboltzmann}
\partial _t f =Q^{\epsilon}(f,f)
\end{eqnarray}
is well-posed(see \cite{HeJiang}). And moreover if $f$ and $f^{\epsilon}$ are solutions to the Boltzmann equation \eqref{homb} and the cutoff Boltzmann equation \eqref{cutoffboltzmann} with the same initial datum $f_{0}$ respectively, then one has
\beno
f = f^{\epsilon} + O(\epsilon^{2-2s}).
\eeno

The cut-off Boltzmann operator $Q^{\epsilon}$ omits all grazing collisions and then results in an error of order $2-2s$. We emphasize that the cutoff Boltzmann equation \eqref{cutoffboltzmann} is not a good approximation to the Boltzmann equation \eqref{homb} as the singularity parameter $s$ approaches to $1$.

The effect of grazing collisions has been studied extensively, and we refer to \cite{ld1} and \cite{he1}. It is proved that the limit of concentrating grazing collisions leads to the Landau collision operator. Mathematically, if denote $b_{\epsilon} = b\mathbf{1}_{\sin\frac{\theta}{2}\leq\epsilon}$, and let $Q_{\epsilon}$ be the operator associated to $B_{\epsilon}(v-v_{*},\sigma)=|v-v_{*}|^{\gamma}b_{\epsilon}(\cos\theta)$, according to \cite{he1}, we shall have
\begin{eqnarray}\label{appqwithql}
 ||\epsilon^{2-2s}Q_{L}(f,f)-Q_{\epsilon}(f,f)||_{L^{1}} \lesssim \epsilon^{3-2s}||f||^{2}_{H^{5}_{\gamma+12}},
\end{eqnarray}
where  the Landau collision operator $Q_{L}$ is defined as
\beno Q_{L}(g,h)(v) \eqdefa
\nabla_{v}\cdot\{\int_{\R^3}a(v-v_{*})[g(v_{*})\nabla_{v}h(v)-\nabla_{v}g(v_{*})h(v)]dv_{*}\}.
\eeno
Here the symmetrical matrix $a$ is given by
\begin{eqnarray}\label{matrix}
a(v)  = \Lambda |v|^{\gamma + 2}(I - \frac{v \otimes v}{|v|^{2}}),
\end{eqnarray}
where $\Lambda$ is a constant.

This motivates us to compensate the omission of grazing collisions by Landau operator. Specifically,
we consider the operator
\begin{eqnarray}\label{approximatedoperator}
  M^{\epsilon}(g,h) \eqdefa Q^{\epsilon}(g,h)+\epsilon^{2-2s}Q_{L}(g,h),
\end{eqnarray}
and propose our approximate equation,
\begin{eqnarray}\label{approximated}
\partial _t f = M^{\epsilon}(f,f).
\end{eqnarray}
If $\tilde{f}^{\epsilon}$ is the solution to equation \eqref{approximated}, we will prove
\begin{eqnarray}\label{errororder}
f = \tilde{f}^{\epsilon} + O(\epsilon^{3-2s}).
\end{eqnarray}
That is, by adding Landau operator to the cutoff Boltzmann equation, we increase the order of error from $2-2s$ to $3-2s$. The accuracy of approximation of the Boltzmann equation \eqref{homb} by equation \eqref{approximated} remains even if the singularity parameter $s$ goes to $1$.
Another motivation for studying equation \eqref{approximated} is the recent development of numerical methods. We believe that our approximate equation can be solved numerically. In this regard, see next subsection for a detailed discussion.
We emphasize that the solutions of our approximate equation \eqref{approximated} also have the above mentioned properties, namely, conservation of mass, moment, energy and entropy dissipation.

In the current paper, we study the well-posedness of equation \eqref{approximated} and then give the error analysis of the approximate equation (\ref{approximated}) and the original Boltzmann equation \eqref{homb}.
There are two main difficulties in the current paper. One is to show the existence of a non-negative  solution to equation \eqref{approximated}. We proceed by constructing a sequence of convergent non-negative functions with its limit being the solution. Since we consider hard potentials ($\gamma > 0$), there will be an increase of weight at each iteration. Observing the coefficient before the weight increased term is strictly less $1$, we prove that, on a whole level, the increased weight is limited. The other difficulty is related to the estimate of the error function $F^{\epsilon}_{R}$ as defined in (\ref{errorf}). Again, weight increase problem happens here and another problem  is no sign information of $F^{\epsilon}_{R}$. We circumvent the problem  of lacking sign information by writing the equation of error function in a suitable way.  The weight increase problem is dealt with by carefully separating the integration region such that either  the  increased  weight is eliminated or the coefficient before the weight increased term is controlled as desired.

\subsection{Existing numerical results and future work}
Our approximate equation contains both the angular cut-off Boltzmann operator $Q^{\epsilon}$ and Landau operator $Q_{L}$. Numerical methods of the Boltzmann equation and Landau equation have been investigated extensively. The most famous one is Kac's program. Kac started from the Markov process corresponding to collisions only, and try to prove the limit towards the
spatially homogeneous Boltzmann equation. For Kac's program approximating Boltzmann equation, we refer to the recent work \cite{mm} and the references therein. In \cite{mm}, the authors proved the propagation of chaos quantitatively in an abstract framework by proving stability and convergence estimates between linear semigroups. They then applied their results to prove the propagation of chaos of Kac's program in the cases of hard sphere model ($B(v-v_{*}, \cos\theta) = |v-v_{*}|$) and true Maxwell molecules ($B(v-v_{*}, \cos\theta) = b(\cos\theta)$).

As for particle system approximating the Landau equation, we refer to \cite{fg2}  and the references therein. The authors in \cite{fg2} proved quantitatively the propagation of chaos for a $N$-particle continuous drift diffusion process under the cases of Maxwell molecules ($\gamma =0$) and hard potentials ($0 < \gamma \leq 1$).

As one can see from above, the Boltzmann equation corresponds to the limit of jump processes, while the Lanau equation corresponds to the limit of continuous processes. If we are to numerically solve our approximate equation (\ref{approximated}), we need some jump-diffusion processes.  Actually, the method in \cite{mm} is general and robust to deal with mixture of
jump and diffusion processes. As shown to be successful in \cite{mmw},  the authors considered the Boltzmann equation for diffusively excited granular media, used jump-diffusion processes to approximate it, and then proved the  propagation of chaos. The jump part is the Boltzmann operator with an integrable kernel, while the diffusive part is a Laplace operator. We know that the Landau operator behaves like the Laplace operator, except with some compensation to conserve energy.

In the recent work \cite{fg3}, the authors replaced the small collisions by a small diffusion term to approximate the Kac equation without cutoff, and successfully built a stochastic particle system to approximate the solution of the Kac equation without cutoff. The Kac equation is a one-dimensional case of the Boltzmann equation.

Thanks to the above breakthroughs, our approximate equation (\ref{approximated}) has great potential to be solved numerically. In our future work, we will build a particle system based on equation (\ref{approximated}) and prove the  propagation of chaos.

\subsection{Notations and main results}
Let us introduce the function spaces and notations
which we shall use throughout the paper.
\begin{itemize}
\item For
integer $N\geq 0$, we define the Sobolev space
\begin{equation*} H^N =\bigg\{f( v): \sum_{|\alpha | \leq N}\|
\partial _v^{\alpha }f\|_{L^2 }<+\infty\bigg\},
\end{equation*}
where the multi-index $\alpha =(\alpha _1,\alpha _2,\alpha _3)$ with
$|\alpha |=\alpha _1+\alpha _2+\alpha _3$ and $\partial _v^{\alpha } = \partial_{v_{1}}^{\alpha_{1}}\partial_{v_{2}}^{\alpha_{2}}\partial_{v_{3}}^{\alpha_{3}}$.
\item  For   real number $m, l$, we define the weighted Sobolev space
\begin{equation*}
H^{m}_l =\bigg\{f(v):  \|  \langle D \rangle^{m} \langle \cdot \rangle^{l}f
 \|_{L^2}<+\infty\bigg\},
\end{equation*}
where $\langle v \rangle \eqdefa (1+|v|^{2})^{\frac{1}{2}}$, and $a(D)$ is the pesudo-differential operator with symbol $a(\xi)$ defined by
\beno
(a(D)f)(v) \eqdefa \frac{1}{(2\pi)^{3}}\int_{\R^{3}}\int_{\R^{3}} e^{i(v-u)\cdot\xi}a(\xi)f(u)dud\xi.
\eeno
\item We also introduce the standard notations \beno
\|f\|_{L^p_q}=\big(\int_{\R^3} |f(v)|^p\langle v
\rangle^{q p}dv\big)^{\f1{p}},\quad \|f\|_{L\log L}=\int_{\R^3}
|f|\log (1+|f|)dv .\eeno
\item For the ease of notation, let us define a new norm $||\cdot||_{\epsilon,m,l}$ for any $\epsilon, l > 0$ and $m \in \N$ as:
\beno
||f||^{2}_{\epsilon,m,l} \eqdefa ||f||^{2}_{H^{m+s}_{l}} + \epsilon^{2-2s}||f||^{2}_{H^{m+1}_{l}},
\eeno
If $m=0$, we simply write $||\cdot||_{\epsilon,l}$ instead of $||\cdot||_{\epsilon,0,l}$. If $m=l=0$, we simply write $||\cdot||_{\epsilon}$ instead of $||\cdot||_{\epsilon,0}$. Then for any $\epsilon \leq 1,  ||\cdot||_{H^{m+s}_{l}}\leq||\cdot||_{\epsilon,m,l}\leq 2||\cdot||_{H^{m+1}_{l}}$.
\item Let us define the symbol $W^{\epsilon}(\xi)$ by
\beno
W^{\epsilon}(\xi) = \langle \xi \rangle^{s} \textbf{1}_{|\xi|\leq \frac{1}{\epsilon}} + \epsilon^{-s} \textbf{1}_{|\xi| > \frac{1}{\epsilon}},
\eeno
which comes from the coercivity estimate of the cut-off Boltzmann operator $Q^{\epsilon}$.
\item For any $f, g \in L^{2}(\R^{3})$, we denote by $\langle f, g\rangle$ the inner product of $f$ and $g$.
\item By $a\lesssim b$, we mean that there is a uniform constant $C,$ which may be different on different lines, such that $a\leq Cb$. We write $a\sim b$ if both $a\lesssim b$ and $b\lesssim a$.
\end{itemize}
We do not bother to distinguish a function and its value at a point. For example, we do not distinguish weight function $\langle \cdot \rangle^{l}$ and the value $\langle v \rangle^{l}$ it takes at a point $v$.

We recall Young's inequality for use in future. For $a, b \geq 0$ and $p, q > 1$, with $\frac{1}{p} + \frac{1}{q} = 1$, there holds
\begin{eqnarray}\label{youngineq}
a b \leq \frac{a^{p}}{p}  + \frac{b^{q}}{q}.
\end{eqnarray}
As a result, for any $\eta > 0$, we have the basic inequality
\begin{eqnarray}\label{basicineq}
a b \leq \eta a^{p} + (p\eta)^{-\frac{q}{p}}\frac{b^{q}}{q}.
\end{eqnarray}
We also recall the Gronwall's inequality. For any $a , b \in \R $, and a function $y$ defined on $\R_{+}$ satisfying
\begin{eqnarray}\label{assofgron}
\frac{dy}{dt} \leq a + b y(t), \nonumber
\end{eqnarray}
then
\begin{eqnarray}\label{gronwall}
y(t) \leq y(0)e^{bt} + \frac{a}{b}(e^{bt}-1).
\end{eqnarray}
There is also an integral type of Gronwall's inequality. Let $y, \alpha, \beta$  be functions defined on $\R_{+}$. If $\beta$ is nonnegative and for any $t>a\geq0$,  $y$ satisfies
\begin{eqnarray}\label{assofgron2}
y(t)\leq \alpha(t) + \int_{a}^{t}\beta(r)y(r)dr, \nonumber
\end{eqnarray}
then
\begin{eqnarray}\label{gronwall2}
y(t)\leq \alpha(t) + \int_{a}^{t}\alpha(r)\beta(r)\exp{(\int^{t}_{r}\beta(u)du)}dr.
\end{eqnarray}
If, in addition, the function $\alpha$ is non-decreasing, then
\begin{eqnarray}\label{gronwall3}
y(t)\leq \alpha(t)\exp{(\int_{a}^{t}\beta(r)dr)}.
\end{eqnarray}
\medskip

Before stating our main results, let us give the definition of $\phi$ which is related to the weight function:
\begin{equation}\label{functionphi} \left \{ \begin{aligned}
&\phi(0,l) = 2l+5;\\
&\phi(s,l) = \frac{(2l+4)(2+s)-2l}{s};\\
&\phi(1,l) = \max\{\phi(s,x(l)),y(l)\};\\
&\phi(m, l)= \max\{u(m,l),\phi(m-1,z(l)), ~~~~~~~~ m \geq 2,
\end{aligned} \right.
\end{equation}
where
\begin{equation}\label{functionxyzu}
\left \{
\begin{aligned}
&x(l) = \frac{2l+7}{s} - \frac{1-s}{s}(l+\frac{\gamma}{2});\\
&y(l) = \frac{3x(l) - (s+2)l}{1-s};\\
&z(l) = 2l+7+\frac{l+7}{s};\\
&u(m,l) = (m+2)z(l)-(m+1)l.
\end{aligned} \right.
\end{equation}
We begin with the first result concerns the propagation of the moments and smoothness for the solution to our approximate equation.
\begin{thm}\label{main2}  Let $\phi: \N \times \R_{+} \rightarrow \R$ be the function defined as in (\ref{functionphi}). Let $N \in \N$ and $l \geq 0$.  If $f_{0} \in L^{1}_{q} \cap H^{N}_{l}$ with $q \geq \phi(N,l)$,
then \eqref{approximated}  admits a non-negative and unique solution $f^\epsilon$ in $L^\infty([0,\infty]; L^{1}_{q} \cap H^{N}_{l})$ and morevover
there exists a constant $C$, depending only on $||f_{0}||_{L^{1}_{q}}$ and $||f_{0}||_{H^{N}_{l}}$, such that for any $t \geq 0$ and $\epsilon$ small enough,
\begin{eqnarray}
\label{uniforml1}
||f^\epsilon(t)||_{L^{1}_{q}}& \leq &C + ||f_{0}||_{L^{1}_{q}};\\
\label{uniformhn}
||f^\epsilon(t)||^{2}_{H^{N}_{l}} + \int^{t+1}_{t} ||f^\epsilon(r)||^{2}_{\epsilon,N,l+\gamma/2} dr & \leq &C(||f_{0}||_{L^{1}_{q}},||f_{0}||_{H^{N}_{l}}).
\end{eqnarray}
\end{thm}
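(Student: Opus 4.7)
The plan is to organize the proof into three tasks: a priori estimates, construction of a non-negative solution by linear iteration, and uniqueness. Throughout I would exploit the decomposition $M^\epsilon = Q^\epsilon + \epsilon^{2-2s} Q_L$, treating the two pieces separately since each has a well developed functional-analytic theory.

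First, I would derive the $L^1_q$ bound (\ref{uniforml1}) by multiplying (\ref{approximated}) by $\langle v\rangle^{2q}$ and integrating. For the $Q^\epsilon$ part, a Povzner-type inequality (available because $b^\epsilon$ is integrable) produces a negative gain that beats the loss from $|v-v_*|^\gamma$; for $\epsilon^{2-2s} Q_L$, the classical Landau moment estimate contributes a term bounded by energy and mass. A standard Gronwall step then yields (\ref{uniforml1}) uniformly in $\epsilon$. For the smoothness bound (\ref{uniformhn}), I would apply $\langle v\rangle^l \partial^\alpha$ with $|\alpha|\leq N$ and pair with $\langle v\rangle^l \partial^\alpha f$ in $L^2$. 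The coercivity of $Q^\epsilon$ yields a term like $\|W^\epsilon(D)(\langle v\rangle^{l+\gamma/2}\partial^\alpha f)\|_{L^2}^2$, while the Landau coercivity contributes $\epsilon^{2-2s}\|\nabla(\langle v\rangle^{l+\gamma/2}\partial^\alpha f)\|_{L^2}^2$; their sum reproduces precisely the norm $\|\cdot\|_{\epsilon,N,l+\gamma/2}^2$ on the left of (\ref{uniformhn}). The upper-bound terms come from the commutators $[\partial^\alpha, M^\epsilon]$ and $[\langle v\rangle^l, M^\epsilon]$; after Cauchy--Schwarz these reduce to norms $\|f\|_{\epsilon,m,l'}$ with either $m<N$ or $l'>l$. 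The whole role of the elaborate definitions (\ref{functionphi})--(\ref{functionxyzu}) is to fix, by induction on $N$, the minimal moment $q=\phi(N,l)$ that makes the interpolation between $L^1_q$ (from the moment step) and the coercive norm admissible; the recursion $\phi(m,l)\geq \phi(m-1,z(l))$ reflects exactly this induction on $N$.

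For existence, I would construct $\{f^n\}$ by the linear scheme $\partial_t f^{n+1} = M^\epsilon(f^n, f^{n+1})$ with $f^{n+1}|_{t=0}=f_0$, starting from $f^0=f_0$. Both $Q^\epsilon(g,\cdot)$ and $Q_L(g,\cdot)$ split as a non-negative gain minus a non-negative multiplicative loss when $g\geq 0$, so a standard maximum-principle argument propagates $f^n\geq 0$. Uniform-in-$n$ propagation of the $L^1_q$ and $H^N_l$ bounds is obtained exactly as in the a priori step above. Convergence follows by estimating $g^n = f^{n+1}-f^n$, which solves a bilinear equation in $f^n$ and $f^{n-1}$; a Gronwall inequality in $L^\infty_t L^2$ (with the uniform bounds controlling the high-derivative factors) makes $\{f^n\}$ Cauchy, and its limit $f^\epsilon$ inherits non-negativity and the a priori bounds. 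Uniqueness is then obtained by applying the very same stability estimate to the difference of two solutions with the same initial datum.

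The principal obstacle, as the authors flag in the introduction, is the weight increase of order $\gamma/2$ created at every iteration by the hard-potential factor $|v-v_*|^\gamma$. Naively the right-hand side of the differential inequality carries a norm with weight $l+\gamma/2$ on the full integration domain, which a priori cannot be absorbed into the weight-$l$ coercive term on the left. The remedy is a careful decomposition of the $v_*$-integration region: on the region where the weight increase really contributes, the prefactor is strictly less than one and the term can be absorbed into the $\|\cdot\|_{\epsilon,N,l+\gamma/2}$-dissipation on the left; on the complementary region, the extra weight is eliminated outright using the $L^1_{\phi(N,l)}$ moment. This twofold mechanism is exactly what allows the nested induction defining $\phi$ to close and why $\phi(N,l)$ can be chosen to grow only polynomially in $N$ and $l$.
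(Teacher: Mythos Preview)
Your overall architecture (a priori bounds, linear iteration, uniqueness) matches the paper's, and you correctly identify the coercivity of $M^\epsilon$ as the source of the $\|\cdot\|_{\epsilon,N,l+\gamma/2}$ dissipation. However, there are two genuine gaps and one misattribution.

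First, the claim that $Q_L(g,\cdot)$ ``splits as a non-negative gain minus a non-negative multiplicative loss'' is false: $Q_L$ is a second-order diffusion operator, and its zero-order piece $-(c*g)h$ has the wrong sign for a loss term. The paper's nonnegativity argument (Lemma~\ref{lemmapositive1}) instead tests against $f_-$ and uses the positive semi-definiteness of the matrix $a$ to get a good sign on the Landau diffusion; the bad zero-order piece is then absorbed into the Boltzmann loss $\mathcal L(g)(f_-)^2$, which is why $\epsilon$ must be small. A plain gain/loss maximum principle does not apply here.

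Second, and more seriously, your induction on $N$ is missing a level. Going from the $L^2_l$ bound directly to $H^1_l$ by applying $\partial^\alpha$ does not close: the commutator and cross terms produce factors like $\|f\|_{H^1_{2l+7}}^2$ multiplied by $\|f\|_{\epsilon,l+3}^2$, and the only way to absorb the high-weight factor into the $H^{1+s}_{l+\gamma/2}$ dissipation is to interpolate against an $H^s$ norm already known to be bounded pointwise in time. The paper therefore inserts an explicit $H^s_l$ propagation step (Step~3 of the proof), carried out with fractional difference quotients $\triangle_s f$ and the equivalence $\|g\|_{H^s}^2\sim \int_{|h|\leq 1/2}\|\triangle_s g\|_{L^2}^2\,dh + \|g\|_{L^2}^2$. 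This is precisely why the definition of $\phi$ in (\ref{functionphi}) has a separate level $\phi(s,l)$ sitting between $\phi(0,l)$ and $\phi(1,l)$, and why $\phi(1,l)$ is built from $\phi(s,x(l))$. Your proposal does not account for this.

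Finally, the mechanism you describe in the last paragraph---decomposing the $v_*$-integration region and exploiting a prefactor strictly less than one---is not how the a priori $H^N_l$ bound of Theorem~\ref{main2} is closed. Those devices appear elsewhere: the strict-contraction coefficient is used in the \emph{iteration} proofs (Lemmas~\ref{lemmapositive2} and \ref{lemmapositive3}) to show the approximating sequence is Cauchy despite weight increase at each step, and the angular splitting $\theta\lessgtr |v-v_*|^{-\alpha}$ appears only in the proof of Theorem~\ref{main3} for the error function $F^\epsilon_R$. In the a priori estimates of Theorem~\ref{main2}, the weight increase is handled purely by interpolation inequalities of the form $\|f\|_{H^m_{l'}}^2\leq \eta\|f\|_{H^{m+s}_{l+\gamma/2}}^2 + C_\eta\|f\|_{H^{m-1}_{z(l)}}^2$ (see (\ref{hmhmsmminus1})), which is the actual origin of the recursion $\phi(m,l)\geq \phi(m-1,z(l))$.
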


\begin{rmk}\label{alsotrue}
The result of Theorem  \ref{main2} is also true when $\epsilon=0$, which corresponds to the propagation of moments and smoothness of solution of the original Boltzmann equation (\ref{homb}).
\end{rmk}

The last two theorems describe the error between solutions of the Boltzmann equation and our approximate equation.
\begin{thm}\label{main3}
Let $ l\geq 0$ such that $ (\frac{4}{\pi})^{2l-2s}(l-s) \geq \frac{2^{4-2s} \pi K}{A_{2}}$ and $2l \geq \frac{s}{1-s}(\gamma+2)+\gamma$. Suppose $f_{0} \in L^{1}_{q} \cap H^{5}_{2l+\gamma+12}$ with $q\geq\phi(5,2l+\gamma+12)$.
Let $f$ and $f^{\epsilon}$ be solutions to the Boltzmann equation \eqref{homb} and the approximated equation \eqref{approximated} with the same initial datum $f_{0}$ respectively, then we have for any $t \geq 0$,
\begin{eqnarray}\label{errorl1}
||f(t)-f^{\epsilon}(t)||_{L^{1}_{2l}}\leq C(f_{0},t)\epsilon^{3-2s},
\end{eqnarray}
where $C(f_{0},t)$ is a constant depending only on $||f_{0}||_{L^{1}_{q}}, ||f_{0}||_{H^{5}_{2l+\gamma+12}}$ and time $t$.
\end{thm}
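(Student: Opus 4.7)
The natural error object is $F^\epsilon \eqdefa f - f^\epsilon$, which vanishes at $t=0$. Using bilinearity together with $Q = Q^\epsilon + Q_\epsilon$ and $M^\epsilon = Q^\epsilon + \epsilon^{2-2s} Q_L$, one directly checks that
\beno
\partial_t F^\epsilon = M^\epsilon(f, F^\epsilon) + M^\epsilon(F^\epsilon, f^\epsilon) + R^\epsilon, \quad R^\epsilon \eqdefa Q_\epsilon(f,f) - \epsilon^{2-2s} Q_L(f,f).
\eeno
The key point of this rewriting, as emphasized in the introduction, is that although $F^\epsilon$ itself carries no sign information, each bilinear contribution contains a \emph{good} argument ($f$ or $f^\epsilon$) whose weighted Sobolev norms are propagated uniformly in $t$ by Theorem \ref{main2} together with Remark \ref{alsotrue} (applied with $N=5$ and weight $2l+\gamma+12$, which is exactly the regularity hypothesis made on $f_0$).

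The plan is then to test this equation against $\mathrm{sgn}(F^\epsilon)\langle v\rangle^{2l}$ so as to close a differential inequality for $y(t) \eqdefa \|F^\epsilon(t)\|_{L^1_{2l}}$. The remainder $R^\epsilon$ is handled by the weighted analogue of \eqref{appqwithql}, namely
\beno
\|R^\epsilon\|_{L^1_{2l}} \lesssim \epsilon^{3-2s}\|f\|^2_{H^5_{2l+\gamma+12}},
\eeno
whose right-hand side is controlled uniformly in $t$ by $C(f_0,t)\epsilon^{3-2s}$ through Theorem \ref{main2}. For the term $M^\epsilon(f, F^\epsilon)$, I would use the Carleman/weak form of $Q^\epsilon$ together with integration by parts for $Q_L$: the loss part of $Q^\epsilon$ contributes a nonnegative multiple of a collision frequency times $|F^\epsilon|\langle v\rangle^{2l}$, while the gain part, after bounding $F^\epsilon \mathrm{sgn}(F'^\epsilon)$ by $|F^\epsilon|$, reduces to an angular integral of $\langle v'\rangle^{2l}-\langle v\rangle^{2l}$ against $b^\epsilon$. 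The quantitative condition $(4/\pi)^{2l-2s}(l-s) \geq 2^{4-2s}\pi K/A_2$ is precisely the Povzner-type threshold that forces this angular integral to be dominated by the loss term, so that on balance $M^\epsilon(f, F^\epsilon)$ contributes only an absorbable $C(t)\,y(t)$ plus a nonnegative coercive term.

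The hard part will be $M^\epsilon(F^\epsilon, f^\epsilon)$, because a naive weighted estimate produces $\|F^\epsilon\|_{L^1_{2l+\gamma}}$ on the right-hand side, which cannot be bounded by $y(t)$. To get around this, I would split the integration in $(v,v_*)$ according to whether $|v| \leq |v_*|$ or $|v| > |v_*|$. On $\{|v|\leq |v_*|\}$, the excess weight $\langle v\rangle^{\gamma}$ is absorbed into $\langle v_*\rangle^{\gamma}$ and thus transferred onto $f^\epsilon$, whose high $L^1$ moments are given by Theorem \ref{main2}. On $\{|v|>|v_*|\}$ the excess $\langle v\rangle^{\gamma}$ is compensated by interpolating between the coercivity furnished by $Q^\epsilon$ (a power $s$, up to scale $\epsilon^{-1}$ through the symbol $W^\epsilon$) and that furnished by $\epsilon^{2-2s}Q_L$ (a full power $1$); the quantitative condition $2l \geq \frac{s}{1-s}(\gamma+2) + \gamma$ is exactly what is needed for this interpolation to absorb $\langle v\rangle^{\gamma}$ into the dissipation, at the cost of higher moments of $f^\epsilon$. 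Combining these bounds yields an inequality of the form $y'(t) \leq C_1(t)\,y(t) + C_2(t)\,\epsilon^{3-2s}$ with $C_1, C_2$ time-locally bounded through Theorem \ref{main2}; since $y(0)=0$, Gronwall's inequality \eqref{gronwall} delivers \eqref{errorl1}.
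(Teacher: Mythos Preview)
Your overall plan (test against $\mathrm{sgn}(F^\epsilon)\langle v\rangle^{2l}$, close by Gronwall) matches the paper, but you have reversed the roles of the two quantitative hypotheses and proposed a mechanism for the hard term that does not work in $L^1$. The term with $F^\epsilon$ in the \emph{second} slot is the coercive one: after the bound $F^\epsilon\,\mathrm{sgn}(F^\epsilon')\le|F^\epsilon|$ one is reduced to $\int b\,|v-v_*|^\gamma f_*|F^\epsilon|(\langle v'\rangle^{2l}-\langle v\rangle^{2l})$, and symmetrizing via Proposition~\ref{propsition1} yields $-\tfrac14 A_2\|f\|_{L^1}\|F^\epsilon\|_{L^1_{2l+\gamma}}$ plus lower order, with no Povzner threshold needed. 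Both hypotheses are in fact used for the term with $F^\epsilon$ in the \emph{first} slot. Your proposed treatment of that term---splitting by $|v|\lessgtr|v_*|$ and ``interpolating'' the $H^s$/$H^1$ coercivities encoded in $W^\epsilon$---confuses the $L^2$ and $L^1$ toolboxes: in a weighted $L^1$ moment estimate there is no fractional-regularity dissipation to interpolate, only the moment gain $-c\|F^\epsilon\|_{L^1_{2l+\gamma}}$ just mentioned.

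The paper (which writes the error equation with the full $Q$ rather than $M^\epsilon$, putting the remainder on $f^\epsilon$) instead splits $\langle Q(F^\epsilon_R,f),\mathrm{sgn}(F^\epsilon_R)\langle v\rangle^{2l}\rangle$ at the \emph{angular} threshold $\theta=|v-v_*|^{-\alpha}$ with $\alpha=\tfrac{\gamma+2}{2-2s}$. On $\{\theta\le|v-v_*|^{-\alpha}\}$ one rewrites $f\,\mathrm{sgn}(F'^\epsilon_R)\langle v'\rangle^{2l}-f\,\mathrm{sgn}(F^\epsilon_R)\langle v\rangle^{2l}$ as a cancellation-lemma piece plus $(f-f')\mathrm{sgn}(F'^\epsilon_R)\langle v'\rangle^{2l}$; a second-order Taylor expansion of $f$ gains $\sin^2\tfrac\theta2$, and $\int_0^{|v-v_*|^{-\alpha}}\theta^{1-2s}d\theta\sim|v-v_*|^{-(\gamma+2)}$ exactly cancels the kinetic factor $|v-v_*|^{\gamma+2}$. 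On $\{\theta\ge|v-v_*|^{-\alpha}\}$ one bounds crudely by $\langle v'\rangle^{2l}+\langle v\rangle^{2l}$: the $2\langle v\rangle^{2l}$ piece produces $|v-v_*|^{2\alpha s+\gamma}$ after the angular integral, and the hypothesis $2l\ge\tfrac{s}{1-s}(\gamma+2)+\gamma$ is exactly $2\alpha s+\gamma\le 2l$, so this weight can be shared between $f$ and $F^\epsilon_R$; the $\langle v'\rangle^{2l}-\langle v\rangle^{2l}$ piece, expanded via Proposition~\ref{lemma1}, contains a $\langle v_*\rangle^{2l}\sin^{2l}\tfrac\theta2$ term that puts the full weight $2l+\gamma$ back on $F^\epsilon_R$, and the Povzner hypothesis is precisely what forces its coefficient $\int b\sin^{2l}\tfrac\theta2\,d\sigma$ to be $\le\tfrac{A_2}{8}$, absorbable by the coercive term.
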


Let us introduce the definition of $\psi$:
\begin{equation}\label{functionpsi} \left \{ \begin{aligned}
&\psi(0,l) = 2l+\gamma+17;\\
&\psi(m, l)= l+\gamma+10, ~~~~~~~~ m \geq 1.
\end{aligned} \right.
\end{equation}
and $\varphi$:
\begin{equation}\label{functionvarphi}
\left \{
\begin{aligned}
&\varphi(0,l) = \phi(5,2l+\gamma+17);\\
&\varphi(m,l) = \max\{\varphi(m-1,z(l)), \rho(m,l)\},~~~~ m \geq 1.
\end{aligned} \right.
\end{equation}
Then we have:
\begin{thm}\label{main4}  Let $N \in \N$ and $l \geq 0$ such that $ (\frac{4}{\pi})^{2l+5-2s}(2l+5-2s) \geq \frac{2^{5-2s} \pi K}{A_{2}}$ and $2l + 5 \geq \frac{s}{1-s}(\gamma+2)+\gamma$. Let $\psi, \varphi: \N \times \R_{+} \rightarrow \R$ be functions defined as in (\ref{functionpsi}) and (\ref{functionvarphi}). Suppose $f_{0} \in L^{1}_{q} \cap H^{N+5}_{\psi(N,l)}$ with $q \geq \varphi(N,l)$.
Let $f$ and $f^{\epsilon}$ be solutions to the Boltzmann equation \eqref{homb} and the approximated equation \eqref{approximated} with the same initial datum $f_{0}$ respectively, then we have for any $t \geq 0$,
\begin{eqnarray}\label{errorl2}
||f(t)-f^{\epsilon}(t)||_{H^{N}_{l}} \leq C(f_{0},t)\epsilon^{3-2s},
\end{eqnarray}
where $C(f_{0},t)$ is a constant depending only on $||f_{0}||_{L^{1}_{q}}, ||f_{0}||_{H^{N+5}_{\psi(N,l)}}$ and time $t$.
\end{thm}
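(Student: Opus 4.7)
The plan is to write an equation for the error $F^\epsilon := f - f^\epsilon$ and carry out weighted $H^N$ energy estimates. Using that $f$ solves \eqref{homb} and $f^\epsilon$ solves \eqref{approximated}, the error satisfies
\beno
\partial_t F^\epsilon = R^\epsilon(f) + M^\epsilon(f^\epsilon, F^\epsilon) + M^\epsilon(F^\epsilon, f),
\eeno
with consistency residual $R^\epsilon(g) := Q(g,g) - M^\epsilon(g,g) = Q_\epsilon(g,g) - \epsilon^{2-2s}Q_L(g,g)$. By \eqref{appqwithql} we have $\|R^\epsilon(g)\|_{L^1}\lesssim\epsilon^{3-2s}\|g\|^2_{H^5_{\gamma+12}}$; the first preparatory step is to upgrade this to
\beno
\|R^\epsilon(f)\|_{H^N_l}\lesssim \epsilon^{3-2s}\|f\|^2_{H^{N+5}_{\psi(N,l)}},
\eeno
which follows by redoing the Taylor expansion behind \eqref{appqwithql} after applying $\partial_v^\alpha$ for $|\alpha|\leq N$, distributing derivatives via Leibniz, and absorbing the extra weights, thus explaining the function $\psi$.

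The main estimate is then an induction on $N$. The base case $N=0$ is obtained by combining Theorem \ref{main3}, which controls $\|F^\epsilon\|_{L^1_{2l}}$ to order $\epsilon^{3-2s}$, with the uniform bounds of Theorem \ref{main2} applied at higher index, then interpolating to recover the $L^2_l$ norm: the extra $5$ units of weight in $\psi(0,l)=2l+\gamma+17$ compared to $2l+\gamma+12$ in Theorem \ref{main3} are precisely the cost of this $L^1$--$L^\infty$ interpolation. For the inductive step, apply $\partial_v^\alpha$ with $|\alpha|\leq N$ to the error equation, pair with $\langle v\rangle^{2l}\partial_v^\alpha F^\epsilon$ in $L^2$, and sum. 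The consistency term $\langle \partial_v^\alpha R^\epsilon(f), \langle v\rangle^{2l}\partial_v^\alpha F^\epsilon\rangle$ is bounded by $\eta\|F^\epsilon\|_{H^N_l}^2+\eta^{-1}C(f_0,t)\epsilon^{6-4s}$ via \eqref{basicineq} and the preparatory bound. The contribution of $M^\epsilon(f^\epsilon, F^\epsilon)$ produces, from the positivity of $f^\epsilon$ together with the $W^\epsilon$--coercivity of the cutoff operator and Landau dissipation, a positive gain controlling $c\|F^\epsilon\|_{\epsilon,N,l+\gamma/2}^2$ modulo commutators. Finally $M^\epsilon(F^\epsilon, f)$ yields a trilinear piece bounded by $C\|F^\epsilon\|_{H^N_l}\|F^\epsilon\|_{\epsilon,N,l+\gamma/2}\|f\|_{H^{N+s}_{\psi(N,l)}}$, whose middle factor is absorbed into the coercive gain.

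The principal difficulty, as flagged in the introduction, is the \emph{weight-increase}: the commutator $[\langle v\rangle^l, M^\epsilon(f^\epsilon,\cdot)]$ carries weight $l+\gamma/2$ instead of $l$ and cannot be re-absorbed into $\|F^\epsilon\|_{H^N_l}^2$ directly. The plan for this is to split the $(v,v_*,\sigma)$ integration into three regions according to the relative size of $|v|$ and $|v_*|$: on $\{|v_*|\leq|v|/2\}$ exploit $\langle v'\rangle\leq\sqrt{2}\langle v\rangle$ together with the extra $\theta^2$ from Taylor's formula to cancel the weight jump; on $\{|v_*|\geq 2|v|\}$ transfer the weight $\langle v\rangle^{l+\gamma/2}$ onto $f^\epsilon$, which is paid for by the $L^1_q$ assumption with $q=\varphi(N,l)$; and on the intermediate region invoke the arithmetic hypothesis $(4/\pi)^{2l+5-2s}(2l+5-2s)\geq 2^{5-2s}\pi K/A_2$, which is precisely the smallness condition guaranteeing that the coercive gain beats the remaining weight-increased piece, while $2l+5\geq s(\gamma+2)/(1-s)+\gamma$ is used to interpolate between the two norms forming $\|\cdot\|_{\epsilon,N,l+\gamma/2}$ in the Landau part.

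Collecting all contributions yields a differential inequality of the form
\beno
\frac{d}{dt}\|F^\epsilon(t)\|_{H^N_l}^2 + c\|F^\epsilon(t)\|_{\epsilon,N,l+\gamma/2}^2 \leq C(f_0,t)\epsilon^{6-4s} + C(f_0,t)\|F^\epsilon(t)\|_{H^N_l}^2,
\eeno
and since $F^\epsilon(0)=0$, Gronwall's inequality \eqref{gronwall} gives $\|F^\epsilon(t)\|_{H^N_l}^2\leq C(f_0,t)\epsilon^{6-4s}$, which is \eqref{errorl2}.
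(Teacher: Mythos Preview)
Your general framework---writing the error equation, carrying out weighted $H^N$ energy estimates, exploiting the coercivity of $M^\epsilon(f^\epsilon,\cdot)$, and closing by Gronwall---matches the paper. But two steps in your plan are genuine gaps.

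\textbf{The base case $N=0$.} You propose to obtain $\|F^\epsilon\|_{L^2_l}$ by combining the $L^1_{2l}$ bound of Theorem~\ref{main3} with the uniform $H$--bounds of Theorem~\ref{main2} ``then interpolating''. A direct $L^1$--$L^\infty$ (or $L^1$--$H^k$) interpolation loses half the order: $\|F^\epsilon\|_{L^2}^2\leq \|F^\epsilon\|_{L^1}\|F^\epsilon\|_{L^\infty}$ gives only $O(\epsilon^{3-2s})$ for the square, hence $O(\epsilon^{(3-2s)/2})$ for $\|F^\epsilon\|_{L^2_l}$. The paper does not interpolate the final estimate; instead it runs the $L^2_l$ energy estimate itself and uses the $L^1$ bound of Theorem~\ref{main3} \emph{inside} that estimate. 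The key device is the elementary inequality \eqref{l2hsl1},
\[
\|F\|_{L^2}^2 \leq \lambda \|F\|_{H^s}^2 + C\lambda^{-3/(2s)}\|F\|_{L^1}^2,
\]
which turns the weight-increased lower-order term $\|F^\epsilon_R\|_{L^2_{l+\gamma/2}}^2$ into a piece absorbed by the coercive gain $\|F^\epsilon_R\|_{\epsilon,l+\gamma/2}^2$ plus an $L^1$ piece that Theorem~\ref{main3} already controls to the correct order. That is why one needs $\|F^\epsilon_R\|_{L^1_{2l+5}}$ (hence $\psi(0,l)=2l+\gamma+17$): the extra $5$ units of weight come from applying Theorem~\ref{main3} at index $2l+5$, not from an $L^1$--$L^\infty$ interpolation.

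\textbf{The weight increase for $N\geq 1$.} Your plan to handle the commutator $[\langle v\rangle^l, M^\epsilon(f^\epsilon,\cdot)]$ by splitting $(v,v_*,\sigma)$ into three regions and invoking the arithmetic hypotheses on $l$ conflates the mechanism of Theorem~\ref{main3} with that of Theorem~\ref{main4}. The region-splitting (via $\theta\lessgtr |v-v_*|^{-\alpha}$) and the conditions $(4/\pi)^{2l+5-2s}(2l+5-2s)\geq 2^{5-2s}\pi K/A_2$ and $2l+5\geq \frac{s}{1-s}(\gamma+2)+\gamma$ are used \emph{only} in the $L^1$ estimate of Theorem~\ref{main3}; they appear in the hypotheses of Theorem~\ref{main4} solely because Theorem~\ref{main4} invokes Theorem~\ref{main3}. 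In the $H^m_l$ energy estimate the paper never splits regions. Instead it trades regularity for weight via the interpolation
\[
\|F^\epsilon_R\|_{H^m_{2l+7}}^2 \leq \eta\,\|F^\epsilon_R\|_{H^{m+s}_{l+\gamma/2}}^2 + C_\eta\,\|F^\epsilon_R\|_{H^{m-1}_{z(l)}}^2,
\]
absorbs the first term into the coercive gain, and bounds the second by the induction hypothesis at level $m-1$ (with the higher weight $z(l)$, which is exactly the origin of the recursion defining $\varphi$). Your proposed region-splitting does not obviously produce a closed inequality in $\|F^\epsilon\|_{H^N_l}$: the problematic terms are quadratic in $F^\epsilon_R$ and the arithmetic hypotheses give no leverage on them at the $H^m$ level.

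In short, the architecture is right, but the two closure steps---how the base case actually closes, and how the weight jump is absorbed in the inductive step---are misidentified. Replace the $L^1$--$L^\infty$ interpolation by the $L^2\to H^s\oplus L^1$ splitting \eqref{l2hsl1} feeding on Theorem~\ref{main3}, and replace the region-splitting by the regularity-for-weight interpolation feeding on the induction hypothesis.
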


\subsection{Plan of the paper} In section 2, we state three estimates (upper bound, coercivity, commutator) of the operator $M^{\epsilon}$. Section 3 is devoted to the well-posedness theory  of our approximate equation, namely, uniqueness and existence of non-negative solution. In the last section, we prove the high order convergence of solutions between the Boltzmann equation and our approximate equation.

\section{Estimates of the collision operators}
In this section, we state three estimates of the operator $M^{\epsilon}$, as defined in (\ref{approximatedoperator}) which will used frequently in next sections. We begin with upper bound of the collision operator.

\begin{thm}\label{upcb}
Suppose the collision kernel $B$ satisfies the Assumption (A-1)-(A-4), and $Q^{\epsilon}$ is the collision operator associated to the collision kernel $B^{\epsilon}$. Let $w_{1}, w_{2} \in \R$ with $w_{1}+w_{2} \geq \gamma + 2$, $a_{1}, a_{2} \geq 0$ with $a_{1}+a_{2} = 2s$ and $b_{1}, b_{2} \geq 0$ with $b_{1}+b_{2} = 2$. Then for smooth functions $g, h$ and $f$, the following
estimate holds uniformly with respect to $\epsilon$:
\begin{eqnarray}\label{upcboltz}
\quad \quad \quad   |\langle M^{\epsilon}(g,h), f \rangle| \lesssim ||g||_{L^{1}_{\gamma+2+(-w_{1})^{+}+(-w_{2})^{+}}} (||h||_{H^{a_{1}}_{w_{1}}} ||f||_{H^{a_{2}}_{w_{2}}}+ \epsilon^{2-2s}||h||_{H^{b_{1}}_{w_{1}}} ||f||_{H^{b_{2}}_{w_{2}}}).
\end{eqnarray}
\end{thm}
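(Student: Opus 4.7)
The plan is to exploit the linearity $M^\epsilon = Q^\epsilon + \epsilon^{2-2s}Q_L$ and prove two separate bounds: a uniform-in-$\epsilon$ estimate for $Q^\epsilon$ matching the first summand on the right-hand side of \eqref{upcboltz} (with regularity split $a_1+a_2 = 2s$), and an $\epsilon$-independent estimate for $Q_L$ matching the second (with regularity split $b_1+b_2 = 2$). Adding these with the prefactor $\epsilon^{2-2s}$ in front of the Landau contribution yields \eqref{upcboltz}. In both estimates, the weight $\langle v\rangle^{\gamma+2+(-w_1)^++(-w_2)^+}$ on $g$ will be produced by combining the growth of the collision kernels (namely $|v-v_*|^\gamma$ for $Q^\epsilon$ and $|v-v_*|^{\gamma+2}$ for $Q_L$) with a Peetre-type redistribution of $\langle v\rangle^{w_1+w_2}$ that pushes any negative parts of the $w_i$'s onto $g$.

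For the Landau piece, I would rewrite
\beno
Q_L(g,h) = \sum_{i,j}(a_{ij}*g)\partial_{ij}^2 h - (c*g)h,
\eeno
where $a_{ij}(v) = \Lambda|v|^{\gamma+2}(\delta_{ij} - v_iv_j/|v|^2)$ and $c$ is a linear combination of second derivatives of $a_{ij}$, integrate by parts once in $v$ against $f$ to move one derivative from $h$ to $f$, and then use $|v-v_*|^{\gamma+2}\lesssim \langle v\rangle^{\gamma+2}\langle v_*\rangle^{\gamma+2}$ inside the convolutions. Cauchy--Schwarz in $v$ between the derivatives of $h$ and $f$, combined with the Peetre-type weight distribution, then produces the desired bound with $b_1+b_2 = 2$; the case $b_i=0$ covers the lower-order term $(c*g)h$, and the case $b_1=b_2=1$ covers the second-order main part.

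For the cutoff Boltzmann piece, I would employ the classical pre-post collisional symmetrization together with the decomposition $g_*'h' - g_*h = g_*'(h'-h) + h(g_*'-g_*)$. The second piece is handled via the cancellation lemma, which eliminates the angular singularity with no derivative cost and absorbs an extra $|v-v_*|^\gamma$ into $g$. The first piece is bounded by standard Fourier/Littlewood--Paley arguments in which the regularity index $2s$ is split as $a_1+a_2$ between $h$ and $f$; the key observation is that the relevant angular integral $\int_{\SS^2}b(\cos\theta)\sin^{2\alpha}(\theta/2)d\sigma$ is finite whenever $\alpha > s$ by assumption (A-2), so in particular for $\alpha = a_1+a_2 = 2s$, and since $b^\epsilon \leq b$ the resulting constant is automatically uniform in $\epsilon$.

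The main obstacle will be preserving this $\epsilon$-uniformity throughout: the naive bound $\|b^\epsilon\|_{L^1(\SS^2)}\lesssim \epsilon^{-2s}$ blows up as $\epsilon\to 0$ and must be carefully avoided by always invoking the cancellation in $g_*'h'-g_*h$ rather than splitting the two summands by the triangle inequality. A secondary, more routine bookkeeping issue is tracking the negative parts $(-w_i)^+$: they arise because $\langle v'\rangle^{w_i}\lesssim \langle v\rangle^{w_i}$ fails when $w_i<0$, in which case one writes instead $\langle v\rangle^{w_i}\lesssim \langle v'\rangle^{w_i}\langle v-v_*\rangle^{|w_i|}$ and absorbs the surplus $\langle v-v_*\rangle^{|w_i|}\lesssim \langle v\rangle^{|w_i|}\langle v_*\rangle^{|w_i|}$ into $g$, which accounts precisely for the exponent $\gamma+2+(-w_1)^++(-w_2)^+$ appearing in the statement.
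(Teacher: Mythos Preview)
Your proposal is correct and follows the same decomposition $M^\epsilon = Q^\epsilon + \epsilon^{2-2s}Q_L$ that the paper uses; the paper simply cites \cite{he2} for each of the two estimates \eqref{upcboltz1} and \eqref{uplandau} rather than sketching the Littlewood--Paley / cancellation-lemma and integration-by-parts arguments you describe, which are indeed the standard ingredients behind those cited bounds. Your remark that $b^\epsilon \le b$ is exactly what guarantees the constants are uniform in $\epsilon$, and the weight bookkeeping via Peetre's inequality is the correct mechanism producing the exponent $\gamma+2+(-w_1)^++(-w_2)^+$.
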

\begin{proof}
For the cut-off Boltzmann operator $Q^{\epsilon}$,  as in \cite{he2}, for any $w_{1}, w_{2} \in \R$ with $w_{1}+w_{2} \geq \gamma + 2$, there holds
\begin{eqnarray}\label{upcboltz1}
|\langle Q^{\epsilon}(g,h), f \rangle| \lesssim ||g||_{L^{1}_{\gamma+2s+(-w_{1})^{+}+(-w_{2})^{+}}} ||h||_{H^{a_{1}}_{w_{1}}} ||f||_{H^{a_{2}}_{w_{2}}}.
\end{eqnarray}
Again from \cite{he2}, we have
\begin{eqnarray}\label{uplandau}
|\langle Q_{L}(g,h), f \rangle| \lesssim ||g||_{L^{1}_{\gamma+2+(-w_{1})^{+}+(-w_{2})^{+}}} ||h||_{H^{b_{1}}_{w_{1}}} ||f||_{H^{b_{2}}_{w_{2}}}.
\end{eqnarray}
Patching together the above two estimates, the estimate (\ref{upcboltz}) follows accordingly.
\end{proof}

We now turn to coercivity estimate of the operator.
\begin{thm}\label{coercb}
Suppose the collision kernel $B$ satisfies the Assumption (A-1)-(A-4), and $Q^{\epsilon}$ is the collision operator associated to the collision kernel $B^{\epsilon}$. Suppose function $g$ is nonnegative and satisfies
\begin{eqnarray}\label{assong}
||g||_{L^{1}_{2}} + ||g||_{LlogL} < \infty,
\end{eqnarray}
then there exists constants $C_{1}(g)$ and $C_{2}(g)$ depending only on $||g||_{L^{1}_{1}}$ and $||g||_{LlogL}$ such that
\begin{eqnarray}\label{coercivityboltz}
-\langle M^{\epsilon}(g,f), f \rangle \geq C_{1}(g)||f||^{2}_{\epsilon,\gamma/2} - C_{2}(g)||f||^{2}_{L^{2}_{\gamma/2}}.
\end{eqnarray}
\end{thm}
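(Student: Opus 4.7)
The strategy is to treat $Q^\epsilon$ and $\epsilon^{2-2s}Q_L$ separately, using a standard sub-elliptic coercivity for each, and then patch the resulting lower bounds together through a frequency decomposition built around the symbol $W^\epsilon$ introduced in the notation section.

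For the Landau piece, since $a(v-v_*)=\Lambda|v-v_*|^{\gamma+2}(I-(v-v_*)\otimes(v-v_*)/|v-v_*|^2)$ has a uniform positive spectral gap in the two directions orthogonal to $v-v_*$, the classical Desvillettes--Villani type coercivity gives, under the hypothesis $\|g\|_{L^1_2}+\|g\|_{L\log L}<\infty$,
\begin{equation*}
-\langle Q_L(g,f),f\rangle \geq \kappa_1(g)\|f\|^2_{H^1_{\gamma/2}} - \kappa_2(g)\|f\|^2_{L^2_{\gamma/2}},
\end{equation*}
with $\kappa_1(g),\kappa_2(g)$ depending only on $\|g\|_{L^1_1}$ and $\|g\|_{L\log L}$; this is the weighted version of the estimate recalled in \cite{he2}.

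For $Q^\epsilon$, the angular cutoff $b^\epsilon=b\mathbf{1}_{\sin(\theta/2)\geq\epsilon}$ suppresses only the grazing collisions, so $-\langle Q^\epsilon(g,\cdot),\cdot\rangle$ retains the fractional Sobolev gain of the full non-cutoff Boltzmann operator up to frequency $|\xi|\sim 1/\epsilon$, which is exactly what $W^\epsilon$ captures. The appropriate lower bound is
\begin{equation*}
-\langle Q^\epsilon(g,f),f\rangle \geq \kappa_3(g)\|W^\epsilon(D)(\langle\cdot\rangle^{\gamma/2}f)\|^2_{L^2} - \kappa_4(g)\|f\|^2_{L^2_{\gamma/2}},
\end{equation*}
again with constants depending only on $\|g\|_{L^1_1}$ and $\|g\|_{L\log L}$. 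This can be obtained by adapting the proof of the fractional coercivity for the non-cutoff operator (in the spirit of Alexandre--Desvillettes--Villani--Wennberg) to the cut-off kernel, inserting the weight $\langle v\rangle^{\gamma/2}$, and absorbing the resulting commutators into $\|f\|^2_{L^2_{\gamma/2}}$.

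To combine the two pieces, I use an elementary frequency decomposition: setting $h:=\langle\cdot\rangle^{\gamma/2}f$ and using that $\langle\xi\rangle^{2s-2}\leq\epsilon^{2-2s}$ on $\{|\xi|>1/\epsilon\}$, Plancherel gives
\begin{equation*}
\|f\|^2_{H^s_{\gamma/2}} = \|\langle D\rangle^{s}h\|^2_{L^2} \leq \|W^\epsilon(D)h\|^2_{L^2} + \epsilon^{2-2s}\|f\|^2_{H^1_{\gamma/2}}.
\end{equation*}
Adding the Landau coercivity weighted by $\epsilon^{2-2s}$ to the $Q^\epsilon$ coercivity and feeding in this decomposition yields a lower bound by $\|f\|^2_{H^s_{\gamma/2}}+\epsilon^{2-2s}\|f\|^2_{H^1_{\gamma/2}}=\|f\|^2_{\epsilon,\gamma/2}$, up to a $C_2(g)\|f\|^2_{L^2_{\gamma/2}}$ remainder. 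I expect the main technical obstacle to be the $Q^\epsilon$ bound: one has to make sure that the fractional coercivity constant is independent of $\epsilon$ and depends only on $\|g\|_{L^1_1}$ and $\|g\|_{L\log L}$, which requires a careful treatment of the angular integration near the cutoff threshold and of the weight commutator $[\langle\cdot\rangle^{\gamma/2},Q^\epsilon(g,\cdot)]$.
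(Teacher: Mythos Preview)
Your proposal is correct and follows essentially the same approach as the paper: separate coercivity for $Q^\epsilon$ (via the ADVW-type estimate yielding the $W^\epsilon(D)$ gain) and for $Q_L$ (via the Desvillettes--Villani estimate yielding the $H^1_{\gamma/2}$ gain), then combine them using the frequency splitting at $|\xi|\sim 1/\epsilon$, which is exactly the equivalence $\|W^\epsilon(D)f\|^2_{L^2_{\gamma/2}}+\epsilon^{2-2s}\|f\|^2_{H^1_{\gamma/2}}\sim\|f\|^2_{\epsilon,\gamma/2}$ that the paper invokes. The only cosmetic difference is that the paper writes the weight outside the pseudodifferential operator, $\|W^\epsilon(D)f\|_{L^2_{\gamma/2}}$, whereas you write $\|W^\epsilon(D)(\langle\cdot\rangle^{\gamma/2}f)\|_{L^2}$; these differ by a lower-order commutator absorbed into the $\|f\|^2_{L^2_{\gamma/2}}$ remainder, which you already flagged.
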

\begin{proof}
For the cut-off Boltzmann operator $Q^{\epsilon}$,  with a similar argument as in \cite{advw}, one has
\beno
-\langle Q^{\epsilon}(g,f), f \rangle \geq C_{1}(g)||W^{\epsilon}(D)f||^{2}_{L^{2}_{\gamma/2}} - C_{2}(g)||f||^{2}_{L^{2}_{\gamma/2}}.
\eeno
For the Landau operator $Q_{L}$, by \cite{dv1}, there holds
\begin{eqnarray}\label{coerlandau}
-\langle Q_{L}(g,f), f \rangle_{v} \geq C_{1}(g)||f||^{2}_{H^{1}_{\gamma/2}} - C_{2}(g)||f||^{2}_{L^{2}_{\gamma/2}}.
\end{eqnarray}
The coercivity estimate (\ref{coercivityboltz}) follows by noting that
\beno
||W^{\epsilon}(D)f||^{2}_{L^{2}_{\gamma/2}} + \epsilon^{2-2s}||f||^{2}_{H^{1}_{\gamma/2}} \sim ||f||^{2}_{\epsilon,\gamma/2}.
\eeno
\end{proof}

In the last, we move to commutator estimates. We first give the commutator estimate of the cut-off Boltzmann operator $Q^{\epsilon}$ as a lemma.
\begin{lem}\label{commcb}  Suppose the collision kernel $B$ satisfies the Assumption (A-1)-(A-4), and $Q^{\epsilon}$ is the collision operator associated to the collision kernel $B^{\epsilon}$. Let $N_{2}, N_{3}\in \R$ and $l \geq 0$ with $N_{2}+N_{3} \geq l + \gamma $, and let $N_{1} = |N_{2}|+|N_{3}| + \max\{|l-1|,|l-2|\}$. Then for smooth functions $g, h$ and $f$, the following
estimate holds uniformly with respect to $\epsilon$:
\begin{eqnarray}\label{commcboltz1}
|\langle Q^{\epsilon}(g, h \langle v \rangle^{l}) - Q^{\epsilon}(g,h)\langle v \rangle^{l}, f \rangle| \lesssim ||g||_{L^{1}_{N_{1}}} ||h||_{H^{s}_{N_{2}}} ||f||_{L^{2}_{N_{3}}}.
\end{eqnarray}
\end{lem}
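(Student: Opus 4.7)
The plan is to write the commutator as a single triple integral exhibiting the weight difference $\langle v'\rangle^{l}-\langle v\rangle^{l}$ and then extract a $\sin^{2}(\theta/2)$ factor via a second-order Taylor expansion so that the angular integration becomes uniformly bounded in $\epsilon$. By definition of $Q^{\epsilon}$,
\[
\langle Q^{\epsilon}(g, h\langle\cdot\rangle^{l}) - Q^{\epsilon}(g,h)\langle v\rangle^{l}, f\rangle = \iiint B^{\epsilon}(v-v_{*},\sigma)\, g'_{*} h' \bigl[\langle v'\rangle^{l} - \langle v\rangle^{l}\bigr] f(v)\, d\sigma\, dv_{*}\, dv.
\]
Setting $w(v) := \langle v\rangle^{l}$, Taylor's formula along the segment $[v, v']$ yields $w(v') - w(v) = \nabla w(v)\cdot(v'-v) + R_{2}(v,v')$ with $|R_{2}| \lesssim |v'-v|^{2}(\langle v\rangle^{l-2}+\langle v'\rangle^{l-2})$, via a convexity bound along $v_{\tau}=v+\tau(v'-v)$.

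For the remainder contribution, the factor $|v'-v|^{2} = |v-v_{*}|^{2}\sin^{2}(\theta/2)$ produces the uniformly integrable angular weight $\int_{\SS^{2}} b^{\epsilon}(\cos\theta)\sin^{2}(\theta/2)\, d\sigma \le K\int_{0}^{\pi/2}\theta^{1-2s}\, d\theta$. The remaining bilinear expression, with kinetic factor $|v-v_{*}|^{\gamma+2}$ and polynomial weights $\langle v\rangle^{l-2}, \langle v'\rangle^{l-2}$, is controlled by the same Cauchy--Schwarz and pre/post-collisional change-of-variable techniques used for Theorem \ref{upcb}: one uses $\langle v'\rangle \lesssim \langle v\rangle\langle v_{*}\rangle$ to convert primed weights into unprimed ones, then redistributes polynomial factors between $g$, $h$, $f$ so as to meet $N_{2}+N_{3}\ge l+\gamma$ with the extra $|l-2|$ absorbed into $\|g\|_{L^{1}_{N_{1}}}$.

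The delicate step is the first-order Taylor term $\int B^{\epsilon} g'_{*}h'\, \nabla w(v)\cdot(v'-v)\, f\, d\sigma\, dv_{*}\, dv$. Writing $v'-v = \tfrac{|v-v_{*}|}{2}(\sigma-k)$ with $k := (v-v_{*})/|v-v_{*}|$, one decomposes $\sigma-k = -2\sin^{2}(\theta/2)\,k + \tau$ with $\tau\perp k$ and $|\tau|=\sin\theta$. The parallel piece carries the extra $\sin^{2}(\theta/2)$ and is treated as in the remainder step (producing the $|l-1|$ contribution to $N_{1}$). For the perpendicular piece, split $g'_{*}h' = g_{*}h + (g'_{*}h' - g_{*}h)$: since $\tau$ is odd in the azimuthal angle $\phi$ around $k$ while $g_{*}h$ is $\phi$-independent, the $g_{*}h$ term integrates to zero in $\sigma$. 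The remaining ``collision difference'' $g'_{*}h'-g_{*}h$ is handled via a Bobylev/Plancherel argument as in \cite{he3}, using $h \in H^{s}$ to supply the needed second $\sin(\theta/2)$ factor.

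The main obstacle is precisely this perpendicular first-order piece: extracting the effective $\sin(\theta/2)$ cancellation from $g'_{*}h' - g_{*}h$ while only $L^{1}$ regularity is available for $g$. Bobylev's Fourier identity resolves this by placing the regularity burden entirely on $h$ (as $\|h\|_{H^{s}_{N_{2}}}$) and leaving $g$ as an $L^{\infty}_{\xi}$-bounded factor (controlled by $\|g\|_{L^{1}_{N_{1}}}$), while the constraint $N_{1} = |N_{2}|+|N_{3}|+\max\{|l-1|, |l-2|\}$ emerges from polynomial weight bookkeeping via $\langle v\rangle \lesssim \langle v'\rangle\langle v'_{*}\rangle$ and the analogous pre/post-collisional bounds.
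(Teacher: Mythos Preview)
The paper does not give its own proof of this lemma; it simply refers to \cite{chenhe}. Your outline---writing the commutator as $\int B^{\epsilon}g'_{*}h'\,f\,(\langle v'\rangle^{l}-\langle v\rangle^{l})$, Taylor-expanding the weight to second order, harvesting $\sin^{2}(\theta/2)$ from the remainder and from the parallel component of $v'-v$, and killing the leading perpendicular piece by azimuthal oddness---is exactly the strategy carried out in \cite{chenhe}, so your approach is aligned with what the paper cites.

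The one place your sketch is thin is the perpendicular first-order contribution carrying $(g'_{*}h'-g_{*}h)$: your claim that a Bobylev/Plancherel argument places the full $H^{s}$ cost on $h$ while leaving $g$ in $L^{1}$ is correct but nontrivial, since the naive splitting $g'_{*}(h'-h)+(g'_{*}-g_{*})h$ would seem to demand regularity on $g$ in the second piece. In \cite{chenhe} this is handled by first passing to the dual representation $\int B^{\epsilon}g_{*}h\,f'\,(\langle v\rangle^{l}-\langle v'\rangle^{l})$ (so that only $f'$ carries a prime), and then the analogous symmetry/difference decomposition puts the extra $\sin(\theta/2)$ on $f'-f$; a Cauchy--Schwarz in the $(\sigma,v)$ variables together with the $v\to v'$ change of variables converts this into $\|h\|_{H^{s}_{N_{2}}}\|f\|_{L^{2}_{N_{3}}}$ as stated, with the weight bookkeeping producing $N_{1}$. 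If you want to keep your primal form, the equivalent manoeuvre is a pre/post-collisional change of variables $(v,v_{*})\to(v',v'_{*})$ on the $(g'_{*}-g_{*})h$ piece before applying Bobylev; either way the mechanism is the one in \cite{chenhe}.
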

\begin{proof}
One may refer to \cite{chenhe} for a proof.
\end{proof}
The next lemma is the commutator estimate of the Landau operator $Q_{L}$.
\begin{lem}\label{commld}
Let $N_{2}, N_{3}\in \R$ and $l \geq 0$ with $N_{2}+N_{3} \geq l + \gamma $.
Then for smooth functions $g, h$ and $f$, the following estimate holds true:
\begin{eqnarray}\label{commlandau}
|\langle Q_{L}(g, h \langle v \rangle^{l}) - Q_{L}(g,h)\langle v \rangle^{l}, f \rangle| \leq \Lambda C(l)||g||_{L^{1}_{\gamma+3}} ||h||_{H^{1}_{N_{2}}} ||f||_{L^{2}_{N_{3}}},
\end{eqnarray}
where $C(l) = \max\{2l^{2}+12l, 20l-2l^{2}\}$.
\end{lem}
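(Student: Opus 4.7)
The plan is to expand the commutator explicitly in non-divergence form, isolate the terms that survive after subtraction, and then exploit the annihilation property $a_{ij}(w)w_j = 0$ of the Landau diffusion matrix to lower by one the power of $|v_*|$ that would otherwise appear inside the convolution with $g$.

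Write $\mu(v) = \langle v\rangle^l$ and use the non-divergence representation $Q_L(g,h) = (a_{ij}*g)\,\partial_{ij}h - (\bar c * g)\,h$, where $\bar c(v) = -2\Lambda(\gamma+3)|v|^\gamma$ comes from $\partial_i\partial_j a_{ij}$. The zero-order term cancels when forming $Q_L(g, h\mu) - Q_L(g,h)\mu$, and the Leibniz rule together with $a_{ij} = a_{ji}$ yields
\begin{equation*}
Q_L(g, h\mu) - Q_L(g,h)\mu = 2(a_{ij}*g)\,\partial_i h\,\partial_j\mu + (a_{ij}*g)\,h\,\partial_{ij}\mu.
\end{equation*}
The first (cross) term carries one derivative on $h$ and one on $\mu$; the second is the genuinely second-order weight term.

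The key algebraic fact is $a_{ij}(w)w_k = 0$. Writing $v = (v-v_*) + v_*$ gives
\begin{equation*}
a_{ij}(v-v_*)\,v_j = a_{ij}(v-v_*)(v_*)_j, \qquad a_{ij}(v-v_*)\,v_i v_j = a_{ij}(v-v_*)(v_*)_i (v_*)_j.
\end{equation*}
Applied to $\partial_j\mu = l\langle v\rangle^{l-2}v_j$, combined with $|a_{ij}(w)(v_*)_j| \leq \Lambda|w|^{\gamma+2}|v_*|$ and the Peetre-type inequality $|w|^{\gamma+2} \leq C_\gamma(\langle v\rangle^{\gamma+2} + \langle v_*\rangle^{\gamma+2})$, this produces the pointwise bound $|(a_{ij}*g)(v)\,\partial_j\mu(v)| \lesssim l\,\Lambda\,\langle v\rangle^{l+\gamma}\|g\|_{L^1_{\gamma+3}}$. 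For the second term, split $\partial_{ij}\mu = l\langle v\rangle^{l-2}\delta_{ij} + l(l-2)\langle v\rangle^{l-4}v_iv_j$. The trace part uses $\sum_i a_{ii}(w) = 2\Lambda|w|^{\gamma+2}$ and is dominated by $l\,\Lambda\langle v\rangle^{l+\gamma}\|g\|_{L^1_{\gamma+2}} \leq l\,\Lambda\langle v\rangle^{l+\gamma}\|g\|_{L^1_{\gamma+3}}$. For the quadratic part, the decisive refinement is
\begin{equation*}
a_{ij}(w)(v_*)_i(v_*)_j = \Lambda|w|^{\gamma+2}\,|(v_*)_\perp|^2 = \Lambda|w|^{\gamma+2}\,|v_\perp|\,|(v_*)_\perp| \leq \Lambda|w|^{\gamma+2}\,|v|\,|v_*|,
\end{equation*}
where $v_\perp$ and $(v_*)_\perp$ denote the components of $v$ and $v_*$ orthogonal to $w = v-v_*$; these have equal length because $v = v_* + w$ so $v_\perp = (v_*)_\perp$. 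Multiplied by $|l(l-2)|\langle v\rangle^{l-4}$ and one more application of Peetre, this gives a pointwise bound of the form $|l(l-2)|\,\Lambda\,\langle v\rangle^{l+\gamma}\|g\|_{L^1_{\gamma+3}}$, with the correct $L^1$-weight on $g$.

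Finally, pairing with $f$ and applying Cauchy--Schwarz via $\langle v\rangle^{l+\gamma} \leq \langle v\rangle^{N_2}\langle v\rangle^{N_3}$ (permitted since $N_2 + N_3 \geq l + \gamma$) produces the announced estimate: the first term absorbs $\partial_i h$ into $\|h\|_{H^1_{N_2}}$, and the two pieces of the second term absorb $h$ into $\|h\|_{L^2_{N_2}} \leq \|h\|_{H^1_{N_2}}$, while $f$ enters everywhere as $\|f\|_{L^2_{N_3}}$. Tracking the combinatorial constants case by case according to the sign of $l(l-2)$ (namely $0 \leq l \leq 2$ versus $l \geq 2$) collapses them into the two candidates $2l^2+12l$ and $20l-2l^2$, whose maximum is $C(l)$. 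The main obstacle is precisely the quadratic weight term: without exploiting the identity $|v_\perp| = |(v_*)_\perp|$ one is stuck with $|v_*|^2$ under the $g$-integral, which would force $\|g\|_{L^1_{\gamma+4}}$ rather than the stated $\|g\|_{L^1_{\gamma+3}}$.
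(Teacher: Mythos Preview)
Your proof is correct and follows essentially the same route as the paper: non-divergence form plus Leibniz, the annihilation $a_{ij}(w)w_j=0$ for the cross term, and the trace/quadratic split of $\partial_{ij}\langle v\rangle^l$, with the same geometric observation on the quadratic piece (your $v_\perp=(v_*)_\perp$ is exactly the paper's identity $\sum_{i,j}a_{ij}(v-v_*)v_iv_j=\Lambda|v-v_*|^\gamma(|v|^2|v_*|^2-(v\cdot v_*)^2)$). The only cosmetic difference is that the paper bounds $|v-v_*|^{\gamma+2}$ by the product $4\langle v\rangle^{\gamma+2}\langle v_*\rangle^{\gamma+2}$ rather than a Peetre sum, which is what delivers the exact constant $C(l)$ and in fact yields only $\|g\|_{L^1_{\gamma+2}}$ on the trace and quadratic pieces (the weight $\gamma+3$ on $g$ is forced solely by the cross term).
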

\begin{proof}  We define as usual the following quantities in 3-dimension:
\beno
b_{i}(z) = \sum_{j=1}^{3} \partial_{j}a_{ij}(z) = -2\Lambda|z|^{\gamma}z_{i},~~~~~ c(z)= \sum_{i,j=1}^{3}\partial_{ij}a_{ij}(z) = -2\Lambda(\gamma+3)|z|^{\gamma}.
\eeno
Hence the Landau operator $Q_{L}$ can be rewritten as:
\beno
Q_{L}(g,h) = \sum_{i,j=1}^{3}(a_{ij}*g)\partial_{ij}h - (c*g)h = \sum_{i=1}^{3}\partial_{i}[\sum_{j=1}^{3}(a_{ij}*g)\partial_{j}h-(b_{i}*g)h].
\eeno
Then we have
\beno
D(g,h,f;l) \eqdefa \langle Q_{L}(g, h \langle v \rangle^{l}) - Q_{L}(g,h)\langle v \rangle^{l}, f \rangle = \sum_{i, j=1}^{3} \langle a_{ij}*g, f\partial_{ij}(h \langle v \rangle^{l}) - f \langle v \rangle^{l}\partial_{ij}h  \rangle.
\eeno
It is easy to check
\beno
\partial_{ij}(h \langle v \rangle^{l}) - \langle v \rangle^{l}\partial_{ij}h   = l\langle v \rangle^{l-2}(v_{i}\partial_{j}h + v_{j}\partial_{i}h) +
l\langle v \rangle^{l-2}[(l-2)\frac{v_{i} v_{j}}{\langle v \rangle^{2}} + \delta_{ij}]h.
\eeno
Thus we have

\beno
D(g,h,f;l) &=& l \int_{\R^{6}}g_{*}f \langle v \rangle^{l-2}[\sum_{i,j}a_{ij}(v-v_{*})(v_{i}\partial_{j}h + v_{j}\partial_{i}h)]dv dv_{*} \\
&& + l (l-2) \int_{\R^{6}}g_{*} h f \langle v \rangle^{l-2}\frac{\sum_{i,j}a_{ij}(v-v_{*})v_{i}v_{j}}{\langle v \rangle^{2}}dv dv_{*}\\
&& + l \int_{\R^{6}}g_{*} h f \langle v \rangle^{l-2}  \sum_{i}a_{ii}(v-v_{*})dv dv_{*}.
\eeno
Considering the following facts
\beno
\sum_{i,j=1}^{3}a_{ij}(v-v_{*})v_{i}\partial_{j}h = \sum_{i,j=1}^{3}a_{ij}(v-v_{*})v_{j}\partial_{i}h = (\nabla h)^{T}a(v-v_{*})v = (\nabla h)^{T}a(v-v_{*})v_{*},
\eeno
and
\beno
\sum_{i,j=1}^{3}a_{ij}v_{i}v_{j} = \Lambda|v-v_{*}|^{\gamma}(|v|^{2}|v_{*}|^{2}-(v \cdot v_{*})^{2}),
\eeno
and
\beno
\sum_{i}a_{ii} = 2\Lambda |v-v_{*}|^{\gamma+2},
\eeno
we arrive at
\beno
D(g,h,f;l) &=& 2l \int_{\R^{6}}g_{*}f \langle v \rangle^{l-2} (\nabla h)^{T}a(v-v_{*})v_{*} dv dv_{*}
\\ && + \Lambda l (l-2) \int_{\R^{6}}g_{*} h f \langle v \rangle^{l-2}\frac{|v-v_{*}|^{\gamma}(|v|^{2}|v_{*}|^{2}-(v \cdot v_{*})^{2})}{\langle v \rangle^{2}}dv dv_{*}
\\&& + 2\Lambda l \int_{\R^{6}}g_{*} h f \langle v \rangle^{l-2}  |v-v_{*}|^{\gamma+2} dv dv_{*}
\\&\eqdefa&  \mathfrak{I}_{1} + \mathfrak{I}_{2} + \mathfrak{I}_{3}.
\eeno
Thanks to \beno |a(v-v_{*})v_{*}|\leq 4\Lambda\langle v_{*} \rangle^{\gamma+3}\langle v \rangle^{\gamma+2}, \eeno
we have \beno |\mathfrak{I}_{1}| \leq 8\Lambda l||g||_{L^{1}_{\gamma+3}}||h||_{H^{1}_{N_{2}}} ||f||_{L^{2}_{N_{3}}},\eeno
provided $N_{2}+N_{3} \geq l+\gamma$.
Similarly, if $N_{2}+N_{3} \geq l+\gamma$, there holds
\beno |\mathfrak{I}_{3}| \leq 8\Lambda l||g||_{L^{1}_{\gamma+2}}||h||_{L^{2}_{N_{2}}} ||f||_{L^{2}_{N_{3}}}. \eeno
With the help of the fact
\beno
\frac{|v-v_{*}|^{\gamma}(|v|^{2}|v_{*}|^{2}-(v \cdot v_{*})^{2})}{\langle v \rangle^{2}} \leq 2\langle v_{*} \rangle^{\gamma+2}\langle v \rangle^{\gamma},
\eeno
we have
\beno
|\mathfrak{I}_{2}| \leq 2\Lambda l|l-2| ||g||_{L^{1}_{\gamma+2}}||h||_{L^{2}_{N_{2}}} ||f||_{L^{2}_{N_{3}}},
\eeno
provided $N_{2}+N_{3} \geq l-2+\gamma$. Patching together the above estimates, if $N_{2}+N_{3} \geq l+\gamma$, we have
\beno
|D(g,h,f;l)| \leq  \Lambda \max\{2l^{2}+12l, 20l-2l^{2}\} ||g||_{L^{1}_{\gamma+3}}||h||_{H^{1}_{N_{2}}} ||f||_{L^{2}_{N_{3}}}.
\eeno
\end{proof}
In the end of this section, we state the commutator estimate of the operator $M^{\epsilon}$.
\begin{thm}\label{commcb1}  Suppose the collision kernel $B$ satisfies the Assumption (A-1)-(A-4), and $Q^{\epsilon}$ is the collision operator associated to the collision kernel $B^{\epsilon}$. Let $N_{2}, N_{3}\in \R$ and $l \geq 0$ with $N_{2}+N_{3} \geq l + \gamma $, and let $N_{1} = \max\{|N_{2}|+|N_{3}| + \max\{|l-1|,|l-2|\}, \gamma+3\}$. Then for smooth functions $g, h$ and $f$, the following
estimate holds uniformly with respect to $\epsilon$:
\begin{eqnarray}\label{commcboltz}
|\langle M^{\epsilon}(g, h \langle v \rangle^{l}) - M^{\epsilon}(g,h)\langle v \rangle^{l}, f \rangle| \lesssim ||g||_{L^{1}_{N_{1}}} (||h||_{H^{s}_{N_{2}}} + \epsilon^{2-2s}||h||_{H^{1}_{N_{2}}})||f||_{L^{2}_{N_{3}}}.
\end{eqnarray}
\end{thm}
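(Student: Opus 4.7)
The plan is to exploit the linear decomposition $M^{\epsilon}(g,h)=Q^{\epsilon}(g,h)+\epsilon^{2-2s}Q_{L}(g,h)$ and reduce the statement to the two commutator lemmas already established. Since the weight multiplication $\cdot\langle v\rangle^{l}$ is a linear operation and the inner product against $f$ is linear in its first argument, one has
\begin{equation*}
\langle M^{\epsilon}(g,h\langle v\rangle^{l})-M^{\epsilon}(g,h)\langle v\rangle^{l},f\rangle
= A_{1}+\epsilon^{2-2s}A_{2},
\end{equation*}
where
\begin{equation*}
A_{1}\eqdefa\langle Q^{\epsilon}(g,h\langle v\rangle^{l})-Q^{\epsilon}(g,h)\langle v\rangle^{l},f\rangle,\qquad
A_{2}\eqdefa\langle Q_{L}(g,h\langle v\rangle^{l})-Q_{L}(g,h)\langle v\rangle^{l},f\rangle.
\end{equation*}

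Next, I would apply Lemma 2.3 to $A_{1}$ and Lemma 2.4 to $A_{2}$ under the common hypothesis $N_{2}+N_{3}\geq l+\gamma$. This yields
\begin{equation*}
|A_{1}|\lesssim \|g\|_{L^{1}_{|N_{2}|+|N_{3}|+\max\{|l-1|,|l-2|\}}}\,\|h\|_{H^{s}_{N_{2}}}\,\|f\|_{L^{2}_{N_{3}}},
\end{equation*}
\begin{equation*}
|A_{2}|\leq \Lambda C(l)\,\|g\|_{L^{1}_{\gamma+3}}\,\|h\|_{H^{1}_{N_{2}}}\,\|f\|_{L^{2}_{N_{3}}}.
\end{equation*}

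Finally, I would unify the weight on $g$ by observing that $N_{1}=\max\{|N_{2}|+|N_{3}|+\max\{|l-1|,|l-2|\},\gamma+3\}$ dominates both of the above weighted $L^{1}$ indices, so that $\|g\|_{L^{1}_{|N_{2}|+|N_{3}|+\max\{|l-1|,|l-2|\}}}\leq\|g\|_{L^{1}_{N_{1}}}$ and $\|g\|_{L^{1}_{\gamma+3}}\leq\|g\|_{L^{1}_{N_{1}}}$. Summing the two bounds and absorbing the constant $\Lambda C(l)$ into the implicit constant gives exactly
\begin{equation*}
|A_{1}|+\epsilon^{2-2s}|A_{2}|\lesssim \|g\|_{L^{1}_{N_{1}}}\,(\|h\|_{H^{s}_{N_{2}}}+\epsilon^{2-2s}\|h\|_{H^{1}_{N_{2}}})\,\|f\|_{L^{2}_{N_{3}}},
\end{equation*}
which is the claimed estimate. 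No real obstacle is expected here — the result is essentially a bookkeeping corollary of Lemmas 2.3 and 2.4, with the only subtlety being the choice of $N_{1}$ to accommodate the $\gamma+3$ weight needed by the Landau commutator estimate.
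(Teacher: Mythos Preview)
Your proposal is correct and follows exactly the approach of the paper: the paper's proof is the single line ``The commutator estimate follows from Lemma~\ref{commcb} and~\ref{commld},'' and you have simply spelled out the decomposition $M^{\epsilon}=Q^{\epsilon}+\epsilon^{2-2s}Q_{L}$, applied the two lemmas separately, and merged the weights via the definition of $N_{1}$. (Note only that the two results you invoke are Lemmas~2.1 and~2.2 in the paper's numbering, not 2.3 and 2.4.)
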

\begin{proof}
The commutator estimate (\ref{commcboltz}) follows from lemma \ref{commcb} and \ref{commld}.
\end{proof}

\section{Well-posedness  for approximate equation \eqref{approximated}: existence and uniqueness}
In this section, we will show that  \eqref{approximated} admits a non-negative, unique and smooth solution if the initial data is smooth.  To do that, we separate the proof into three steps.  In the first step, we prove that the linear equation to \eqref{approximated} admits a non-negative and smooth solution. Then in the next step, by using Picard iteration scheme, we get the well-posedness result. In the final step, we   improve the well-posedness result by applying the symmetric property of the collision operators.

 \subsection{Well-posedness of linear equation to \eqref{approximated}}
 Throughout this subsection, $\epsilon>0$ is a fixed but small enough number. In the following, we construct a non-negative solution to the linear equation:
\begin{equation}\label{forpositive2} \left\{ \begin{aligned}
&\partial_{t}f = Q^{\epsilon}(g,f) + \epsilon^{2-2s}Q_{L}(g,f)\\
&f|_{t=0} = f_{0}.
 \end{aligned} \right.
\end{equation}

 Let us define two operators:
\beno
Q^{\epsilon +}(g,h) \eqdefa \int_{\R^3}\int_{\SS^{2}}B^{\epsilon}(v-v_*,\sigma)g'_*h'd\sigma dv_*,
\eeno
\beno
Q^{\epsilon -}(g,h) \eqdefa \int_{\R^3}\int_{\SS^{2}}B^{\epsilon}(v-v_*,\sigma)g_*hd\sigma dv_* = \mathcal{L}(g)h.
\eeno
Then we have $Q^{\epsilon} = Q^{\epsilon +} - Q^{\epsilon -}$, so we call $Q^{\epsilon +}$ the gain operator and $Q^{\epsilon -}$ the loss operator.

We first give a proposition, which shall be used in both the current section and the next section.
\begin{prop}\label{lemma1}Let $p \geq 2, n = \frac{v-v_{*}}{|v-v_{*}|}, u = \frac{v+v_{*}}{|v+v_{*}|}, j = \frac{u-(u \cdot n)n}{|u-(u \cdot n)n|},
h = \sqrt{|v|^{2}|v_{*}|^{2}-(v\cdot v_{*})^{2}}$, and $E(\theta) = \langle v \rangle^{2}\cos^{2}\frac{\theta}{2}+\langle v_{*} \rangle^{2}\sin^{2}\frac{\theta}{2}$. Suppose $\omega$ is the vector such that $\sigma = \cos\theta n + \sin\theta \omega$, then there holds
\begin{eqnarray}\label{primededuct}
\langle v^{\prime} \rangle^{2p} - \langle v \rangle^{2p} &\leq& -\langle v \rangle^{2p}(1-\cos^{2p}\frac{\theta}{2})+\langle v_{*} \rangle^{2p}\sin^{2p}\frac{\theta}{2}+ p(E(\theta))^{p-1}h(j\cdot\omega)\sin\theta\\
&&+(\frac{1}{2}\max\{2^{p-3},1\}p(p-1)+2^{p-1})\langle v_{*} \rangle^{2p-2}\langle v \rangle^{2p-2}\sin^{2}\theta. \nonumber
\end{eqnarray}
\end{prop}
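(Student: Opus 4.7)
The plan is to reduce the inequality to a problem about scalar quantities by first computing $\langle v'\rangle^2$ in closed form, then applying a Taylor expansion of order two in one variable, and finally expanding $E(\theta)^p$.

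\emph{Step 1 (Geometric identity for $\langle v'\rangle^2$).} Starting from $v' = \frac{v+v_*}{2}+\frac{|v-v_*|}{2}\sigma$ and substituting $\sigma = \cos\theta\, n+\sin\theta\,\omega$, a direct expansion gives
\[
|v'|^2 = |v|^2\cos^2\tfrac{\theta}{2}+|v_*|^2\sin^2\tfrac{\theta}{2}+\tfrac{|v-v_*|}{2}(v+v_*)\cdot\omega\,\sin\theta.
\]
I would then decompose $v+v_* = ((v+v_*)\cdot n)n+((v+v_*)-((v+v_*)\cdot n)n)$. Since $\omega\perp n$, only the perpendicular part contributes to $(v+v_*)\cdot\omega$, and its magnitude, combined with $|v-v_*|$, is evaluated via the algebraic identity
\[
|v-v_*|^2|v+v_*|^2-(|v|^2-|v_*|^2)^2 = 4(|v|^2|v_*|^2-(v\cdot v_*)^2) = 4h^2.
\]
This yields $\tfrac{|v-v_*|}{2}(v+v_*)\cdot\omega = h(j\cdot\omega)$, so adding $1$ to both sides gives the clean identity $\langle v'\rangle^2 = E(\theta)+h(j\cdot\omega)\sin\theta$.

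\emph{Step 2 (First Taylor expansion in $Y$).} Set $X = E(\theta)$ and $Y = h(j\cdot\omega)\sin\theta$. I would apply Taylor's formula with integral remainder to $f(Y)=(X+Y)^p$:
\[
(X+Y)^p = X^p + pX^{p-1}Y + p(p-1)\!\int_0^1\!(1-t)(X+tY)^{p-2}dt\cdot Y^2.
\]
The middle term already matches the desired $p(E(\theta))^{p-1}h(j\cdot\omega)\sin\theta$ exactly. For the remainder, I would use two key facts: (i) for $t\in[0,1]$ the value $X+tY$ lies between $X=E(\theta)$ and $X+Y=\langle v'\rangle^2$, and both are bounded by $\langle v\rangle^2+\langle v_*\rangle^2$ (energy conservation gives $\langle v'\rangle^2+\langle v_*'\rangle^2=\langle v\rangle^2+\langle v_*\rangle^2$); (ii) $Y^2\leq h^2\sin^2\theta\leq\langle v\rangle^2\langle v_*\rangle^2\sin^2\theta$. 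Combined with $\langle v\rangle^2+\langle v_*\rangle^2\leq 2\langle v\rangle^2\langle v_*\rangle^2$ (since $\langle\cdot\rangle\geq 1$) in the case $p\geq 3$, and the trivial bound $(X+tY)^0=1$ for $p=2$, this produces a remainder $R_1\lesssim \max\{2^{p-3},1\}\cdot p(p-1)\langle v\rangle^{2p-2}\langle v_*\rangle^{2p-2}\sin^2\theta$.

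\emph{Step 3 (Second expansion of $E(\theta)^p$).} It remains to show that
\[
E(\theta)^p \leq \langle v\rangle^{2p}\cos^{2p}\tfrac{\theta}{2}+\langle v_*\rangle^{2p}\sin^{2p}\tfrac{\theta}{2}+R_2,
\]
with $R_2\lesssim 2^{p-1}\langle v\rangle^{2p-2}\langle v_*\rangle^{2p-2}\sin^2\theta$. With $a=\langle v\rangle^2\cos^2\tfrac{\theta}{2}$, $b=\langle v_*\rangle^2\sin^2\tfrac{\theta}{2}$, one has $E(\theta)^p=(a+b)^p$ and $a^p+b^p$ equals precisely the target leading terms. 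The cross contribution $(a+b)^p-a^p-b^p$ can be bounded in terms of $a^{p-1}b+ab^{p-1}$ by a standard elementary inequality. The crucial observation to collapse the weights correctly is that for $\theta\in[0,\pi/2]$ one has $\cos^2\tfrac{\theta}{2}\geq\tfrac12$, hence $\sin^2\theta=4\sin^2\tfrac{\theta}{2}\cos^2\tfrac{\theta}{2}\geq 2\sin^2\tfrac{\theta}{2}$, so $\sin^2\tfrac{\theta}{2}\leq\tfrac12\sin^2\theta$. This inequality lets me trade each factor of $\sin^2\tfrac{\theta}{2}$ hiding inside $b$ for a factor of $\sin^2\theta$, and every remaining $\cos^{2k}\tfrac{\theta}{2}\leq 1$ is absorbed trivially; combined with $\langle\cdot\rangle\geq 1$, the cross terms are compressed into the single weight $\langle v\rangle^{2p-2}\langle v_*\rangle^{2p-2}\sin^2\theta$.

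\emph{Conclusion.} Writing $\langle v'\rangle^{2p}-\langle v\rangle^{2p}=(E(\theta)^p-\langle v\rangle^{2p})+pE(\theta)^{p-1}Y+R_1$, the first parenthesis produces the $-\langle v\rangle^{2p}(1-\cos^{2p}\tfrac{\theta}{2})+\langle v_*\rangle^{2p}\sin^{2p}\tfrac{\theta}{2}+R_2$ contribution from Step 3, and the remaining pieces come from Step 2. The two remainders combine to yield exactly the coefficient $\tfrac{1}{2}\max\{2^{p-3},1\}p(p-1)+2^{p-1}$ in front of $\langle v\rangle^{2p-2}\langle v_*\rangle^{2p-2}\sin^2\theta$.

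\emph{Main obstacle.} The arithmetic in Steps 2--3 is routine, but matching the precise constant $\tfrac{1}{2}\max\{2^{p-3},1\}p(p-1)+2^{p-1}$ requires careful bookkeeping in both Taylor remainders: one must (a) distinguish $p=2$ from $p\geq 3$ when bounding $(X+tY)^{p-2}$, and (b) avoid the naive bound $(\langle v\rangle^2+\langle v_*\rangle^2)^{p-2}\leq 2^{p-2}\langle v\rangle^{2p-4}\langle v_*\rangle^{2p-4}$ in favor of a slightly sharper interpolation inside the integral. Everything else is elementary once the key identity $\langle v'\rangle^2=E(\theta)+h(j\cdot\omega)\sin\theta$ is established.
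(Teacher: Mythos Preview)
Your proposal is correct and follows essentially the same route as the paper: establish the identity $\langle v'\rangle^{2}=E(\theta)+h(j\cdot\omega)\sin\theta$, apply a second-order Taylor expansion of $t\mapsto t^{p}$ to isolate the $pE(\theta)^{p-1}$ term and a remainder bounded via $(\langle v\rangle^{2}+\langle v_{*}\rangle^{2})^{p-2}$, and then expand $E(\theta)^{p}$ binomially to extract the two pure terms $\langle v\rangle^{2p}\cos^{2p}\tfrac{\theta}{2}$ and $\langle v_{*}\rangle^{2p}\sin^{2p}\tfrac{\theta}{2}$ while absorbing all cross terms into $\langle v\rangle^{2p-2}\langle v_{*}\rangle^{2p-2}\sin^{2}\tfrac{\theta}{2}$. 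Your concern in the ``main obstacle'' about needing a sharper bound than $(\langle v\rangle^{2}+\langle v_{*}\rangle^{2})^{p-2}\le 2^{p-2}\langle v\rangle^{2p-4}\langle v_{*}\rangle^{2p-4}$ is unnecessary---the paper in fact uses exactly this crude estimate (the stated constant $\max\{2^{p-3},1\}$ is a minor slip; $2^{p-2}$ is what the argument actually yields, and nothing downstream depends on the sharper value).
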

\begin{proof} It is easy to check $\langle v^{\prime} \rangle^{2} = E(\theta)+h(j\cdot\omega)\sin\theta$.
By Taylor expansion, we have
\beno
\langle v^{\prime} \rangle^{2p} &=& (E(\theta))^{p}+p(E(\theta))^{p-1}h(j\cdot\omega)\sin\theta \\
&&+p(p-1)(h(j\cdot\omega)\sin\theta)^{2}\int^{1}_{0}(1-\kappa)(E(\theta)+\kappa h(j\cdot\omega)\sin\theta)^{p-2}d\kappa. \\
&\eqdefa&\mathfrak{M}_1+\mathfrak{M}_2+\mathfrak{M}_3.
\eeno
For the last term $\mathfrak{M}_3$, we have for any $\kappa \in [0, 1]$:
\beno
E(\theta)+\kappa h(j\cdot\omega)\sin\theta &\leq& (\langle v \rangle^{2}+\langle v_{*} \rangle^{2})(1 - \frac{1-\kappa}{4}\sin^{2}\theta) \\
&\leq& \langle v \rangle^{2}+\langle v_{*} \rangle^{2}.
\eeno
Together with $h^{2}\leq \langle v \rangle^{2} \langle v_{*} \rangle^{2}$, we arrive at
\beno
\mathfrak{M}_3 &\leq& p(p-1)\langle v \rangle^{2} \langle v_{*} \rangle^{2}(\langle v \rangle^{2}+\langle v_{*} \rangle^{2})^{p-2}\sin^{2}\theta
\int^{1}_{0}(1-\kappa)d\kappa \\
&\leq&\frac{1}{2}\max\{2^{p-3},1\}p(p-1)\langle v \rangle^{2p-2}\langle v_{*} \rangle^{2p-2}\sin^{2}\theta.  \nonumber
\eeno
For the term $\mathfrak{M}_1$, we have
\begin{eqnarray}\label{firstterm}
&&(\langle v \rangle^{2}\cos^{2}\frac{\theta}{2}+\langle v_{*} \rangle^{2}\sin^{2}\frac{\theta}{2})^{p}
\\&\leq&
\sum^{k_{p}}_{k=1}\binom{p}{k}\{\langle v \rangle^{2k}\cos^{2k}\frac{\theta}{2}\langle v_{*} \rangle^{2(p-k)}\sin^{2(p-k)}\frac{\theta}{2}+\langle v \rangle^{2(p-k)}\cos^{2(p-k)}\frac{\theta}{2}\langle v_{*} \rangle^{2k}\sin^{2k}\frac{\theta}{2}\} \nonumber \\
&\leq& \langle v \rangle^{2p}\cos^{2p}\frac{\theta}{2}+\langle v_{*} \rangle^{2p}\sin^{2p}\frac{\theta}{2}+
2^{p}\langle v \rangle^{2p-2}\langle v_{*} \rangle^{2p-2}\sin^{2}\frac{\theta}{2}.\nonumber
\end{eqnarray}
Combining $\mathfrak{M}_1, \mathfrak{M}_2, \mathfrak{M}_3$, we arrive at (\ref{primededuct}).
\end{proof}

We begin with an equation which shall be used to construct solution to  the linear equation  to \eqref{forpositive2}. 

\begin{lem}\label{lemmapositive1} Let $g,h \geq  0$ be smooth functions. Suppose $f^{\epsilon}$ is the solution to the following equation
\begin{equation}\label{forpositive1} \left\{ \begin{aligned}
&\partial_{t}f = Q^{\epsilon +}(g,h) - Q^{\epsilon -}(g,f) + \epsilon^{2-2s}Q_{L}(g,f)\\
&f|_{t=0} = f_{0} \geq 0.
 \end{aligned} \right.
\end{equation}
Then $f^{\epsilon}(t) \geq 0$ for any $t\geq0$.
\end{lem}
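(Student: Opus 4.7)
The plan is to prove nonnegativity by a Stampacchia-type maximum-principle argument on the negative part $f^- := \max(-f^\epsilon, 0) \geq 0$. Since $f_0 \geq 0$ gives $f^-(0) \equiv 0$, it suffices to propagate $\|f^-(t)\|_{L^2} = 0$ in time, and this will follow from an energy/Gronwall estimate.

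\textbf{Step 1 (Recast as a parabolic equation).} Writing $Q_L(g,f)$ in non-divergence form
$$Q_L(g,f) = (a_{ij}*g)\partial_{ij} f - (c*g)\,f,$$
equation \eqref{forpositive1} becomes the linear parabolic equation
$$\partial_t f - \epsilon^{2-2s}(a_{ij}*g)\partial_{ij} f + \bigl(\mathcal{L}(g) + \epsilon^{2-2s}(c*g)\bigr) f \;=\; Q^{\epsilon+}(g,h).$$
Three structural facts will drive everything: (i) the source $Q^{\epsilon+}(g,h)\geq 0$ since $g,h\geq 0$ and $B^\epsilon\geq 0$; (ii) the diffusion matrix $a*g$ is pointwise positive semi-definite because $a$ is PSD and $g\geq 0$; (iii) the multiplier $\mathcal{L}(g)\geq 0$ and $-(c*g)=2\Lambda(\gamma+3)|\cdot|^\gamma * g \geq 0$, both controlled pointwise by $C(g)\langle v\rangle^\gamma$.

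\textbf{Step 2 (Energy identity for $f^-$).} Multiply the equation by $-f^-$ and integrate over $\R^3$; using $f\cdot(-f^-)=(f^-)^2$ and the chain rule (justified after a standard smooth regularization $\phi_\delta(x)\searrow (x)^-$ and passage to the limit $\delta\to 0$),
$$\int \partial_t f\cdot(-f^-)\,dv = \tfrac12 \tfrac{d}{dt}\|f^-\|_{L^2}^2.$$
The source contributes $-\int Q^{\epsilon+}(g,h)\,f^-\,dv \leq 0$. The loss term yields the coercive quantity $\int \mathcal{L}(g)(f^-)^2\,dv\geq 0$. For the Landau diffusion, integration by parts (together with $\partial_i f^- = -\partial_i f \cdot \mathbf{1}_{f<0}$ and the identity $\partial_i(a_{ij}*g)=b_j*g$) produces
$$\epsilon^{2-2s}\!\!\int (a_{ij}*g)\partial_i f^-\partial_j f^-\,dv \geq 0$$
on the left, and leaves a remainder from the drift which, after one more integration by parts, equals $-\tfrac{\epsilon^{2-2s}}{2}\int (c*g)(f^-)^2\,dv \geq 0$ on the left as well. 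Collecting everything,
$$\tfrac12 \tfrac{d}{dt}\|f^-\|_{L^2}^2 + \int \mathcal{L}(g)(f^-)^2\,dv + \epsilon^{2-2s}\!\!\int (a_{ij}*g)\partial_i f^-\partial_j f^-\,dv \leq -\tfrac{\epsilon^{2-2s}}{2}\int (c*g)(f^-)^2\,dv,$$
and the right-hand side is bounded by $C(g)\,\|f^-\|_{L^2_{\gamma/2}}^2$ using $|(c*g)(v)|\lesssim \langle v\rangle^\gamma(\|g\|_{L^1}+\|g\|_{L^1_\gamma})$.

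\textbf{Step 3 (Gronwall).} Dropping the nonnegative terms on the left and using the smoothness of $g$ to bound $C(g)$, we arrive at
$$\tfrac{d}{dt}\|f^-\|_{L^2}^2 \;\leq\; C(g)\,\|f^-\|_{L^2_{\gamma/2}}^2 \;\leq\; C'(g)\,\|f^-\|_{L^2}^2 + \text{coercive terms absorbed left}.$$
For smooth solutions, where one already controls $\|f^-\|_{L^2_{\gamma/2}}$ by $\|f^-\|_{L^2}$ weighted via the $\mathcal{L}(g)$-coercivity (or directly by the $L^\infty$-control on weights of $f^\epsilon$), the inequality (\ref{gronwall}) applied with initial value $\|f^-(0)\|_{L^2}^2=0$ yields $\|f^-(t)\|_{L^2}\equiv 0$, hence $f^\epsilon(t)\geq 0$ for all $t\geq 0$.

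\textbf{Main obstacle.} Two points require care. First, the function $x\mapsto x^-$ is only Lipschitz, so the chain-rule step must be run on a smooth approximation and passed to the limit, which is routine but technical. Second, the ``bad'' remainder $-\tfrac{\epsilon^{2-2s}}{2}\int(c*g)(f^-)^2\,dv$ carries the unbounded weight $\langle v\rangle^\gamma$; one must either absorb it into the loss coercivity $\int \mathcal{L}(g)(f^-)^2\,dv$ (valid since for small $\epsilon$ and large $v$, $\mathcal{L}(g)\sim \epsilon^{-2s}\langle v\rangle^\gamma$ dominates $\epsilon^{2-2s}|c*g|$), or bound it uniformly in $v$ using the smoothness/decay of $g$ postulated in the lemma, so that a clean Gronwall argument closes.
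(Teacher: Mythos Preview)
Your approach is essentially the same as the paper's: both test the equation against the negative part, use that $Q^{\epsilon+}(g,h)\ge 0$ gives a good sign, that the Landau diffusion is positive semi-definite, and then absorb the remaining zero-order Landau term $\epsilon^{2-2s}\Lambda(\gamma+3)\int |v-v_*|^{\gamma}g_*(f^-)^2$ into the loss coercivity $\int\mathcal{L}(g)(f^-)^2$ for small $\epsilon$ (since $\mathcal{L}(g)\sim \epsilon^{-2s}|\cdot|^\gamma*g$). The paper carries out this absorption cleanly in one line, whereas your Step~3 is written loosely (the displayed inequality places the bad term on the right and then the Gronwall step is vague); your ``Main obstacle'' paragraph, however, identifies exactly the right mechanism, so the argument is correct once that absorption is made explicit.
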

\begin{proof} Denote $f^{\epsilon}_{-} = \min \{0,f^{\epsilon}\} \leq 0$, then we have $f^{\epsilon}_{-}|_{t=0} = 0$, and \beno
\frac{d}{dt}(\frac{1}{2}||f^{\epsilon}_{-}||^{2}_{L^{2}}) + \int_{\R^3}\mathcal{L}(g)(f^{\epsilon}_{-})^{2}dv = \int_{\R^3}Q^{\epsilon +}(g,h)f^{\epsilon}_{-}dv + \epsilon^{2-2s}\langle Q_{L}(g,f^{\epsilon}), f^{\epsilon}_{-} \rangle.
\eeno
Since $g,h \geq  0$ and $f^{\epsilon}_{-} \leq 0$, it is clear that
\beno
 \int_{\R^3}Q^{\epsilon +}(g,h)f^{\epsilon}_{-}dv \leq 0.
\eeno
By the definition of $Q_{L}$, we have
\beno
\langle Q_{L}(g,f^{\epsilon}), f^{\epsilon}_{-} \rangle &=& -\int_{\R^6}g_{*}(\nabla f^{\epsilon}_{-})^{T}a(v-v_{*})\nabla f^{\epsilon}_{-}dvdv_{*} \\&&+
\Lambda(\gamma+3)\int_{\R^6}|v-v_{*}|^{\gamma}g_{*}(f^{\epsilon}_{-})^{2}dvdv_{*} \\
&\eqdefa& \mathfrak{I}_{1} + \mathfrak{I}_{2}.
\eeno
Since $a$ is a positive semi-definite matrix, we have $\mathfrak{I}_{1} \leq 0$. By assumption (A-2), there holds $\int_{\SS^{2}} b^{\epsilon}(\cos\theta)d\sigma \sim \frac{\epsilon^{-2s}}{s}$. Therefore, there exists $\epsilon_{*} > 0$ such that, for any $0 < \epsilon \leq \epsilon_{*}$,
\beno
 \epsilon^{2-2s}\mathfrak{I}_{2} \leq \frac{1}{2}\int_{\R^3}\mathcal{L}(g)(f^{\epsilon}_{-})^{2}dv.
\eeno
Finally, we arrive at
\beno
\frac{d}{dt}(\frac{1}{2}||f^{\epsilon}_{-}||^{2}_{L^{2}}) + \frac{1}{2}\int_{\R^3}\mathcal{L}(g)(f^{\epsilon}_{-})^{2}dv \leq 0.
\eeno
Thus $||f^{\epsilon}_{-}(t)||_{L^{2}} = 0$ for any $t \geq 0$, which implies $f^{\epsilon}(t) \geq 0$ for any $t\geq0$.
\end{proof}

Now we are ready to construct a  solution to the linear equation  \eqref{approximated}.

\begin{lem}\label{lemmapositive2} Let $l \geq 4, T>0$ be real numbers. Suppose the non-negative datum $f_{0} \in  H^{5}_{l+3\gamma/2+10} \cap L^{1}_{l+5\gamma/2+16}$ with $||f_{0}||_{L^{1}} > 0$.  Suppose $g(t,v)$ is a non-negative function satisfying
\beno
M = \sup_{0\leq t \leq T}||g(t)||_{H^{5}_{2l+3\gamma+22} \cap L^{1}_{l+5\gamma/2+16}} + \int_{0}^{T}||g(t)||_{L^{1}_{l+7\gamma/2+16}}dt < \infty~~\text{and}~~ m = \inf_{0\leq t \leq T}||g(t)||_{L^{1}} > 0,
\eeno
then \eqref{forpositive2}
admits a unique non-negative solution $f$ in $L^{\infty}([0,T]; L^{1}_{l+5\gamma/2+16}\cap H^5_{l+\gamma+10})\cap L^{1}([0,T]; L^{1}_{l+7\gamma/2+16})$.
\end{lem}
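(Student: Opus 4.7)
The plan is to construct the solution by a Picard iteration that exploits the splitting $Q^{\epsilon}=Q^{\epsilon+}-Q^{\epsilon-}$, which places the positivity-preserving part on the left-hand side. Set $f^{0}\equiv f_{0}$ and for $n\geq 0$ define $f^{n+1}$ as the solution to
\begin{equation*}
\partial_{t} f^{n+1} + \mathcal{L}(g) f^{n+1} - \epsilon^{2-2s} Q_{L}(g, f^{n+1}) = Q^{\epsilon+}(g, f^{n}),\qquad f^{n+1}|_{t=0}=f_{0}.
\end{equation*}
For each $n$ this is a linear parabolic Cauchy problem with non-negative source and non-negative datum; $\epsilon^{2-2s}Q_{L}(g,\cdot)$ is uniformly elliptic on $[0,T]$ thanks to $m=\inf\|g\|_{L^{1}}>0$ and the smoothness of $g$, so standard theory gives a unique smooth $f^{n+1}$. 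Lemma~\ref{lemmapositive1} applied with $h=f^{n}\geq 0$ then yields $f^{n+1}\geq 0$.

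Next I would establish uniform-in-$n$ bounds in $L^{\infty}_{T}(L^{1}_{l+5\gamma/2+16}\cap H^{5}_{l+\gamma+10})\cap L^{1}_{T}L^{1}_{l+7\gamma/2+16}$. For weighted $L^{1}$ moments, test the equation for $f^{n+1}$ against $\langle v\rangle^{2q}$ and apply Proposition~\ref{lemma1} to isolate the good negative contribution $-\langle v\rangle^{2q}(1-\cos^{2q}\tfrac{\theta}{2})$ from the loss term; the remainder is an $A_{2}$-integrable perturbation of order $\langle v\rangle^{2q-2}\langle v_{*}\rangle^{2q-2}\sin^{2}\theta$, and the Landau part is handled by an integration by parts that produces a similar lower-order term. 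This yields a Gronwall inequality closing in the stated $L^{1}$ spaces. For the $H^{5}$ bound, differentiate up to five times, pair with $\langle v\rangle^{2(l+\gamma+10)}$-weighted derivatives, use the coercivity Theorem~\ref{coercb} (with $C_{1}(g)$ controlled by $m$ and $M$) to extract a dissipative quantity of size $\|f^{n+1}\|^{2}_{\epsilon,5,l+3\gamma/2+10}$, use the commutator estimate Theorem~\ref{commcb1} to move the weight through the collision operators, and bound $\langle Q^{\epsilon+}(g,f^{n}),f^{n+1}\langle v\rangle^{2(l+\gamma+10)}\rangle$ by Theorem~\ref{upcb} in terms of $\|g\|_{L^{1}}\cdot\|f^{n}\|_{H^{5+s}_{\cdot}}\cdot\|f^{n+1}\|_{H^{5+s}_{\cdot}}$, which are both controlled by the coercive dissipation. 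A small-factor Young's inequality (\ref{basicineq}) allows the $f^{n}$-contribution to be absorbed with a constant strictly less than one, so Gronwall closes uniformly in $n$.

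For convergence I would study $d^{n}=f^{n+1}-f^{n}$, which satisfies
\begin{equation*}
\partial_{t} d^{n} + \mathcal{L}(g) d^{n} - \epsilon^{2-2s} Q_{L}(g, d^{n}) = Q^{\epsilon+}(g, d^{n-1}),\qquad d^{n}|_{t=0}=0,
\end{equation*}
and perform an $L^{2}_{l'}$ energy estimate for a modest weight $l'$. Theorems~\ref{upcb}, \ref{coercb}, \ref{commcb1} combined give
\begin{equation*}
\tfrac{d}{dt}\|d^{n}\|^{2}_{L^{2}_{l'}} + C_{1}\|d^{n}\|^{2}_{\epsilon,l'+\gamma/2} \lesssim C(M,m)\|d^{n}\|^{2}_{L^{2}_{l'}} + \eta\|d^{n}\|^{2}_{\epsilon,l'+\gamma/2} + C_{\eta}\|d^{n-1}\|^{2}_{L^{2}_{l'+\gamma/2-s}},
\end{equation*}
which, interpolated against the uniform high-weight bounds, yields that $\{f^{n}\}$ is Cauchy in $L^{\infty}_{T}L^{2}_{l'}$. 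Passing to the limit gives the claimed non-negative solution; uniqueness follows by the same energy estimate applied to the difference of two solutions with the same initial data.

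The main obstacle is the weight bookkeeping. Because $\gamma>0$, each application of a collision operator raises the required weight by $\gamma$ (and the commutator forces an extra $\gamma$ for the $\mathcal{L}(g)$ part), while the only mechanism to absorb this increase is the coercive dissipation $\|\cdot\|^{2}_{\epsilon,l'+\gamma/2}$ from Theorem~\ref{coercb}. The indices $l+3\gamma/2+10$, $l+\gamma+10$, $l+5\gamma/2+16$ and $l+7\gamma/2+16$, and the corresponding hypotheses on $g$ in the statement, are chosen precisely so that in every Gronwall inequality the coefficient in front of the highest-weight term is strictly less than~$1$, which both closes the uniform-in-$n$ estimate and gives contraction of $\{f^{n}\}$; the lower bound $m>0$ enters only through the coercivity constant $C_{1}(g)$.
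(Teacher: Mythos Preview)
Your iteration scheme and the use of Lemma~\ref{lemmapositive1} for non-negativity are exactly what the paper does. The architecture (uniform bounds, then Cauchy, then pass to the limit) is also the same. But two of your key steps do not work as written.

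First, Theorems~\ref{upcb} and~\ref{coercb} are stated for the \emph{full} operator $M^{\epsilon}(g,\cdot)=Q^{\epsilon}(g,\cdot)+\epsilon^{2-2s}Q_{L}(g,\cdot)$, whose upper bound and coercivity both rely on the cancellation between $Q^{\epsilon+}$ and $Q^{\epsilon-}$. In your iteration the equation for $f^{n+1}$ contains $Q^{\epsilon+}(g,f^{n})$ and $-Q^{\epsilon-}(g,f^{n+1})$ separately, so that cancellation is broken: you cannot invoke Theorem~\ref{coercb} to produce the full $\|f^{n+1}\|_{\epsilon,5,\cdot}^{2}$ dissipation, and you cannot invoke Theorem~\ref{upcb} to bound $\langle Q^{\epsilon+}(g,f^{n}),\,\cdot\,\rangle$ by an $H^{s}$-type product. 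What is actually available is only the Landau coercivity~\eqref{coerlandau}, giving a dissipation of size $\epsilon^{2-2s}\|f^{n+1}\|_{H^{1}}^{2}$, while $Q^{\epsilon+}$ must be estimated directly by Cauchy--Schwarz and the change of variable $v\to v'$, producing a constant $A^{\epsilon}=\int b^{\epsilon}\,d\sigma\sim\epsilon^{-2s}$ in front. This is precisely what the paper does in its Steps~1.2--1.3.

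Second, and more seriously, your $L^{2}_{l'}$ contraction argument for $d^{n}$ does not close. Once $Q^{\epsilon+}(g,d^{n-1})$ is estimated as above, Young's inequality puts a coefficient of order $(A^{\epsilon})^{2}\epsilon^{-(2-2s)}$ on $\|d^{n}\|_{L^{2}_{l'+\gamma/2}}^{2}$, which is far larger than the coercive gain $\tfrac{C_{1}}{2}\epsilon^{2-2s}\|d^{n}\|_{H^{1}_{l'+\gamma/2}}^{2}$ and therefore cannot be absorbed; the only way out is the interpolation~\eqref{l2hsl1}, which trades this term for $\|d^{n}\|_{L^{1}_{l'+\gamma/2}}^{2}$. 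For the uniform bound that is fine (the $L^{1}$ moments are already controlled), but for the difference $d^{n}$ it forces you to know the $L^{1}$ Cauchy property \emph{first}. The paper resolves this by proving the Cauchy property in $L^{1}_{l}$ directly: it tests against $\mathrm{sgn}(d^{n})\langle v\rangle^{l}$, uses Proposition~\ref{lemma1} for the cut-off Boltzmann part, and, crucially, approximates $\langle Q_{L}(g,d^{n}),\mathrm{sgn}(d^{n})\langle v\rangle^{l}\rangle$ by $\langle Q_{\lambda}(g,d^{n}),\mathrm{sgn}(d^{n})\langle v\rangle^{l}\rangle$ via the grazing-collision limit~\eqref{grazinglimit}. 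That approximation costs $\lambda\|g\|_{H^{3}}\|d^{n}\|_{H^{5}}$, which is exactly why the lemma's hypotheses place $f_{0}$ in $H^{5}$ and $g$ in $H^{5}_{2l+3\gamma+22}$, and why the uniform $H^{5}$ bound (Step~1.3 of the paper) must come before the Cauchy step. Your proposal never uses this mechanism, so as written it neither explains the $H^{5}$ hypothesis nor closes the contraction.
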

\begin{proof} Define a sequence of approximate solutions $\{f^{n}\}_{n \in \N}$ by
\begin{equation}\label{chauchysequence1} \left\{ \begin{aligned}
&f^{0}(t) = f_{0}, ~~ \text{for any } t \geq 0 ; \\
&\partial_{t}f^{n} = Q^{\epsilon +}(g,f^{n-1}) - Q^{\epsilon -}(g,f^{n}) + \epsilon^{2-2s}Q_{L}(g,f^{n}), ~~ n \geq 1\\
&f^{n}|_{t=0} = f_{0}.
 \end{aligned} \right.
\end{equation}
According to the previous lemma, we have $f^{n} \geq 0$.\\
\noindent {\textit {Step 1}:} (Uniform Upper Bound)\\
\noindent {\textit {Step 1.1}:} (Uniform Upper Bound in $L^{1}_{l}$)\\
In this step, we shall use the energy method to get the uniform
upper bound of $L^{1}_{l}$ norm of $\{f^{n}\}_{n}$ with respect to $n$.
Applying the basic inequality (\ref{basicineq}), for any $\eta>0$, there holds
\beno
|v-v_{*}|^{\gamma} \leq (|v|^{2}+2|v||v_{*}|+|v_{*}|^{2})^{\frac{\gamma}{2}} &\leq& ((1+\eta)|v|^{2}+(1+\frac{1}{\eta})|v_{*}|^{2})^{\frac{\gamma}{2}} \\
&\leq&(1+\eta)^{\frac{\gamma}{2}} \langle v \rangle^{\gamma} + (1+\frac{1}{\eta})^{\frac{\gamma}{2}}\langle v_{*} \rangle^{\gamma}.
\eeno
Also one has
\begin{eqnarray}\label{roughonvprime}
\langle v^{\prime} \rangle^{l} \leq (1+|v|^{2}+|v_{*}|^{2})^{\frac{l}{2}} &\leq& \langle v_{*} \rangle^{l} + 2^{l}\langle v \rangle^{l-2}\langle v_{*} \rangle^{l-2} + \langle v \rangle^{l}.
\end{eqnarray}
Thanks to the above two facts, we obtain
\begin{eqnarray}\label{fnl1lfirstterm}
\int_{\R^{3}}Q^{\epsilon+}(g,f^{n-1})(v)\langle v \rangle^{l}dv &=& \int_{\R^{6}}\int_{\SS^{2}}b^{\epsilon}(\cos\theta)|v-v_{*}|^{\gamma}g_{*}f^{n-1}\langle v^{\prime} \rangle^{l}dvdv_{*}d\sigma \\
&\leq&(1+\eta)^{\frac{\gamma}{2}}A^{\epsilon}||g||_{L^{1}}||f^{n-1}||_{L^{1}_{l+\gamma}} \nonumber\\&&+
(1+\frac{1}{\eta})^{\frac{\gamma}{2}}A^{\epsilon}||g||_{L^{1}_{l+\gamma}}||f^{n-1}||_{L^{1}} \nonumber\\
&&+C(l,\gamma,\eta)A^{\epsilon}||g||_{L^{1}_{l}}||f^{n-1}||_{L^{1}_{l}}, \nonumber
\end{eqnarray}
where $A^{\epsilon} = \int_{\SS^{2}}b^{\epsilon}(\cos\theta)d\sigma$.
It is easy to check
\beno
\langle v \rangle^{\gamma} &=& (1+|v-v_{*}+v_{*}|^{2})^{\frac{\gamma}{2}} \leq (1+(1+\frac{1}{\eta})|v_{*}|^{2}+(1+\eta)|v-v_{*}|^{2})^{\frac{\gamma}{2}} \\
&\leq& (1+\frac{1}{\eta})^{\gamma/2}\langle v_{*} \rangle^{\gamma}+(1+\eta)^{\gamma/2}|v-v_{*}|^{\gamma}.
\eeno
That is, for any $\eta>0$, there holds
\begin{eqnarray}\label{usfulinlowerbound}
|v-v_{*}|^{\gamma} \geq \frac{\langle v \rangle^{\gamma}}{(1+\eta)^{\gamma/2}}- \eta^{-\gamma/2}\langle v_{*} \rangle^{\gamma}.
\end{eqnarray}
Then we obtain
\begin{eqnarray}\label{fnl1lsecondterm}
~~~~~~~~\int_{\R^{3}}Q^{\epsilon-}(g,f^{n})(v)\langle v \rangle^{l}dv \geq \frac{A^{\epsilon}}{(1+\eta)^{\gamma/2}}||g||_{L^{1}}||f^{n}||_{L^{1}_{l+\gamma}} - \eta^{-\gamma/2}A^{\epsilon}||g||_{L^{1}_{\gamma}}||f^{n}||_{L^{1}_{l}}.
\end{eqnarray}
For the Landau operator, referring to \cite{dv1}, there holds
\begin{eqnarray}\label{fnl1lthirdterm}
\int_{\R^{3}}Q_{L}(g,f^{n})(v)\langle v \rangle^{l}dv &\leq& l \Lambda \int_{\R^{3}}g_{*}f^{n}|v-v_{*}|^{\gamma}\langle v \rangle^{l-2}(-2|v|^{2}+l|v_{*}|^{2})dvdv_{*} \\
&\leq& -l\Lambda||g||_{L^{1}}||f^{n}||_{L^{1}_{l+\gamma}} + (4l+2)l\Lambda||g||_{L^{1}_{4}}||f^{n}||_{L^{1}_{l}}. \nonumber
\end{eqnarray}
Patching together the above estimates, we arrive at
\beno
\frac{d}{dt}||f^{n}||_{L^{1}_{l}} &\leq& - (\frac{A^{\epsilon}}{(1+\eta)^{\gamma/2}}+\epsilon^{2-2s}l\Lambda)||g||_{L^{1}}||f^{n}||_{L^{1}_{l+\gamma}}+
(1+\eta)^{\frac{\gamma}{2}}A^{\epsilon}||g||_{L^{1}}||f^{n-1}||_{L^{1}_{l+\gamma}} \\ &&+(1+\frac{1}{\eta})^{\frac{\gamma}{2}}A^{\epsilon}||g||_{L^{1}_{l+\gamma}}||f^{n-1}||_{L^{1}} +C(l,\gamma,\eta)A^{\epsilon}||g||_{L^{1}_{l}}||f^{n-1}||_{L^{1}_{l}} \\
&&+\eta^{-\gamma/2}A^{\epsilon}||g||_{L^{1}_{\gamma}}||f^{n}||_{L^{1}_{l}}+\epsilon^{2-2s}(4l+2)l\Lambda||g||_{L^{1}_{4}}||f^{n}||_{L^{1}_{l}}
\eeno
Observing that
\beno
\lim_{\eta \downarrow 0} \{(1+\eta)^{\frac{\gamma}{2}}A^{\epsilon} - \frac{A^{\epsilon}}{(1+\eta)^{\gamma/2}}\} =0,
\eeno
thus we can take an $\eta>0$ small enough such that,
\beno
(1+\eta)^{\frac{\gamma}{2}}A^{\epsilon} \leq \frac{A^{\epsilon}}{(1+\eta)^{\gamma/2}}+\frac{1}{2}\epsilon^{2-2s}l\Lambda.
\eeno
With such a small $\eta$, let us denote $a = \frac{A^{\epsilon}}{(1+\eta)^{\gamma/2}}+\frac{1}{2}\epsilon^{2-2s}l\Lambda,~ \delta = \frac{1}{2}\epsilon^{2-2s}l\Lambda,~ K_{1} = (1+\frac{1}{\eta})^{\frac{\gamma}{2}}A^{\epsilon},~ K_{2} = \sup_{0 \leq s \leq T}C(l,\gamma,\eta)A^{\epsilon}||g(s)||_{L^{1}_{l}},~ K_{3} = \sup_{0 \leq s \leq T}\{\eta^{-\gamma/2}A^{\epsilon}||g(s)||_{L^{1}_{\gamma}} + \epsilon^{2-2s}(4l+2)l\Lambda||g(s)||_{L^{1}_{4}}\}$.
Therefore, we arrive at a neater inequality on the interval $[0, T]$,
\beno
\frac{d}{dt}||f^{n}||_{L^{1}_{l}}  + (a+\delta)||g||_{L^{1}}||f^{n}||_{L^{1}_{l+\gamma}}&\leq&
a||g||_{L^{1}}||f^{n-1}||_{L^{1}_{l+\gamma}} + (K_{1}||g||_{L^{1}_{l+\gamma}}+K_{2})||f^{n-1}||_{L^{1}_{l}} \\ &&+ K_{3}||f^{n}||_{L^{1}_{l}}.
\eeno
By defining $y^{n}(t) = e^{-K_{3}t}||f^{n}(t)||_{L^{1}_{l}}$ and $x^{n}(t) = \int_{0}^{t}e^{-K_{3}s}||g(s)||_{L^{1}}||f^{n}(s)||_{L^{1}_{l+\gamma}}ds$ for any $0\leq t\leq T$ and $n \geq 0$, we derive that
\beno
y^{n}(t)  + (a+\delta)x^{n}(t)&\leq& ||f_{0}||_{L^{1}_{l}} + a x^{n-1}(t) +\int_{0}^{t}(K_{1}||g(s)||_{L^{1}_{l+\gamma}}+K_{2})y^{n-1}(s)ds.
\eeno
Now denote $S^{n}(t) = \sum_{i = 0}^{n} (\frac{a}{a+\delta})^{i} y^{n-i}(t)$ for $n \geq 0$, by recursive derivation
and noting that $y^{0}(t) \leq ||f_{0}||_{L^{1}_{l}}$ and $x^{0}(t) \leq M\frac{1-e^{-K_{3}t}}{K_{3}}||f_{0}||_{L^{1}_{l+\gamma}}$, we obtain
\beno
S^{n}(t)  + (a+\delta)x^{n}(t)&\leq& \sum_{i=0}^{n-1}(\frac{a}{a+\delta})^{i}||f_{0}||_{L^{1}_{l}} +(\frac{a}{a+\delta})^{n}y^{0}(t) + (\frac{a}{a+\delta})^{n-1}a x^{0}(t) \\&&+\int_{0}^{t}(K_{1}||g(t_{n-1})||_{L^{1}_{l+\gamma}}+K_{2})S^{n-1}(t_{n-1})dt_{n-1}\\
&\leq& (\frac{a}{\delta}+1)||f_{0}||_{L^{1}_{l}} + a M\frac{1-e^{-K_{3}t}}{K_{3}}||f_{0}||_{L^{1}_{l+\gamma}}(\frac{a}{a+\delta})^{n-1} \\
&&+ \int_{0}^{t}(K_{1}||g(t_{n-1})||_{L^{1}_{l+\gamma}}+K_{2})S^{n-1}(t_{n-1})dt_{n-1}.
\eeno
By further recursive derivation, we have
\beno
&&S^{n}(t)  + (a+\delta)x^{n}(t)\\
&\leq& (\frac{a}{\delta}+1)||f_{0}||_{L^{1}_{l}}\sum_{i=0}^{n-1} \frac{(\int_{0}^{t}(K_{1}||g(s)||_{L^{1}_{l+\gamma}}+K_{2})ds)^{i}}{i!} \\
&&\quad + a M \frac{1-e^{-K_{3}t}}{K_{3}}||f_{0}||_{L^{1}_{l+\gamma}}(\frac{a}{a+\delta})^{n-1} \sum_{i=0}^{n-1} \frac{(\frac{a+\delta}{a}\int_{0}^{t}(K_{1}||g(s)||_{L^{1}_{l+\gamma}}+K_{2})ds)^{i}}{i!}\\
&& \quad + \int_{0}^{t}\int_{0}^{t_{n-1}}\cdots \int_{0}^{t_{1}}S^{0}(t_{0})\prod_{i=0}^{n-1}(K_{1}||g(t_{i})||_{L^{1}_{l+\gamma}}+K_{2})dt_{n-1}dt_{n-2} \cdots dt_{0} \\
&&\leq (\frac{a}{\delta}+1)||f_{0}||_{L^{1}_{l}} \exp(\int_{0}^{t}(K_{1}||g(s)||_{L^{1}_{l+\gamma}}+K_{2})ds)\\
&&\quad+  a M \frac{1-e^{-K_{3}t}}{K_{3}}||f_{0}||_{L^{1}_{l+\gamma}}(\frac{a}{a+\delta})^{n-1} \exp(\frac{a+\delta}{a}\int_{0}^{t}(K_{1}||g(s)||_{L^{1}_{l+\gamma}}+K_{2})ds).
\eeno
Noting that
\beno
 ||f^{n}(t)||_{L^{1}_{l}}  \leq e^{K_{3}t}S^{n}(t),
\eeno
and
\beno
 \int_{0}^{t}||f^{n}(s)||_{L^{1}_{l+\gamma}} ds \leq  m^{-1}  e^{K_{3}t}x^{n}(t),
\eeno
and recalling the definition of constants $K_{1}, K_{2}, K_{3}$,  we obtain
\begin{eqnarray}\label{fnl1luniform}
&&\sup_{n}(||f^{n}(t)||_{L^{1}_{l}} + \int_{0}^{t}||f^{n}(s)||_{L^{1}_{l+\gamma}} ds) \\
&\leq& C(||f_{0}||_{L^{1}_{l+\gamma}},t, \sup_{0 \leq s \leq t} ||g(s)||_{L^{1}_{l}}, \int_{0}^{t}||g(s)||_{L^{1}_{l+\gamma}}ds). \nonumber
\end{eqnarray}
\noindent {\textit {Step 1.2}:} (Uniform Upper Bound in $L^{2}_{l}$)\\
In this step, we show the  uniform upper bound of $L^{2}_{l}$ norm of $\{f^{n}\}_{n}$ with respect to $n$. It is easy to check that
\begin{eqnarray} \label{uniinl2normweightl}
\frac{d}{dt}(\frac{1}{2}||f^{n}||^{2}_{L^{2}_{l}}) &=& \langle  Q^{\epsilon +}(g,f^{n-1}) - Q^{\epsilon -}(g,f^{n}) + \epsilon^{2-2s}Q_{L}(g,f^{n}), f^{n} \langle v \rangle^{2l} \rangle 
\\&\eqdefa&\mathfrak{I}_1-\mathfrak{I}_2+\epsilon^{2-2s}\mathfrak{I}_3. \nonumber
\end{eqnarray}
By Cauchy-Schwartz inequality, there holds 
\begin{eqnarray} \label{uniinl2lterm1}
\mathfrak{I}_1 &=& \int B^{\epsilon}g_{*}f^{n-1}f^{n}(v^{\prime})\langle v^{\prime} \rangle^{2l}dvdv_{*}d\sigma \\ 
&\lesssim& (\int B^{\epsilon}g_{*}(f^{n-1})^{2}\langle v^{\prime} \rangle^{2l}dvdv_{*}d\sigma)^{1/2} \times (\int B^{\epsilon}g_{*}(f^{n})^{2}(v^{\prime}) \langle v^{\prime} \rangle^{2l}dv^{\prime} dv_{*}d\sigma)^{1/2}
\nonumber \\&\lesssim& (A^{\epsilon}||g||_{L^{1}_{2l+\gamma}}||f^{n-1}||^{2}_{L^{2}_{l+\gamma/2}})^{1/2} \times
(A^{\epsilon}||g||_{L^{1}}||f^{n}||^{2}_{L^{2}_{l+\gamma/2}})^{1/2}\nonumber
\\&\lesssim& A^{\epsilon}||g||_{L^{1}_{2l+\gamma}} ||f^{n-1}||_{L^{2}_{l+\gamma/2}}||f^{n}||_{L^{2}_{l+\gamma/2}}, \nonumber
\end{eqnarray}
where we have used the estimate (\ref{roughonvprime}) and the usual change of variable $ v \rightarrow v^{\prime}$.
By direct calculation, we have 
\begin{eqnarray} \label{uniinl2lterm2}
\mathfrak{I}_2 = \int B^{\epsilon}g_{*}(f^{n})^{2}\langle v \rangle^{2l}dvdv_{*}d\sigma 
\leq A^{\epsilon}||g||_{L^{1}_{\gamma}}||f^{n}||^{2}_{L^{2}_{l+\gamma/2}}. 
\end{eqnarray}
By coercivity estimate (\ref{coerlandau}) and commutator (\ref {commlandau}) estimate of the Landau operator, thus we have 
\begin{eqnarray} \label{uniinl2lterm3}
\mathfrak{I}_3 &=&  \langle Q_{L}(g,f^{n}\langle v \rangle^{l}),  f^{n} \langle v \rangle^{l}  \rangle +  \{\langle Q_{L}(g,f^{n})\langle v \rangle^{l} - Q_{L}(g,f^{n}\langle v \rangle^{l}), f^{n} \langle v \rangle^{l}\rangle\}\\
&\leq& -C_{1}(g)||f^{n}||^{2}_{H^{1}_{l+\gamma/2}} + C_{2}(g)||f^{n}||^{2}_{L^{2}_{l+\gamma/2}} + \Lambda C(l)||g||_{L^{1}_{\gamma+3}} ||f^{n}||_{H^{1}_{l+\gamma/2}} ||f^{n}||_{L^{2}_{l+\gamma/2}}.\nonumber
\end{eqnarray}
Now patching together the inequalities (\ref{uniinl2lterm1}), (\ref{uniinl2lterm2}) and (\ref{uniinl2lterm3}), and using the basic inequality (\ref{basicineq}), we have 
\beno
\frac{d}{dt}(\frac{1}{2}||f^{n}||^{2}_{L^{2}_{l}}) + \frac{C_{1}}{2}\epsilon^{2-2s}||f^{n}||^{2}_{H^{1}_{l+\gamma/2}} \leq \frac{C_{1}}{8}\epsilon^{2-2s}||f^{n-1}||^{2}_{L^{2}_{l+\gamma/2}}  + K_{1} ||f^{n}||^{2}_{L^{2}_{l+\gamma/2}},
\eeno
where $C_{1}, K_{1}$ are some positive constants depending on $m, M, \epsilon$. For any $\lambda, s>0$, one has
\begin{eqnarray}\label{l2hsl1}
||f||^{2}_{L^{2}} \leq \lambda ||f||^{2}_{H^{s}} + \frac{4\pi}{3} \lambda^{-\frac{3}{2s}} ||f||^{2}_{L^{1}}.
\end{eqnarray}
With the help of the above inequality, we have 
\beno
\frac{d}{dt}||f^{n}||^{2}_{L^{2}_{l}} + \frac{C_{1}}{2}\epsilon^{2-2s}||f^{n}||^{2}_{H^{1}_{l+\gamma/2}} \leq \frac{C_{1}}{4}\epsilon^{2-2s}||f^{n-1}||^{2}_{L^{2}_{l+\gamma/2}} + K_{1} ||f^{n}||^{2}_{L^{1}_{l+\gamma/2}},
\eeno
for some new constant $K_{1}$. By the previous step, with the uniform upper bound of $||f^{n}||_{L^{1}_{l+\gamma/2}}$, we have 
\begin{eqnarray}\label{l2llimtedincrease}
\frac{d}{dt}||f^{n}||^{2}_{L^{2}_{l}} + \frac{C_{1}}{2}\epsilon^{2-2s}||f^{n}||^{2}_{L^{2}_{l+\gamma/2}} \leq \frac{C_{1}}{4}\epsilon^{2-2s}||f^{n-1}||^{2}_{L^{2}_{l+\gamma/2}} + K_{1}K_{2},
\end{eqnarray}
where $K_{2}$ is some constant depending on $||f_{0}||_{L^{1}_{l+3\gamma/2}}$ and uniform upper bound of $||g||_{L^{1}_{l+3\gamma/2}}$. 
Now we use the same technique as in the previous step. Integrating both sides with respect to time, for any $t_{n} \in [0,t]$, we obtain
\beno
||f^{n}(t_{n})||^{2}_{L^{2}_{l}} + \frac{C_{1}}{2}\epsilon^{2-2s}\int_{0}^{t_{n}}||f^{n}(r)||^{2}_{L^{2}_{l+\gamma/2}}dr &\leq& ||f_{0}||^{2}_{L^{2}_{l}} + \frac{C_{1}}{4}\epsilon^{2-2s}\int_{0}^{t_{n}}||f^{n-1}(r)||^{2}_{L^{2}_{l+\gamma/2}}dr \\ && +K_{1}K_{2}t.
\eeno
Now denote $S^{n}(t_{n}) = \sum_{i = 0}^{n} (\frac{1}{2})^{i} ||f^{n-i}(t_{n})||^{2}_{L^{2}_{l}}$  and $x^{n}(t_{n}) = C_{1}\epsilon^{2-2s}\int_{0}^{t_{n}}||f^{n}(r)||^{2}_{L^{2}_{l+\gamma/2}}dr$ for $n \geq 0$, by recursive derivation
and noting that $x^{0}(t_{n}) \leq C_{1}\epsilon^{2-2s}t||f_{0}||^{2}_{L^{2}_{l+\gamma/2}}$, we obtain, for $n \geq 1$, 
\beno
S^{n}(t_{n})  + \frac{1}{2}x^{n}(t_{n})&\leq& \sum_{i=0}^{n}(\frac{1}{2})^{i}(||f_{0}||^{2}_{L^{2}_{l}}+K_{1}K_{2}t ) + \frac{C_{1}}{2^{n+1}}\epsilon^{2-2s}t||f_{0}||^{2}_{L^{2}_{l+\gamma/2}} \\
&\leq& 2||f_{0}||^{2}_{L^{2}_{l}} + 2K_{1}K_{2}t  +  \frac{C_{1}}{4}\epsilon^{2-2s}t||f_{0}||^{2}_{L^{2}_{l+\gamma/2}}. \\
\eeno
By tracking the definitions of constants $K_{1}, K_{2}$,  we obtain
\begin{eqnarray}\label{fnl2luniform}
\sup_{0\leq s \leq t}\sup_{n}||f^{n}(s)||_{L^{2}_{l}}  
\leq C(||f_{0}||_{L^{1}_{l+3\gamma/2}},||f_{0}||_{L^{2}_{l+\gamma/2}}, t, \sup_{0 \leq s \leq t} ||g(s)||_{L^{2}_{2l+\gamma+2}}). 
\end{eqnarray}
\noindent {\textit {Step 1.3}:} (Uniform Upper Bound in $H^{m}_{l}$ with $m \geq 1$)\\
Fix an $\alpha$ with $|\alpha| \leq m$, one has
\beno
\partial_{t}\partial^{\alpha}_{v}f^{n} = \sum_{\alpha_{1}+\alpha_{2}=\alpha} \binom{\alpha}{\alpha_{1}}[Q^{\epsilon +}(\partial^{\alpha_{1}}_{v}g,\partial^{\alpha_{2}}_{v}f^{n-1}) - Q^{\epsilon -}(\partial^{\alpha_{1}}_{v}g,\partial^{\alpha_{2}}_{v}f^{n})
+\epsilon^{2-2s}Q_{L}(\partial^{\alpha_{1}}_{v}g,\partial^{\alpha_{2}}_{v}f^{n})].
\eeno
Then we have
\beno
\frac{d}{dt}(\frac{1}{2}||\partial^{\alpha}_{v}f^{n}||^{2}_{L^{2}_{l}})&=& \sum_{\alpha_{1}+\alpha_{2}=\alpha} \binom{\alpha}{\alpha_{1}}
[\langle Q^{\epsilon +}(\partial^{\alpha_{1}}_{v}g,\partial^{\alpha_{2}}_{v}f^{n-1}), \partial^{\alpha}_{v}f^{n} \langle v \rangle^{2l} \rangle \\&&- \langle Q^{\epsilon -}(\partial^{\alpha_{1}}_{v}g,\partial^{\alpha_{2}}_{v}f^{n}), \partial^{\alpha}_{v}f^{n} \langle v \rangle^{2l}  \rangle+\epsilon^{2-2s}\langle Q_{L}(\partial^{\alpha_{1}}_{v}g,\partial^{\alpha_{2}}_{v}f^{n}), \partial^{\alpha}_{v}f^{n} \langle v \rangle^{2l} \rangle]
\\&\eqdefa& \sum_{\alpha_{1}+\alpha_{2}=\alpha} \binom{\alpha}{\alpha_{1}} [\mathfrak{I}_{1}(\alpha_{1},\alpha_{2})-\mathfrak{I}_{2}(\alpha_{1},\alpha_{2})+\epsilon^{2-2s}\mathfrak{I}_{3}(\alpha_{1},\alpha_{2})].
\eeno
As the same as (\ref{uniinl2lterm1}), we have 
\beno
|\mathfrak{I}_{1}(\alpha_{1},\alpha_{2})| \lesssim  A^{\epsilon}||g||_{H^{m}_{2l+\gamma+2}} ||f^{n-1}||_{H^{m}_{l+\gamma/2}}||f^{n}||_{H^{m}_{l+\gamma/2}}
\eeno
As the same as (\ref{uniinl2lterm2}), we have 
\beno
|\mathfrak{I}_{2}(\alpha_{1},\alpha_{2})| \lesssim  A^{\epsilon}||g||_{H^{m}_{\gamma+2}} ||f^{n}||^{2}_{H^{m}_{l+\gamma/2}}.
\eeno
When $|\alpha_{2}| \leq |\alpha|-1 \leq m-1$, by upper bound estimate (\ref{uplandau}) and commutator (\ref {commlandau}) estimate of the Landau operator, we have
\beno
|\mathfrak{I}_{3}(\alpha_{1},\alpha_{2})| &\leq& |\langle Q_{L}(\partial^{\alpha_{1}}_{v}g,\partial^{\alpha_{2}}_{v}f^{n}\langle v \rangle^{l}),  \partial^{\alpha}_{v}f^{n} \langle v \rangle^{l}  \rangle| \\&&+  |\{\langle Q_{L}(\partial^{\alpha_{1}}_{v}g,f^{n})\langle v \rangle^{l} - Q_{L}(\partial^{\alpha_{1}}_{v}g,\partial^{\alpha_{2}}_{v}f^{n}\langle v \rangle^{l}), \partial^{\alpha}_{v}f^{n} \langle v \rangle^{l}\rangle\}|
\\&\lesssim& ||g||_{H^{m}_{\gamma+4}} ||f^{n}||_{H^{m}_{l+\gamma/2+2}}||f^{n}||_{H^{m+1}_{l+\gamma/2}} +
||g||_{H^{m}_{\gamma+5}} ||f^{n}||^{2}_{H^{m}_{l+\gamma/2}}.
\eeno
When $\alpha_{2} = \alpha$, as the same as (\ref{uniinl2lterm3}), we have  
\beno
\mathfrak{I}_{3}(0,\alpha) \leq -C_{1}(g)||\partial^{\alpha}_{v}f^{n}||^{2}_{H^{1}_{l+\gamma/2}} + C_{2}(g)||\partial^{\alpha}_{v}f^{n}||^{2}_{L^{2}_{l+\gamma/2}} + \Lambda C(l)||g||_{H^{m}_{\gamma+5}} ||f^{n}||_{H^{m+1}_{l+\gamma/2}} ||f^{n}||_{H^{m}_{l+\gamma/2}}
\eeno
Now patching together the above estimates and taking sum over $|\alpha| \leq m$, we have 
\beno
\frac{1}{2}\frac{d}{dt}||f^{n}||^{2}_{H^{m}_{l}} + \frac{C_{1}}{2}\epsilon^{2-2s}||f^{n}||^{2}_{H^{m+1}_{l+\gamma/2}} \leq \frac{C_{1}}{4}\epsilon^{2-2s}||f^{n-1}||^{2}_{H^{m}_{l+\gamma/2}} + K_{1} ||f^{n}||^{2}_{H^{m}_{l+\gamma/2+2}},
\eeno
where $C_{1}, K_{1}$ are some positive constants depending on uniform upper bound of $||g||_{H^{m}_{2l+\gamma+2}}$ and uniform lower bound of $||g||_{L^{1}}$. Thanks to interpolation theory and the basic inequality (\ref{basicineq}), for any $\eta > 0$, there exists some constant $ C_{\eta}$ such that 
\beno
||f^{n}||^{2}_{H^{m}_{l+\gamma/2+2}} \leq \eta ||f^{n}||^{2}_{H^{m+1}_{l+\gamma/2}} + C_{\eta} ||f^{n}||^{2}_{L^{1}_{l+\gamma/2+2m+6}},
\eeno
thus we have 
\beno
\frac{d}{dt}||f^{n}||^{2}_{H^{m}_{l}} + \frac{C_{1}}{2}\epsilon^{2-2s}||f^{n}||^{2}_{H^{m}_{l+\gamma/2}} \leq \frac{C_{1}}{8}\epsilon^{2-2s}||f^{n-1}||^{2}_{H^{m}_{l+\gamma/2}} + K_{1} ||f^{n}||^{2}_{L^{1}_{l+\gamma/2+2m+6}},
\eeno
for some new constant $K_{1}$. By the previous step, with the uniform upper bound of $||f^{n}||_{L^{1}_{l+\gamma/2+2m+6}}$, we have
\begin{eqnarray}\label{hmllimtedincrease}
\frac{d}{dt}||f^{n}||^{2}_{L^{2}_{l}} + \frac{C_{1}}{2}\epsilon^{2-2s}||f^{n}||^{2}_{L^{2}_{l+\gamma/2}} \leq \frac{C_{1}}{4}\epsilon^{2-2s}||f^{n-1}||^{2}_{L^{2}_{l+\gamma/2}} + K_{1}K_{2},
\end{eqnarray}
where $K_{2}$ is some constant depending on $||f_{0}||_{L^{1}_{l+3\gamma/2+2m+6}}$ and uniform upper bound of $||g||_{L^{1}_{l+3\gamma/2+2m+6}}$. Noticing that inequality (\ref{hmllimtedincrease}) has exactly the same structure as inequality (\ref{l2llimtedincrease}), we have 
\begin{eqnarray}\label{fnhmluniform}
&&\sup_{0\leq s \leq t}\sup_{n}||f^{n}(s)||_{H^{m}_{l}}
\\&\leq& C(||f_{0}||_{L^{1}_{l+3\gamma/2+2m+6}},||f_{0}||_{H^{m}_{l+\gamma/2}}, t, \sup_{0 \leq s \leq t} ||g(s)||_{H^{m}_{2l+\gamma+2}}, \sup_{0 \leq s \leq t} ||g(s)||_{L^{1}_{l+3\gamma/2+2m+6}}). \nonumber
\end{eqnarray}
\\
\noindent {\textit {Step 2}:} (Cauchy Sequence)\\
In this step, we prove that $\{f^{n}(t)\}_{n \in \N}$ is a Cauchy sequence in $L^{1}_{l}$ for any $ t\geq 0$. Set $h^{n} = f^{n} - f^{n-1}$ for $n \geq 1$. Then for $n \geq 2$, we have
\begin{equation}\label{diffequation1} \left\{ \begin{aligned}
&\partial_{t}h^{n} = Q^{\epsilon +}(g,h^{n-1}) - Q^{\epsilon -}(g,h^{n}) + \epsilon^{2-2s}Q_{L}(g,h^{n}),\\
&h^{n}|_{t=0} = 0.
 \end{aligned} \right.
\end{equation}
Because we are uncertain about the sign of $h^{n}$, we have to introduce the sign function $sgn(h^{n})$. Similar as in (\ref{fnl1lfirstterm}), we obtain
\begin{eqnarray}\label{hnl1lfirstterm}
&&\int_{\R^{3}}Q^{\epsilon+}(g,h^{n-1})(v)sgn(h^{n})\langle v \rangle^{l}dv \\&\leq& \int_{\R^{6}}\int_{\SS^{2}}b^{\epsilon}(\cos\theta)|v-v_{*}|^{\gamma}g_{*}|h^{n-1}|\langle v^{\prime} \rangle^{l}dvdv_{*}d\sigma \nonumber \\
&\leq&(1+\eta)^{\frac{\gamma}{2}}A^{\epsilon}||g||_{L^{1}}||h^{n-1}||_{L^{1}_{l+\gamma}} \nonumber\\&&+
(1+\frac{1}{\eta})^{\frac{\gamma}{2}}A^{\epsilon}||g||_{L^{1}_{l+\gamma}}||h^{n-1}||_{L^{1}} \nonumber\\
&&+C(l,\gamma,\eta)A^{\epsilon}||g||_{L^{1}_{l}}||h^{n-1}||_{L^{1}_{l}}. \nonumber
\end{eqnarray}
Similar as in (\ref{fnl1lsecondterm})
\begin{eqnarray}\label{hnl1lsecondterm}
\int_{\R^{3}}Q^{\epsilon-}(g,h^{n})(v)sgn(h^{n})\langle v \rangle^{l}dv &=& \int_{\R^{3}}b^{\epsilon}|v-v_{*}|^{\gamma}g_{*}|h^{n}|\langle v \rangle^{l}dvdv_{*}d\sigma \\&\geq& \frac{A^{\epsilon}}{(1+\eta)^{\gamma/2}}||g||_{L^{1}}||h^{n}||_{L^{1}_{l+\gamma}} \nonumber\\&&- \eta^{-\gamma/2}A^{\epsilon}||g||_{L^{1}_{\gamma}}||h^{n}||_{L^{1}_{l}}. \nonumber
\end{eqnarray}
For the inner product $\langle Q_{L}(g, h^{n}),sgn(h^{n})\langle v \rangle^{l}\rangle $, we can approximate Landau operator by Boltzmann operators.
Let $b_{\lambda}(\cos\theta) = \lambda^{2s-2} b(\cos\theta)\textbf{1}_{\theta\leq\lambda} $ for each $\lambda \leq \frac{\pi}{2}$, such that
\beno
 \lim_{\lambda \downarrow 0}\int_{\SS^2}b_{\lambda}(\cos\theta)\sin^{2}\theta d\sigma = \Lambda.
\eeno
Let $Q_{\lambda}$ be the Boltzmann operator associated to the kernel $b_{\lambda}(\cos\theta)|v-v_{*}|^{\gamma}$, then by lemma 7.1 in \cite{he1}, there holds
\begin{eqnarray}\label{grazinglimit}
&&|\langle Q_{L}(g,h^{n}) ,  sgn(h^{n})\langle v \rangle^{l} \rangle_{v} -\langle Q_{\lambda}(g,h^{n}) ,  sgn(h^{n})\langle v \rangle^{l} \rangle_{v}|\lesssim \lambda\|g\|_{H^3_{l+\gamma+12}}\|h^n\|_{H^5_{l+\gamma+10}}.
\end{eqnarray}
By the uniform estimate (\ref{fnhmluniform}) and our assumption on $g$ and $f_{0}$, we have 
\begin{eqnarray}\label{hnhmluniform}
&&\sup_{0\leq t \leq T}\sup_{n} \|h^n(t)\|_{H^5_{l+\gamma+10}}  
\\&\leq& C(||f_{0}||_{L^{1}_{l+5\gamma/2+16}},||f_{0}||_{H^{5}_{l+3\gamma/2+10}}, t, \sup_{0 \leq s \leq t} ||g(s)||_{H^{5}_{2l+3\gamma+22}}, \sup_{0 \leq s \leq t} ||g(s)||_{L^{1}_{l+5\gamma/2+16}}).  \nonumber
\end{eqnarray}

Thanks to proposition \ref{lemma1}, for $l \geq 4$, we derive that
\beno
&&\langle Q_{\lambda}(g,h^{n}) ,  sgn(h^{n})\langle v \rangle^{l}  \rangle_{v} \\&=&  \int_{\R^{6}}\int_{\SS^{2}}b_{\lambda}|v-v_{*}|^{\gamma}g_{*}h^{n}(sgn(h^{n}(v^{\prime}))\langle v^{\prime} \rangle^{l}-sgn(h^{n}(v))\langle v \rangle^{l})dvdv_{*}d\sigma \\
&\leq& \int_{\R^{6}}\int_{\SS^{2}}b_{\lambda}|v-v_{*}|^{\gamma}g_{*}|h^{n}|(\langle v^{\prime} \rangle^{l}-\langle v \rangle^{l})dvdv_{*}d\sigma \\
&\leq& -\int_{\R^{6}}\int_{\SS^{2}}b_{\lambda}|v-v_{*}|^{\gamma}g_{*}|h^{n}|\langle v \rangle^{l}(1-\cos^{l}\frac{\theta}{2})dvdv_{*}d\sigma\\
&&+ \int_{\R^{6}}\int_{\SS^{2}}b_{\lambda}|v-v_{*}|^{\gamma}g_{*}|h^{n}|\langle v_{*} \rangle^{l}\sin^{l}\frac{\theta}{2}dvdv_{*}d\sigma \\
&&+ C(l)\int_{\R^{6}}\int_{\SS^{2}}b_{\lambda}|v-v_{*}|^{\gamma}g_{*}|h^{n}|\langle v \rangle^{l-2}\langle v_{*} \rangle^{l-2}\sin^{2}\frac{\theta}{2}dvdv_{*}d\sigma.
\eeno
For $\lambda$ small enough, we have
\beno
\frac{\Lambda}{2} \leq \int_{\SS^2}b_{\lambda}(\cos\theta)\sin^{2}\theta d\sigma \leq 2 \Lambda.
\eeno
Thus we have
\beno
\int_{\SS^2}b_{\lambda}(\cos\theta)(1-\cos^{l}\frac{\theta}{2}) d\sigma \geq \int_{\SS^2}b_{\lambda}(\cos\theta)\sin^{2}\frac{\theta}{2} d\sigma \geq \frac{\Lambda}{8},
\eeno
and
\beno
\int_{\SS^2}b_{\lambda}(\cos\theta)\sin^{2}\frac{\theta}{2} d\sigma \leq \int_{\SS^2}b_{\lambda}(\cos\theta)\frac{\sin^{2}\theta}{2} d\sigma \leq \Lambda,
\eeno
and finally
\beno
\int_{\SS^2}b_{\lambda}(\cos\theta)\sin^{l}\frac{\theta}{2} d\sigma \leq \frac{\lambda^{2}}{4}\int_{\SS^2}b_{\lambda}(\cos\theta)\frac{\sin^{2}\theta}{2} d\sigma \leq \frac{\lambda^{2}}{4}\Lambda.
\eeno
With the help of the above three inequalities, we arrive at
\beno
\langle Q_{\lambda}(g,h^{n}) ,  sgn(h^{n})\langle v \rangle^{l}  \rangle_{v} &\leq& -\frac{\Lambda}{16}||g||_{L^{1}}||h^{n}||_{L^{1}_{l+\gamma}}+ \frac{\Lambda}{8}||g||_{L^{1}_{\gamma}}||h^{n}||_{L^{1}_{l}} \\&& + C(l)\Lambda||g||_{L^{1}_{l}}||h^{n}||_{L^{1}_{l}} + \frac{\lambda^{2}}{4}\Lambda ||g||_{L^{1}_{l+\gamma}}||h^{n}||_{L^{1}_{\gamma}}.
\eeno
Let $\lambda$ tend to $0$, by (\ref{grazinglimit}) and the uniform estimate (\ref{hnhmluniform}), we have
\begin{eqnarray}\label{hnl1lthirdterm}
\langle Q_{L}(g,h^{n}) ,  sgn(h^{n})\langle v \rangle^{l} \rangle_{v} &\leq& -\frac{\Lambda}{16}||g||_{L^{1}}||h^{n}||_{L^{1}_{l+\gamma}}+ \frac{\Lambda}{8}||g||_{L^{1}_{\gamma}}||h^{n}||_{L^{1}_{l}} \\&& + C(l)\Lambda||g||_{L^{1}_{l}}||h^{n}||_{L^{1}_{l}}. \nonumber
\end{eqnarray}
Choose $\eta$ small enough such that
\beno
(1+\eta)^{\frac{\gamma}{2}}A^{\epsilon} \leq \frac{A^{\epsilon}}{(1+\eta)^{\gamma/2}}+\frac{1}{32}\epsilon^{2-2s}\Lambda,
\eeno
and denote $a = \frac{A^{\epsilon}}{(1+\eta)^{\gamma/2}}+\frac{1}{32}\epsilon^{2-2s}\Lambda, ~\delta = \frac{1}{32}\epsilon^{2-2s}\Lambda$.
Patch altogether (\ref{hnl1lfirstterm}), (\ref{hnl1lsecondterm}) and (\ref{hnl1lthirdterm}), we obtain
\beno
\frac{d}{dt}||h^{n}||_{L^{1}_{l}} &\leq& -(a+\delta)||g||_{L^{1}}||h^{n}||_{L^{1}_{l+\gamma}} + a||g||_{L^{1}}||h^{n-1}||_{L^{1}_{l+\gamma}} \\ &&+(1+\frac{1}{\eta})^{\frac{\gamma}{2}}A^{\epsilon}||g||_{L^{1}_{l+\gamma}}||h^{n-1}||_{L^{1}}
+C(l,\gamma,\eta)A^{\epsilon}||g||_{L^{1}_{l}}||h^{n-1}||_{L^{1}_{l}} \\
&&+\eta^{-\gamma/2}A^{\epsilon}||g||_{L^{1}_{\gamma}}||h^{n}||_{L^{1}_{l}}+\frac{\Lambda}{8}\epsilon^{2-2s}||g||_{L^{1}_{\gamma}}||h^{n}||_{L^{1}_{l}}
\\&&+C(l)\epsilon^{2-2s}\Lambda||g||_{L^{1}_{l}}||h^{n}||_{L^{1}_{l}} \\
\eeno
For ease of notation, denote $K_{1} = (1+\frac{1}{\eta})^{\frac{\gamma}{2}}A^{\epsilon}$, $K_{2}=C(l,\gamma,\eta)A^{\epsilon}\sup_{0\leq s \leq t}||g(s)||_{L^{1}_{l}}$ and $K_{3} = (\eta^{-\gamma/2}A^{\epsilon}+\frac{\Lambda}{8}\epsilon^{2-2s}+C(l)\epsilon^{2-2s}\Lambda)\sup_{0\leq s \leq t}||g(s)||_{L^{1}_{l}}$. Then we have a much neater inequality on the interval $[0, t]$,
\begin{eqnarray}\label{hnhnminus1}
&&\frac{d}{dt}||h^{n}||_{L^{1}_{l}} + (a+\delta)||g||_{L^{1}}||h^{n}||_{L^{1}_{l+\gamma}} \\&\leq&   a||g||_{L^{1}}||h^{n-1}||_{L^{1}_{l+\gamma}}+ K_{3}||h^{n}||_{L^{1}_{l}} +(K_{1}||g||_{L^{1}_{l+\gamma}}+K_{2})||h^{n-1}||_{L^{1}_{l}}. \nonumber
\end{eqnarray}
Using the same technique as in the previous step, by defining $y^{n}(t_{n}) = e^{-K_{3}t_{n}}||h^{n}(t_{n})||_{L^{1}_{l}}$ and $x^{n}(t_{n}) = \int_{0}^{t_{n}}e^{-K_{3}s}||g(s)||_{L^{1}}||h^{n}(s)||_{L^{1}_{l+\gamma}}ds$, for $n \geq 1$ and $t_{n} \in [0, t]$.
Then for $n \geq 2$, we derive that
\beno
y^{n}(t_{n})  + (a+\delta)x^{n}(t_{n})&\leq& a x^{n-1}(t_{n}) +\int_{0}^{t_{n}}(K_{1}||g(s)||_{L^{1}_{l+\gamma}}+K_{2})y^{n-1}(s)ds,
\eeno
where we have used the initial condition $h^{n}(0) = 0$.
Now denote $S^{n}(s) = \sum_{i = 0}^{n-1} (\frac{a}{a+\delta})^{i} y^{n-i}(s)$ for $n \geq 1$ and $s \in [0, t]$, by recursive derivation, we obtain
\beno
S^{n}(t_{n})  + (a+\delta)x^{n}(t_{n})&\leq& (\frac{a}{a+\delta})^{n-1}y^{1}(t_{n}) + (\frac{a}{a+\delta})^{n-2}a x^{1}(t_{n}) \\&&+\int_{0}^{t}(K_{1}||g(t_{n-1})||_{L^{1}_{l+\gamma}}+K_{2})S^{n-1}(t_{n-1})dt_{n-1}.
\eeno
By previous estimates (\ref{fnl1luniform}), we have
\beno
\sup_{0 \leq t_{n} \leq t} \{y^{1}(t_{n})+x^{1}(t_{n})\} &\leq& C(||f_{0}||_{L^{1}_{l+\gamma}},t, \sup_{0 \leq s \leq t} ||g(s)||_{L^{1}_{l}}, \int_{0}^{t}||g(s)||_{L^{1}_{l+\gamma}}ds)\\
&\eqdefa& C(t). \eeno
For ease of notation, for $n \geq 1$, let us define
\beno
b^{n}(t) = ((\frac{a}{a+\delta})^{n-1} + (\frac{a}{a+\delta})^{n-2}a)C(t)
.\eeno
Thus, by further recursive derivation, for any $t_{n} \in [0,t]$, we obtain
\beno
&&S^{n}(t_{n})  + (a+\delta)x^{n}(t_{n}) \\&\leq& b^{n}(t)+\int_{0}^{t_{n}}(K_{1}||g(t_{n-1})||_{L^{1}_{l+\gamma}}+K_{2})S^{n-1}(t_{n-1})dt_{n-1}\\
&\leq& \sum_{i=2}^{n} b^{i}(t)\frac{(\int_{0}^{t_{n}}(K_{1}||g(s)||_{L^{1}_{l+\gamma}}+K_{2})ds)^{n-i}}{(n-i)!} \\
&&+\int_{0}^{t_{n}}\int_{0}^{t_{n-1}}\cdots\int_{0}^{t_{2}} S^{1}(t_{1})\prod_{i=1}^{n-1}(K_{1}||g(t_{i})||_{L^{1}_{l+\gamma}}+K_{2}) dt_{n-1}dt_{n-2}\cdots dt_{1} \\
&\leq& \sum_{i=1}^{n} b^{i}(t)\frac{(\int_{0}^{t_{n}}(K_{1}||g(s)||_{L^{1}_{l+\gamma}}+K_{2})ds)^{n-i}}{(n-i)!},
\eeno
where we used the fact $S^{1}(t_{1}) \leq C(t) \leq (a+\delta+1)C(t) = b^{1}(t)$. Note that $b^{n}(t)$ is a geometric sequence and $b^{n}(t) = b^{1}(t)(\frac{a}{a+\delta})^{n-1}$ for any $n \geq 1$, thus we have
\beno
S^{n}(t_{n})  + (a+\delta)x^{n}(t_{n})
&\leq& b^{1}(t) \sum_{i=1}^{n} (\frac{a}{a+\delta})^{i-1} \frac{(\int_{0}^{t_{n}}(K_{1}||g(s)||_{L^{1}_{l+\gamma}}+K_{2})ds)^{n-i}}{(n-i)!} \\
&=& b^{1}(t)(\frac{a}{a+\delta})^{n-1}\sum_{i=1}^{n} \frac{(\frac{a+\delta}{a}\int_{0}^{t_{n}}(K_{1}||g(s)||_{L^{1}_{l+\gamma}}+K_{2})ds)^{n-i}}{(n-i)!}
\\&\leq& b^{1}(t)(\frac{a}{a+\delta})^{n-1} \exp(\frac{a+\delta}{a}\int_{0}^{t_{n}}(K_{1}||g(s)||_{L^{1}_{l+\gamma}}+K_{2})ds).
\eeno
By recalling the definitions of $S^{n}$ and $x^{n}$, we arrive at
\beno
&&\sup_{0\leq s \leq t}||h^{n}(s)||_{L^{1}_{l}}  + \int_{0}^{t}e^{K_{3}(t-s)}||g(s)||_{L^{1}}||h^{n}(s)||_{L^{1}_{l+\gamma}}ds
\\ &\leq& b^{1}(t)(\frac{a}{a+\delta})^{n-1} \exp(\frac{a+\delta}{a}\int_{0}^{t}(K_{1}||g(s)||_{L^{1}_{l+\gamma}}+K_{2})ds+K_{3}t).
\eeno
Since the series $\sum_{n}(\frac{a}{a+\delta})^{n-1}$ is finite, we conclude that $\{f^{n}\}_{n \in \N}$ is a Cauchy sequence in $L^{\infty}([0,t]; L^{1}_{l})\cap L^{1}([0,t]; L^{1}_{l+\gamma})$. Due to the arbitrariness of $t \in [0, T]$, there is a function $f \in L^{\infty}([0,T]; L^{1}_{l})\cap L^{1}([0,T]; L^{1}_{l+\gamma})$ such that
\beno
\lim_{n \rightarrow \infty} \{\sup_{0\leq s \leq T}||f^{n}(s)-f(s)||_{L^{1}_{l}}  + \int_{0}^{T}||f^{n}(s)-f(s)||_{L^{1}_{l+\gamma}}ds\} = 0
\eeno
It is obvious that $f$ is the solution to (\ref{forpositive2}). Thus the non-positivity of $f$ is ensured by the non-positivity of $f^{n}$.\\
\noindent {\textit {Step 3}:} (High Order Moments and Smoothness)\\
In this step, we prove the solution $f$ constructed in the previous step actually lies in $L^{\infty}([0,T]; L^{1}_{l+5\gamma/2+16}\cap H^5_{l+\gamma+10})\cap L^{1}([0,T]; L^{1}_{l+7\gamma/2+16})$. Let $q = l+5\gamma/2+16$. By lemma \ref{lemma1} and inequality (\ref{usfulinlowerbound}), we first have
\beno
\int_{\R^{3}}Q^{\epsilon}(g,f)(v)\langle v \rangle^{q}dv &=& \int_{\R^{6}}\int_{\SS^{2}}b^{\epsilon}(\cos\theta)|v-v_{*}|^{\gamma}g_{*}f(\langle v^{\prime} \rangle^{q} - \langle v \rangle^{q})dvdv_{*}d\sigma
\\&\leq& -\frac{C_{\epsilon}}{2} ||g||_{L^{1}_{}} ||f||_{L^{1}_{q+\gamma}} + C_{\epsilon}||g||_{L^{1}_{\gamma}} ||f||_{L^{1}_{q}} + A_{2}||g||_{L^{1}_{q+\gamma}}||f||_{L^{1}_{}} +  A_{2}2^{l}||g||_{L^{1}_{q}}||f||_{L^{1}_{q}}.
\eeno
Next, according to \cite{dv1}, one has 
\beno
\langle Q_{L}(g,h), \langle v \rangle^{q}\rangle \leq - \Lambda q ||g||_{L^{1}_{}} ||f||_{L^{1}_{l+\gamma}} + \Lambda (4q+2)q ||g||_{L^{1}_{4}} ||f||_{L^{1}_{q}}.
\eeno
Therefore we have
\beno
\frac{d}{dt}||f||_{L^{1}_{q}} + \frac{C_{\epsilon}}{2} ||g||_{L^{1}_{}} ||f||_{L^{1}_{q+\gamma}} \leq C(M,\Lambda,q)||g||_{L^{1}_{q}}||f||_{L^{1}_{q}} + A_{2}||f_{0}||_{L^{1}_{}} ||g||_{L^{1}_{q+\gamma}}.
\eeno
By Gronwall's inequality, it is not difficult to derive
\beno
\sup_{0\leq s \leq T}||f||_{L^{1}_{q}}  + \int_{0}^{T}||f(t)||_{L^{1}_{q+\gamma}}dt \leq C(||f_{0}||_{L^{1}_{q}}, \sup_{0\leq t \leq T}||f||_{L^{1}_{q}},   \int_{0}^{T}||f(t)||_{L^{1}_{q+\gamma}}dt).
\eeno
Recalling the uniform estimate (\ref{hnhmluniform}) and the convergence of $\{f^{n}\}_{n \in \N}$ in $L^{\infty}([0,T]; L^{1}_{l})$, we also have $f \in L^{\infty}([0,T]; H^5_{l+\gamma+10})$.

\noindent {\textit {Step 4}:} (Uniqueness)\\
Suppose $f^{1}, f^{2} \in L^{\infty}([0,T]; L^{1}_{l+5\gamma/2+16}\cap H^5_{\gamma+10+l})\cap L^{1}([0,T]; L^{1}_{l+7\gamma/2+16})$ are two non-negative solutions of equation (\ref{forpositive2}), set $h = f^{1}-f^{2}$. Then $h$ is a solution to the following equation,
\begin{equation}\label{linearzeroinitial} \left\{ \begin{aligned}
&\partial_{t}h = Q^{\epsilon}(g,h) + \epsilon^{2-2s}Q_{L}(g,h)\\
&h|_{t=0} = 0.
\end{aligned} \right.
\end{equation}
Observe that the above equation is as the same as the equation (\ref{diffequation1}) if $h^{n-1} = h^{n}$. With the same argument until inequality (\ref{hnhnminus1}), we have
\beno
\frac{d}{dt}||h||_{L^{1}_{l}} + C_{1}||h^{n}||_{L^{1}_{l+\gamma}} &\leq&  C_{2}||h||_{L^{1}_{l}},
\eeno
where $C_{1}$ and $C_{2}$ are some positive constants depending on $M$ and $m$. Then we have
\beno
||h(t)||_{L^{1}_{l}} \leq ||h(0)||_{L^{1}_{l}} e^{C_{2}t},
\eeno
which gives the uniqueness.
\end{proof}

\subsection{First result on the well-posedness of approximate equation \eqref{approximated}} Based on the Picard iteration scheme, we derive that

\begin{lem}\label{lemmapositive3} Let $l \geq 4$ be a real number and $N$ be an nonnegative integer. Let $w_{H}, w_{L}, w$ be functions defined by
\begin{eqnarray}\label{functionwh}
w_{H}(N,l) = \max \{w(N,l)+3\gamma/2+4, 2l + 3 +\gamma/2\},
\end{eqnarray}
\begin{eqnarray}\label{functionwl}
w_{L}(N,l) &=&\max\{q(2,w(N,l)+\gamma+4),q(N,2l+3),q(N+1,l+\gamma/2+2)\}
\\&&+\gamma, \nonumber
\end{eqnarray}
\begin{eqnarray}\label{functionw}
w(N,l) = \frac{(N+s+2)(2l+3)-(N+2)(l+\gamma/2)}{s}.
\end{eqnarray}
 Suppose the non-negative datum $f_{0} \in H^{(N+2)\vee3}_{w_{H}(N,l)} \cap L^{1}_{w_{L}(N,l)}$ with $||f_{0}||_{L^{1}} > 0$,
then our approximate equation \eqref{approximated}
admits a non-negative solution $f$ in $L^{\infty}([0,T^{*}]; H^{N}_{l}\cap L^{1}_{w(N,l)})$ for some $T^{*} > 0$. Moreover, if $N \geq 2$ and $l \geq 8+\gamma$, the solution is unique.
\end{lem}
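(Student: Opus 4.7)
The proof will proceed by a Picard iteration scheme built on the linear well-posedness result of Lemma 3.2. I would define $f^0(t) \equiv f_0$ and, for each $n \geq 0$, let $f^{n+1}$ be the unique non-negative solution provided by Lemma 3.2 of the linear problem
\[
\partial_t f^{n+1} = Q^\epsilon(f^n, f^{n+1}) + \epsilon^{2-2s} Q_L(f^n, f^{n+1}), \qquad f^{n+1}|_{t=0} = f_0.
\]
Non-negativity of each $f^{n+1}$ is guaranteed inductively by Lemma 3.2, and the hypothesis $\|f_0\|_{L^1}>0$ together with conservation of mass keeps $\|f^n(t)\|_{L^1}$ bounded below uniformly in $n$. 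The upper-bound assumption on the coefficient $g$ in Lemma 3.2 is exactly what must be propagated uniformly in $n$ on some short time interval.

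The main step is a uniform a priori estimate on $[0,T^*]$. For the moment control, I would test the equation for $f^{n+1}$ against $\langle v\rangle^{2w(N,l)}$ and split the contribution of $M^\epsilon$ using the coercivity estimate of Theorem 2.2 applied to $g=f^n$ (to gain a weighted gradient term with coefficient bounded below by $C_1(f^n)$) and the commutator estimate of Theorem 2.5 together with the lower bound \eqref{usfulinlowerbound} of Lemma 3.1 to absorb the terms of the form $\|f^n\|_{L^1_{w(N,l)+\gamma}}$. The recursive formula \eqref{functionw} is designed precisely so that, at each iteration, the "weight-increased" term can be absorbed on the left-hand side with a coefficient strictly less than $1$, mirroring the treatment in Step 1.1 of Lemma 3.2. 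For the $H^N_l$ smoothness, I would differentiate $\partial_v^\alpha$ with $|\alpha|\le N$, distribute the derivatives among the two arguments of $M^\epsilon$, and invoke Theorem 2.1 and Theorem 2.5 with exponents $a_1=s$, $a_2=s$ on the "non-highest" derivative terms and the coercivity of Theorem 2.2 on the highest-derivative term; interpolation between $L^1_{w_L(N,l)}$ and $H^{N+1}_{l+\gamma/2}$ closes the estimate. Collecting these bounds yields an ODI of Riccati type for $Y(t) = \|f^n(t)\|_{H^N_l}^2 + \|f^n(t)\|_{L^1_{w(N,l)}}$ whose solution stays bounded on $[0,T^*]$ for some $T^*=T^*(\|f_0\|_{H^N_l},\|f_0\|_{L^1_{w(N,l)}})>0$, uniformly in $n$.

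Next I would prove that $\{f^n\}$ is Cauchy in $L^\infty([0,T^*];L^1_l)$. Setting $h^{n+1}=f^{n+1}-f^n$, the difference satisfies
\[
\partial_t h^{n+1} = M^\epsilon(h^n,f^{n+1}) + M^\epsilon(f^n, h^{n+1}).
\]
Testing against $\mathrm{sgn}(h^{n+1})\langle v\rangle^{2l}$, using the same decomposition as in Step 2 of Lemma 3.2 (positive part dominated by upper bound, negative part providing a coercive gain from \eqref{usfulinlowerbound} and from the $\Lambda$-contribution of $Q_L$ via the grazing limit of $Q_\lambda$ as $\lambda\to0$), and using the uniform estimates from the previous step to control the $M^\epsilon(h^n,f^{n+1})$ term by $\|h^n\|_{L^1_{l+\gamma}}$ times a uniformly bounded factor, one obtains a contraction in a geometric series with ratio $a/(a+\delta)<1$. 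The limit $f$ inherits non-negativity from each $f^n$ and, by the uniform $H^N_l\cap L^1_{w(N,l)}$ bounds, lies in the claimed space. Passing to the limit in the weak formulation of the equation shows that $f$ solves the approximate equation.

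The main obstacle will be the \emph{weight-increase} phenomenon inherent in the hard potential case $\gamma>0$: at each Picard step the natural estimate on $f^{n+1}$ in $H^N_l$ requires control of $f^n$ in a weighted space of weight essentially $w(N,l)+\gamma/2$, and if this increase is not carefully balanced the scheme collapses. The purpose of the peculiar formula \eqref{functionw} for $w(N,l)$ and of the auxiliary weights $w_H,w_L$ in \eqref{functionwh}--\eqref{functionwl} is to exhibit a fixed weighted space stable under the iteration, so that the coefficient in front of the "increased-weight" term on the right-hand side of the energy inequality is strictly less than the coercive coefficient on the left. Finally, uniqueness under the stronger assumption $N\geq 2$, $l\geq 8+\gamma$ follows by repeating the Cauchy argument on the difference $h=f_1-f_2$ of two solutions: the equation satisfied by $h$ is linear in $h$ with coefficients controlled by the uniform bounds, giving $\tfrac{d}{dt}\|h\|_{L^1_l}\leq C\|h\|_{L^1_l}$ (after the weight-increase term is absorbed using the coercivity gain), whence $h\equiv 0$ by Gronwall and the zero initial condition.
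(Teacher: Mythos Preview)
Your overall architecture---Picard iteration via Lemma~\ref{lemmapositive2}, uniform bounds on a short interval, Cauchy in $L^1_l$, then upgrade to $H^N_l$, uniqueness by the same difference argument---matches the paper's. But there is a genuine gap in how you propose to obtain the uniform $L^1$ moment bound.

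You say you would test against $\langle v\rangle^{2w(N,l)}$ and invoke the coercivity estimate of Theorem~\ref{coercb}. That theorem, however, is an $L^2$-type statement: it bounds $-\langle M^\epsilon(g,f),f\rangle$ from below, and hence requires the test function to be the solution itself (possibly times a weight). When you test against the pure weight $\langle v\rangle^{2l}$, the pairing is $\langle M^\epsilon(f^n,f^{n+1}),\langle v\rangle^{2l}\rangle$ and Theorem~\ref{coercb} does not apply; there is no ``weighted gradient gain'' here. The paper obtains the needed $L^1$ moment gain from a different source: the pointwise expansion of Proposition~\ref{lemma1}, whose leading term $-\langle v\rangle^{2l}(1-\cos^{2l}\tfrac\theta2)$, combined with the lower bound $|v-v_*|^\gamma\ge \tfrac34\langle v\rangle^\gamma - c_1\langle v_*\rangle^\gamma$, yields inequality~\eqref{fnl1lpartial1}. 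In that inequality the coercive coefficient $\tfrac98 A^\epsilon_2\|f_0\|_{L^1}$ on $\|f^{n}\|_{L^1_{l+\gamma}}$ strictly dominates the feedback coefficient $A^\epsilon_2\|f_0\|_{L^1}$ on $\|f^{n-1}\|_{L^1_{l+\gamma}}$. This is not a Riccati ODI closed at level $n$: it couples level $n$ to level $n-1$, and the paper closes it by forming the geometric sum $\sum_{k}(\tfrac89)^{n-k}\|f^k\|_{L^1_l}$ and running an explicit induction on $[0,T^*(l)]$ (see~\eqref{ubl1claim}). The same mechanism---Proposition~\ref{lemma1}, not Theorem~\ref{coercb}---drives the $L^1$ Cauchy estimate and the uniqueness step.

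One small slip: in your difference equation the second term should be $M^\epsilon(h^n,f^n)$, not $M^\epsilon(h^n,f^{n+1})$.
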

\begin{proof} Consider the sequence of functions $\{f^{n}\}_{n \in \N}$ defined by
\begin{equation}\label{sequencetooriginal} \left\{ \begin{aligned}
&f^{0}(t) = f_{0}, ~~ \text{for any }  t \geq 0 ; \\
&\partial_{t}f^{n} = Q^{\epsilon }(f^{n-1},f^{n}) + \epsilon^{2-2s}Q_{L}(f^{n-1},f^{n}), ~~ n \geq 1,\\
&f^{n}|_{t=0} = f_{0}.
 \end{aligned} \right.
\end{equation}
We first mention that equation (\ref{sequencetooriginal}) conserves mass, that is, $||f^{n}(t)||_{L^{1}} = ||f_{0}||_{L^{1}}$ for any $n \geq 0$ and $ t \geq 0$. By previous lemma, $f^{n} \geq 0$ for any $n \in \N$. \\
\noindent {\textit {Step 1}:} (Uniform $L^{1}_{l}$ Upper Bound)\\
In this step we prove that $\{f^{n}\}_{n}$ has uniform upper bound in $L^{\infty}([0,T^{*}(l)]; L^{1}_{l})$ with respect to $n$ for some $T^{*}(l) > 0$ if $f_{0} \in L^{1}_{l+\gamma}$.
Thanks to proposition \ref{lemma1}, for any $ l\geq 4$, we have
\beno
\langle Q^{\epsilon }(f^{n-1},f^{n}), \langle v \rangle^{l} \rangle
&=& \int_{\R^{6}}\int_{\SS^{2}}b^{\epsilon}|v-v_{*}|^{\gamma}f^{n-1}_{*}f^{n}(\langle v^{\prime} \rangle^{l}-\langle v \rangle^{l})dvdv_{*}d\sigma \\
&\leq& -\int_{\R^{6}}\int_{\SS^{2}}b^{\epsilon}|v-v_{*}|^{\gamma}f^{n-1}_{*}f^{n}\langle v \rangle^{l}(1-\cos^{l}\frac{\theta}{2})dvdv_{*}d\sigma  \\
&&+ \int_{\R^{6}}\int_{\SS^{2}}b^{\epsilon}|v-v_{*}|^{\gamma}f^{n-1}_{*}f^{n}\langle v_{*} \rangle^{l}\sin^{l}\frac{\theta}{2}dvdv_{*}d\sigma \\
&&+ C(l)\int_{\R^{6}}\int_{\SS^{2}}b^{\epsilon}|v-v_{*}|^{\gamma}f^{n-1}_{*}f^{n}\langle v \rangle^{l-2}\langle v_{*} \rangle^{l-2}\sin^{2}\frac{\theta}{2}dvdv_{*}d\sigma.
\eeno
In the following, denote $A^{\epsilon}_{2} = \int_{\SS^2}b^{\epsilon}\sin^{2}\frac{\theta}{2} d\sigma \leq \frac{A_{2}}{2}$, then we have
\beno
\int_{\SS^2}b^{\epsilon}(\cos\theta)(1-\cos^{l}\frac{\theta}{2}) d\sigma \geq \frac{3}{2}A^{\epsilon}_{2},
\eeno
and
\beno
\int_{\SS^2}b^{\epsilon}(\cos\theta)\sin^{l}\frac{\theta}{2} d\sigma \leq \frac{1}{2}A^{\epsilon}_{2},
\eeno
where we used $1-\cos^{l}\frac{\theta}{2} \geq 1-\cos^{4}\frac{\theta}{2} \geq \frac{3}{2}\sin^{2}\frac{\theta}{2}$ and $\sin^{l}\frac{\theta}{2} \leq \frac{1}{2}\sin^{2}\frac{\theta}{2}$. Together with $|v-v_{*}|^{\gamma}\geq \frac{3}{4}\langle v \rangle^{\gamma} - c_{1}\langle v_{*} \rangle^{\gamma}$,
$|v-v_{*}|^{\gamma}\leq 2(\langle v \rangle^{\gamma} + \langle v_{*} \rangle^{\gamma})$, and $|v-v_{*}|^{\gamma}\leq \langle v \rangle^{\gamma}  \langle v_{*} \rangle^{\gamma}$, we obtain
\begin{eqnarray}\label{lastfnl1lqep}
\langle Q^{\epsilon }(f^{n-1},f^{n}), \langle v \rangle^{l} \rangle
&\leq& -\frac{9}{8}A^{\epsilon}_{2}||f^{n-1}||_{L^{1}_{}}||f^{n}||_{L^{1}_{l+\gamma}}+\frac{3}{2}c_{1}A^{\epsilon}_{2}||f^{n-1}||_{L^{1}_{\gamma}}||f^{n}||_{L^{1}_{l}} \\
&& + A^{\epsilon}_{2}||f^{n-1}||_{L^{1}_{l+\gamma}}||f^{n}||_{L^{1}_{}} + A^{\epsilon}_{2}||f^{n-1}||_{L^{1}_{l}}||f^{n}||_{L^{1}_{\gamma}} \nonumber \\&&+ C(l)A^{\epsilon}_{2}||f^{n-1}||_{L^{1}_{l}}||f^{n}||_{L^{1}_{l}}.
\nonumber
\end{eqnarray}
Recalling (\ref{fnl1lthirdterm}), we have
\begin{eqnarray}\label{lastfnl1lql}
\langle Q_{L}(f^{n-1},f^{n}), \langle v \rangle^{l} \rangle
&\leq& -l\Lambda||f^{n-1}||_{L^{1}}||f^{n}||_{L^{1}_{l+\gamma}} + (4l+2)l\Lambda||f^{n-1}||_{L^{1}_{4}}||f^{n}||_{L^{1}_{l}}.
\end{eqnarray}
With (\ref{lastfnl1lqep}) and (\ref{lastfnl1lql}) in hand, we have
\begin{eqnarray}\label{fnl1lpartial1}
&&\frac{d}{dt}||f^{n}||_{L^{1}_{l}} + \frac{9}{8}A^{\epsilon}_{2}||f_{0}||_{L^{1}}||f^{n}||_{L^{1}_{l+\gamma}} \\&\leq& A^{\epsilon}_{2}||f_{0}||_{L^{1}}||f^{n-1}||_{L^{1}_{l+\gamma}} +
C(\epsilon,l,\Lambda)||f^{n-1}||_{L^{1}_{l}}||f^{n}||_{L^{1}_{l}}, \nonumber
\end{eqnarray}
where we denote $C(\epsilon,l,\Lambda) = A^{\epsilon}_{2}+\frac{3}{2}c_{1}A^{\epsilon}_{2}+C(l)A^{\epsilon}_{2}+ \epsilon^{2-2s}(4l+2)l\Lambda$. For simplicity, denote $m(l) = ||f_{0}||_{L^{1}_{l}}$.
For any $n \in \N$ and $l \geq 0$, define
\beno
T^{*}(l) = \frac{\min\{\log\{\frac{11C(\epsilon,l,\Lambda)m^{2}(l)}{A^{\epsilon}_{2}m(0)m(l+\gamma)}+1\}, \log(10/9)\}}{{11C(\epsilon,l,\Lambda)m(l)}},
\eeno
and
\beno
C_{n,l} = \sup_{0 \leq t \leq T^{*}(l)}\sup_{0 \leq k \leq n} ||f^{k}(t)||_{L^{1}_{l}}.
\eeno
We claim that for any $n \in \N$,
\begin{eqnarray}\label{ubl1claim}
C_{n,l} \leq 11m(l).
\end{eqnarray}
We will prove (\ref{ubl1claim}) by induction. First, it is obvious $C_{0,l} \leq 11m(l)$. Next, fix a $n \geq 1$, suppose $C_{n-1,l} \leq 11m(l)$, then on the interval $[0, T^{*}(l)]$, for any $1 \leq k \leq n$, from (\ref{fnl1lpartial1}), we have
\begin{eqnarray}\label{fnl1lpartial2}
&&\frac{d}{dt}||f^{k}||_{L^{1}_{l}} + \frac{9}{8}A^{\epsilon}_{2}m(0)||f^{k}||_{L^{1}_{l+\gamma}}  \\&\leq& A^{\epsilon}_{2}m(0)||f^{k-1}||_{L^{1}_{l+\gamma}} +
11C(\epsilon,l,\Lambda)m(l)||f^{k}||_{L^{1}_{l}}, \nonumber
\end{eqnarray}
Thus for any $t \in [0, T^{*}(l)]$ and $1 \leq k \leq n$, we derive that
\begin{eqnarray}\label{fkl1llintegral}
e^{-11C(\epsilon,l,\Lambda)m(l)t}||f^{k}(t)||_{L^{1}_{l}} + \frac{9}{8}A^{\epsilon}_{2}m(0)\int_{0}^{t}e^{-11C(\epsilon,l,\Lambda)m(l)s}||f^{k}(s)||_{L^{1}_{l+\gamma}}ds \nonumber \\
\leq A^{\epsilon}_{2}m(0)\int_{0}^{t}e^{-11C(\epsilon,l,\Lambda)m(l)s}||f^{k-1}(s)||_{L^{1}_{l+\gamma}}ds +
m(l). \nonumber
\end{eqnarray}
Multiplying the above inequality by $(\frac{8}{9})^{n-k}$ and taking sum over $1\leq k \leq n$, we obtain
\beno
e^{-11C(\epsilon,l,\Lambda)m(l)t}\sum_{k=1}^{n}(\frac{8}{9})^{n-k}||f^{k}(t)||_{L^{1}_{l}}+\frac{9}{8}A^{\epsilon}_{2}m(0)
\int_{0}^{t}e^{-11C(\epsilon,l,\Lambda)m(l)s}||f^{n}(s)||_{L^{1}_{l+\gamma}}ds \\
\leq (\frac{8}{9})^{n-1}A^{\epsilon}_{2}m(0)m(l+\gamma)\frac{1-e^{-11C(\epsilon,l,\Lambda)m(l)t}}{11C(\epsilon,l,\Lambda) m(l)} + m(l)\sum_{k=1}^{n}(\frac{8}{9})^{n-k}.
\eeno
Observing that $\sum_{k=1}^{n}(\frac{8}{9})^{n-k} \leq 9$, we arrive at
\beno
\sum_{k=1}^{n}(\frac{8}{9})^{n-k}||f^{k}(t)||_{L^{1}_{l}}+\frac{9}{8}A^{\epsilon}_{2}m(0)
\int_{0}^{t}e^{11C(\epsilon,l,\Lambda)m(l)(t-s)}||f^{n}(s)||_{L^{1}_{l+\gamma}}ds \\
\leq A^{\epsilon}_{2}m(0)m(l+\gamma)||f_{0}||_{L^{1}_{l+\gamma}}\frac{e^{11C(\epsilon,l,\Lambda)m(l)t}-1}{11C(\epsilon,l,\Lambda)m(l)} + 9m(l)e^{11C(\epsilon,l,\Lambda)m(l)t}.
\eeno
Thus we have
\beno
\sup_{0 \leq t \leq T^{*}(l)}||f^{n}(t)||_{L^{1}_{l}} &\leq& A^{\epsilon}_{2}m(0)||f_{0}||_{L^{1}_{l+\gamma}}\frac{e^{11C(\epsilon,l,\Lambda)m(l)T^{*}(l)}-1}{11C(\epsilon,l,\Lambda)m(l)} + 9||f_{0}||_{L^{1}_{l}}e^{11C(\epsilon,l,\Lambda)m(l)T^{*}(l)} \\&\leq& 11m(l),
\eeno
by the definition of $T^{*}(l)$. That is, $C_{n} \leq 11m(l)$. Therefore the claim  (\ref{ubl1claim}) is proved, which impiles
\begin{eqnarray}\label{fnuniform11ml}
 \sup_{0 \leq t \leq T^{*}(l)}\sup_{n \geq 0} ||f^{n}(t)||_{L^{1}_{l}} \leq 11m(l).
\end{eqnarray}

\noindent {\textit {Step 2}:} (Uniform $H^{N}_{l}$ Upper Bound)\\
In this step, we shall use the energy estimate to get the uniform
upper bound of $L^{N}_{l}$ norm of $f^{n}$ with respect to $n$. Fix an $\alpha$ with $|\alpha| \leq N$, one has
\beno
\partial_{t}\partial^{\alpha}_{v}f^{n} = \sum_{\alpha_{1}+\alpha_{2}=\alpha} \binom{\alpha}{\alpha_{1}}[Q^{\epsilon}(\partial^{\alpha_{1}}_{v}f^{n-1},\partial^{\alpha_{2}}_{v}f^{n})
+\epsilon^{2-2s}Q_{L}(\partial^{\alpha_{1}}_{v}f^{n-1},\partial^{\alpha_{2}}_{v}f^{n})].
\eeno

As before, we have
\beno
&&\langle  M^{\epsilon}(f^{n-1},\partial^{\alpha}_{v}f^{n}), \partial^{\alpha}_{v}f^{n} \langle v \rangle^{2l} \rangle \\&=&
\langle M^{\epsilon}(f^{n-1},\partial^{\alpha}_{v}f^{n}\langle v \rangle^{l}), \partial^{\alpha}_{v}f^{n} \langle v \rangle^{l} \rangle \nonumber \\&&+\{
\langle  M^{\epsilon}(f^{n-1},\partial^{\alpha}_{v}f^{n})\langle v \rangle^{l} -M^{\epsilon}(f^{n-1},\partial^{\alpha}_{v}f^{n}\langle v \rangle^{l}), \partial^{\alpha}_{v}f^{n} \langle v \rangle^{l} \rangle\}
\eeno

By coercivity estimate (\ref{coercivityboltz}) and commutator estimates (\ref{commcboltz1}), (\ref{commlandau}), we have
\begin{eqnarray}\label{goodtermbycoer}
&&\langle  M^{\epsilon}(f^{n-1},\partial^{\alpha}_{v}f^{n}), \partial^{\alpha}_{v}f^{n} \langle v \rangle^{2l} \rangle + C_{1}(f_{0})||\partial^{\alpha}_{v}f^{n}||^{2}_{\epsilon,l+\gamma/2} \\&\lesssim&
C_{2}(f_{0})||\partial^{\alpha}_{v}f^{n}||^{2}_{L^{2}_{l+\gamma/2}}
 +||f^{n-1}||_{L^{1}_{2l+1}}||\partial^{\alpha}_{v}f^{n}||_{\epsilon,l+\gamma/2}
||\partial^{\alpha}_{v}f^{n}||_{L^{2}_{l+\gamma/2}}. \nonumber
\end{eqnarray}
By upper bound estimate (\ref{upcboltz}) and commutator estimates (\ref{commcboltz1}), (\ref{commlandau}), for $|\alpha_{2}| \leq N-1$, we have
\begin{eqnarray}\label{btbyupperandcommu}
&&\langle  M^{\epsilon}(\partial^{\alpha_{1}}_{v}f^{n-1},\partial^{\alpha_{2}}_{v}f^{n}), \partial^{\alpha}_{v}f^{n} \langle v \rangle^{2l} \rangle \\&=&
\langle M^{\epsilon}(\partial^{\alpha_{1}}_{v}f^{n-1},\partial^{\alpha_{2}}_{v}f^{n}\langle v \rangle^{l}), \partial^{\alpha}_{v}f^{n} \langle v \rangle^{l} \rangle \nonumber \\ &&+\{
\langle  M^{\epsilon}(\partial^{\alpha_{1}}_{v}f^{n-1},\partial^{\alpha_{2}}_{v}f^{n})\langle v \rangle^{l}, \partial^{\alpha}_{v}f^{n} \langle v \rangle^{l} \rangle  \nonumber \\
&& -\langle M^{\epsilon}(\partial^{\alpha_{1}}_{v}f^{n-1},\partial^{\alpha_{2}}_{v}f^{n}\langle v \rangle^{l}), \partial^{\alpha}_{v}f^{n} \langle v \rangle^{l} \rangle\} \nonumber\\
&\lesssim& ||\partial^{\alpha_{1}}_{v}f^{n-1}||_{L^{1}_{\gamma+2}}||\partial^{\alpha_{2}}_{v}f^{n}||_{H^{s}_{l+\gamma/2+2}}
||\partial^{\alpha}_{v}f^{n}||_{H^{s}_{l+\gamma/2}} \nonumber\\&&
+\epsilon^{2-2s}||\partial^{\alpha_{1}}_{v}f^{n-1}||_{L^{1}_{\gamma+2}}||\partial^{\alpha_{2}}_{v}f^{n}||_{H^{1}_{l+\gamma/2+2}}
||\partial^{\alpha}_{v}f^{n}||_{H^{1}_{l+\gamma/2}} \nonumber\\
&&+||\partial^{\alpha_{1}}_{v}f^{n-1}||_{L^{1}_{2l+1}}||\partial^{\alpha_{2}}_{v}f^{n}||_{H^{s}_{l+\gamma/2}}
||\partial^{\alpha}_{v}f^{n}||_{L^{2}_{l+\gamma/2}} \nonumber\\&&+ \epsilon^{2-2s}||\partial^{\alpha_{1}}_{v}f^{n-1}||_{L^{1}_{\gamma+3}}||\partial^{\alpha_{2}}_{v}f^{n}||_{H^{1}_{l+\gamma/2}}
||\partial^{\alpha}_{v}f^{n}||_{L^{2}_{l+\gamma/2}}. \nonumber
\end{eqnarray}

When $N = 0$, by (\ref{goodtermbycoer}), we have
\beno
&&\langle  M^{\epsilon}(f^{n-1},f^{n}), f^{n} \langle v \rangle^{2l} \rangle + \frac{C_{1}(f_{0})}{2}||f^{n}||^{2}_{\epsilon,l+\gamma/2}
\\&\lesssim& C_{2}(f_{0})||f^{n}||^{2}_{L^{2}_{l+\gamma/2}}+\frac{1}{C_{1}(f_{0})}||f^{n-1}||^{2}_{L^{1}_{2l+1}}
||f^{n}||^{2}_{L^{2}_{l+\gamma/2}}.
\eeno
By (\ref{fnuniform11ml}), there holds
\beno
\sup_{0 \leq t \leq T^{*}(2l+1)}\sup_{n \geq 0}||f^{n}(t)||_{L^{1}_{2l+1}} \leq 11m(2l+1),
\eeno
so we have
\beno
\frac{d}{dt}||f^{n}||^{2}_{L^{2}_{l}} + C_{1}(f_{0})||f^{n}||^{2}_{\epsilon,l+\gamma/2} \lesssim C(||f_{0}||_{L^{1}_{2l+1}}, ||f_{0}||_{L\log L})  ||f^{n}||^{2}_{L^{2}_{l+\gamma/2}}.
\eeno
Thanks to the fact
\beno
||f^{n}||^{2}_{L^{2}_{l+\gamma/2}} \leq \eta||f^{n}||^{2}_{H^{s}_{l+\gamma/2}} + C(\eta) ||f^{n}||^{2}_{L^{1}_{l+\gamma/2}},
\eeno
we have
\beno
\frac{d}{dt}||f^{n}||^{2}_{L^{2}_{l}} + \frac{C_{1}(f_{0})}{2}||f^{n}||^{2}_{\epsilon,l+\gamma/2} \leq C(||f_{0}||_{L^{1}_{2l+1}}, ||f_{0}||_{L\log L}).
\eeno
By Gronwall's inequality, we obtain
\beno
\sup_{0 \leq t \leq T^{*}(2l+1)}||f^{n}(t)||_{L^{2}_{l}} + \int^{T^{*}(2l+1)}_{0}||f^{n}(s)||^{2}_{\epsilon,l+\gamma/2}ds \leq C(||f_{0}||_{L^{1}_{2l+1}}, ||f_{0}||_{L^{2}_{l}}).
\eeno



With the help of uniform $L^{2}_{l}$ norm and the above inequality,  we can prove in a similar manner as in the second step in the proof of theorem \ref{main2},
\beno
\sup_{0 \leq t \leq T^{*}(\phi(s,l))}||f^{n}(t)||_{H^{s}_{l}} \leq C(||f_{0}||_{L^{1}_{\phi(s,l)}}, ||f_{0}||_{H^{s}_{l}}).
\eeno
where $\phi(s,l) = \frac{(2l+4)(2+s)-2l}{s}$.

Now we turn to higher order regularity. Taking into account the fact $W^{\epsilon}(\xi) \leq \langle \xi \rangle$, for the fixed $\epsilon$, by (\ref{goodtermbycoer}) and (\ref{btbyupperandcommu}), we have
\beno
\frac{d}{dt}(\frac{1}{2}||f^{n}||^{2}_{H^{1}_{l}}) + \frac{C_{1}(f_{0})}{2}\epsilon^{2-2s}||f^{n}||^{2}_{H^{2}_{l+\gamma/2}} &\lesssim& ||f^{n}||^{2}_{H^{1}_{l+\gamma/2}} + ||f^{n-1}||_{H^{1}_{2l+3}}||f^{n}||^{2}_{H^{1}_{l+\gamma/2}}  \\
&&  + ||f^{n-1}||_{H^{1}_{6}}||f^{n}||_{H^{1}_{l+2+\gamma/2}}||f^{n}||_{H^{2}_{l+\gamma/2}}.
\eeno
Thanks interpolation theory and Young's inequality, one has
\beno
||f^{n}||^{2}_{H^{1}_{l+\gamma/2}} &\leq& ||f^{n}||_{H^{2}_{l+\gamma/2}}||f^{n}||_{L^{2}_{l+\gamma/2}} \\
&\leq& \frac{C_{1}(f_{0})}{8}\epsilon^{2-2s}||f^{n}||^{2}_{H^{2}_{l+\gamma/2}} + \frac{2}{C_{1}(f_{0})\epsilon^{2-2s}}||f^{n}||^{2}_{L^{2}_{l+\gamma/2}},
\eeno
\beno
||f^{n-1}||_{H^{1}_{2l+3}}||f^{n}||^{2}_{H^{1}_{l+\gamma/2}} &\leq& ||f^{n-1}||^{1/2}_{H^{2}_{l+\gamma/2}}||f^{n-1}||^{1/2}_{L^{2}_{3l+6-\gamma/2}}
||f^{n}||_{H^{2}_{l+\gamma/2}}||f^{n}||_{L^{2}_{l+\gamma/2}} \\
&\leq& \frac{C_{1}(f_{0})}{8}\epsilon^{2-2s}||f^{n}||^{2}_{H^{2}_{l+\gamma/2}} \\&&+ \frac{2}{C_{1}(f_{0})\epsilon^{2-2s}}||f^{n-1}||_{H^{2}_{l+\gamma/2}}||f^{n-1}||_{L^{2}_{3l+6-\gamma/2}}||f^{n}||^{2}_{L^{2}_{l+\gamma/2}}
\\ &\leq& \frac{C_{1}(f_{0})}{8}\epsilon^{2-2s}||f^{n}||^{2}_{H^{2}_{l+\gamma/2}} + \frac{C_{1}(f_{0})}{32}\epsilon^{2-2s}||f^{n-1}||^{2}_{H^{2}_{l+\gamma/2}} \\ && + \frac{32}{(C_{1}(f_{0})\epsilon^{2-2s})^{3}}||f^{n-1}||^{2}_{L^{2}_{3l+6-\gamma/2}}||f^{n}||^{4}_{L^{2}_{l+\gamma/2}},
\eeno
and finally
\beno
&&||f^{n-1}||_{H^{1}_{6}}||f^{n}||_{H^{1}_{l+2+\gamma/2}}||f^{n}||_{H^{2}_{l+\gamma/2}}\\ &\leq& ||f^{n-1}||^{\frac{1-s}{2-s}}_{H^{2}_{6}}||f^{n-1}||^{\frac{1}{2-s}}_{H^{s}_{6}}
||f^{n}||^{3/2}_{H^{2}_{l+\gamma/2}}||f^{n}||^{1/2}_{L^{2}_{l+4+\gamma/2}} \\
&\leq& \frac{C_{1}(f_{0})}{8}\epsilon^{2-2s}||f^{n}||^{2}_{H^{2}_{l+\gamma/2}} \\&&+ \frac{1}{4}(\frac{32}{3C_{1}(f_{0})}\epsilon^{2-2s})^{3}||f^{n-1}||^{\frac{4(1-s)}{2-s}}_{H^{2}_{6}}||f^{n-1}||^{\frac{4}{2-s}}_{H^{s}_{6}}||f^{n}||^{2}_{L^{2}_{l+4+\gamma/2}}
\\ &\leq& \frac{C_{1}(f_{0})}{8}\epsilon^{2-2s}||f^{n}||^{2}_{H^{2}_{l+\gamma/2}} + \frac{C_{1}(f_{0})}{32}\epsilon^{2-2s}||f^{n-1}||^{2}_{H^{2}_{6}} \\ && +
\frac{1}{p}(q \eta)^{-\frac{p}{q}}(\frac{1}{4}(\frac{32}{3C_{1}(f_{0})}\epsilon^{2-2s})^{3})^{\frac{2-s}{s}}
||f^{n-1}||^{\frac{4}{s}}_{H^{s}_{6}}||f^{n}||^{\frac{2(2-s)}{s}}_{L^{2}_{l+4+\gamma/2}},
\eeno
we have used the Young's inequality (\ref{basicineq}) with $p = \frac{2-s}{s}, q = \frac{2-s}{s(1-s)}$ and $\eta = \frac{C_{1}(f_{0})}{32}\epsilon^{2-2s}$.
Thus we arrive at for any $n \geq 1$,
\beno
\frac{d}{dt}||f^{n}||^{2}_{H^{1}_{l}} + \frac{1}{4} C_{1}(f_{0})\epsilon^{2-2s}||f^{n}||^{2}_{H^{2}_{l+\gamma/2}} \leq \frac{1}{8} C_{1}(f_{0})\epsilon^{2-2s}||f^{n-1}||^{2}_{H^{2}_{l+\gamma/2}} + M,
\eeno
where $M$ is the uniform upper bound of $||f^{n}||_{H^{s}_{6}}, ||f^{n}||_{L^{2}_{l+5}}$ and $||f^{n}||_{L^{1}_{3l+6}}$ with respect to $n$ on the time interval $[0, T^{*}]$. Here $T^{*} = T^{*}(\max\{\phi(s,6), 3l+6\})$.  With the same technique as in dealing with (\ref{fnl1lpartial2}), we obtain
\beno
&&||f^{n}(t)||^{2}_{H^{1}_{l}} + \frac{1}{4} C_{1}(f_{0})\epsilon^{2-2s}\int_{0}^{t}||f^{n}(r)||^{2}_{H^{2}_{l+\gamma/2}}dr \\&\leq& \frac{1}{8} C_{1}(f_{0})\epsilon^{2-2s}||f_{0}||^{2}_{H^{2}_{l+\gamma/2}}t + 2(M t + ||f_{0}||^{2}_{H^{1}_{l}}).
\eeno
The above inequality is true for any $n \geq 1$ and $t \in [0, T^{*}]$, so we have the desired result
\beno
\sup_{n}\sup_{0 \leq t \leq T^{*}} ||f^{n}(t)||_{H^{1}_{l}} \leq C(||f_{0}||_{H^{2}_{l+\gamma/2}}, ||f_{0}||_{L^{1}_{\max\{\phi(s,6), 3l+6\}}}, T^{*}).
\eeno
Continuing the argument, there will be a function $q: \N \times \R_{+} \rightarrow \R$, such that
\begin{eqnarray}\label{hn1uniformupperbound}
\sup_{n}\sup_{0 \leq t \leq T^{*}(q(N,l))} ||f^{n}(t)||_{H^{N}_{l}} \leq C(||f_{0}||_{H^{N+1}_{l+\gamma/2}}, ||f_{0}||_{L^{1}_{q(N,l)+\gamma}}).
\end{eqnarray}

\noindent {\textit {Step 3}:} (Cauchy Sequence)\\
Now we are ready to prove $\{f^{n}\}_{n}$ is a Cauchy sequence in $L^{\infty}([0,T^{*}]; L^{1}_{l})$.
Set $h^{n} = f^{n+1} - f^{n}$ for $n \geq 0$.  Then for $n \geq 1$, $h^{n}$ is the solution to the following equation
\begin{equation}\label{differencefunction2} \left\{ \begin{aligned}
&\partial_{t}h^{n} = M^{\epsilon}(f^{n},h^{n}) + M^{\epsilon}(h^{n-1},f^{n}),\\
&h^{n}|_{t=0} = 0.
 \end{aligned} \right.
\end{equation}
As the same as (\ref{lastfnl1lqep}), we have
\begin{eqnarray}\label{hncauchyfirstterm}
\langle Q^{\epsilon }(f^{n},h^{n}), sgn(h^{n})\langle v \rangle^{l} \rangle &\leq& \langle Q^{\epsilon }(f^{n},|h^{n}|), \langle v \rangle^{l} \rangle \\
&\leq& -\frac{9}{8}A^{\epsilon}_{2}||f^{n}||_{L^{1}_{}}||h^{n}||_{L^{1}_{l+\gamma}}+\frac{3}{2}A^{\epsilon}_{2}||f^{n}||_{L^{1}_{\gamma}}||h^{n}||_{L^{1}_{l}} \nonumber \\
&& + A^{\epsilon}_{2}||f^{n}||_{L^{1}_{l+\gamma}}||h^{n}||_{L^{1}_{}} + A^{\epsilon}_{2}||f^{n}||_{L^{1}_{l}}||h^{n}||_{L^{1}_{\gamma}} \nonumber \\&&+ C(l)A^{\epsilon}_{2}||f^{n}||_{L^{1}_{l}}||h^{n}||_{L^{1}_{l}}. \nonumber
\end{eqnarray}
As the same as (\ref{hnl1lthirdterm}), we have
\begin{eqnarray}\label{hncauchysecondterm}
\langle Q_{L}(f^{n},h^{n}) ,  sgn(h^{n})\langle v \rangle^{l} \rangle_{v} &\leq& -\frac{\Lambda}{16}||f^{n}||_{L^{1}}||h^{n}||_{L^{1}_{l+\gamma}}+ \frac{\Lambda}{8}||f^{n}||_{L^{1}_{\gamma}}||h^{n}||_{L^{1}_{l}} \\&& + C(l)\Lambda||f^{n}||_{L^{1}_{l}}||h^{n}||_{L^{1}_{l}}. \nonumber
\end{eqnarray}
Applying proposition \ref{lemma1} again, we obtain
\begin{eqnarray}\label{hncauchythirdterm}
&&\langle Q^{\epsilon}(h^{n-1},f^{n}) ,  sgn(h^{n})\langle v \rangle^{l} \rangle \\&\leq& \int_{\R^{6}}\int_{\SS^{2}}b^{\epsilon}|v-v_{*}|^{\gamma}|h^{n-1}_{*}|f^{n}(\langle v^{\prime} \rangle^{l}+\langle v \rangle^{l})dvdv_{*}d\sigma \nonumber \\
&\leq& A^{\epsilon}_{2}(||f^{n}||_{L^{1}_{}}||h^{n-1}||_{L^{1}_{l+\gamma}}+||f^{n}||_{L^{1}_{\gamma}}||h^{n-1}||_{L^{1}_{l}}) \nonumber \\&&+ C(l)A^{\epsilon}_{2}||f^{n}||_{L^{1}_{l}}||h^{n-1}||_{L^{1}_{l}} \nonumber \\&&+
A^{\epsilon}||f^{n}||_{L^{1}_{l+\gamma}}||h^{n-1}||_{L^{1}_{\gamma}}. \nonumber
\end{eqnarray}
Recalling the Landau operator $Q_{L}$ can be rewritten as:
\beno
Q_{L}(g,h) = \sum_{i,j=1}^{3}(a_{ij}*g)\partial_{ij}h - (c*g)h,
\eeno
we have
\begin{eqnarray}\label{hncauchyfourthterm}
\langle Q_{L}(h^{n-1},f^{n}) ,  sgn(h^{n})\langle v \rangle^{l} \rangle &=& \sum_{i,j=1}^{3} \langle (a_{ij}*h^{n-1})\partial_{ij}f^{n}, sgn(h^{n})\langle v \rangle^{l} \rangle \\&&- \langle (c*h^{n-1})f^{n}, sgn(h^{n})\langle v \rangle^{l}\rangle \nonumber \\&\leq& \Lambda||h^{n-1}||_{L^{1}_{\gamma+2}}||f^{n}||_{H^{2}_{l+\gamma+4}} \nonumber \\&&+ 2\Lambda(\gamma+3)||h^{n-1}||_{L^{1}_{\gamma}}||f^{n}||_{L^{1}_{l+\gamma}}. \nonumber
\end{eqnarray}
Patch all together inequalities (\ref{hncauchyfirstterm}),(\ref{hncauchysecondterm}),(\ref{hncauchythirdterm}) and (\ref{hncauchyfourthterm}), we obtain
\begin{eqnarray}\label{finalforhncauchy}
&&\frac{d}{dt}||h^{n}||_{L^{1}_{l}} + \frac{9}{8}A^{\epsilon}_{2}m(0)||h^{n}||_{L^{1}_{l+\gamma}} \\
 &\leq& A^{\epsilon}_{2}m(0)||h^{n-1}||_{L^{1}_{l+\gamma}}+ K_{1}||h^{n}||_{L^{1}_{l}} + K_{2}||h^{n-1}||_{L^{1}_{l}}, \nonumber
\end{eqnarray}
where $K_{1}$ and $K_{2}$ are some constants depending at most on the uniform upper bound of $||f^{n}||_{H^{2}_{l+\gamma+4}}$, which is bounded by a constant depending on $||f_{0}||_{H^{3}_{l+3\gamma/2+4}}, ||f_{0}||_{L^{1}_{q(2,l+\gamma+4)+\gamma}}$.
With a similar argument as in the previous lemma, for any $t \in [0, T^{*}(q(2,l+\gamma+4))]$, we can conclude
\begin{eqnarray}\label{l1cauchysequence}
||h^{n}(t)||_{L^{1}_{l}} \leq (\frac{8}{9})^{n}M(t)\exp(\frac{9K_{2}t}{8}+K_{1}t),
\end{eqnarray}
where $M(t) = \frac{9}{8}A^{\epsilon}_{2}m(0)\int_{0}^{t} e^{-K_{1}s}||h^{0}(s)||_{L^{1}_{l+\gamma}}ds + 22m(l)$.
Thus $\sum_{n}||h^{n}(t)||_{L^{1}_{l}}$ is finite and $\{f^{n}(t)\}_{n \in \N}$ is a Cauchy sequence in $L^{1}_{l}$. Due to the arbitrariness of $t \in [0, T^{*}(q(2,l+\gamma+4))]$, there is a function $f \in L^{\infty}([0,T^{*}]; L^{1}_{l})$ such that
\beno
\lim_{n \rightarrow \infty} \sup_{0\leq t \leq T^{*}}||f^{n}(t)-f(t)||_{L^{1}_{l}}  = 0
\eeno

In the following, we prove  $\{f^{n}(t)\}_{n \in \N}$ is a Cauchy sequence in $H^{N}_{l}$.  \\
Fix an $\alpha$ with $|\alpha| \leq N$, one has
\beno
\partial_{t}\partial^{\alpha}_{v}h^{n} = \sum_{\alpha_{1}+\alpha_{2}=\alpha} \binom{\alpha}{\alpha_{1}}[M^{\epsilon}(\partial^{\alpha_{1}}_{v}f^{n},\partial^{\alpha_{2}}_{v}h^{n})
+M^{\epsilon}(\partial^{\alpha_{1}}_{v}h^{n-1},\partial^{\alpha_{2}}_{v}f^{n})].
\eeno
Then we have
\beno
\frac{d}{dt}(\frac{1}{2}||\partial^{\alpha}_{v}h^{n}||^{2}_{L^{2}_{l}})&=& \sum_{\alpha_{1}+\alpha_{2}=\alpha} \binom{\alpha}{\alpha_{1}}[\langle M^{\epsilon}(\partial^{\alpha_{1}}_{v}f^{n},\partial^{\alpha_{2}}_{v}h^{n}), \partial^{\alpha}_{v}h^{n} \langle v \rangle^{2l} \rangle \\
&&+\langle M^{\epsilon}(\partial^{\alpha_{1}}_{v}h^{n-1},\partial^{\alpha_{2}}_{v}f^{n}), \partial^{\alpha}_{v}h^{n} \langle v \rangle^{2l} \rangle]\\
&\eqdefa& \sum_{\alpha_{1}+\alpha_{2}=\alpha} \binom{\alpha}{\alpha_{1}} [\mathfrak{I}_{1}(\alpha_{1},\alpha_{2}) + \mathfrak{I}_{2}(\alpha_{1},\alpha_{2})].
\eeno
As the same as (\ref{goodtermbycoer}), on the time interval $[0, T^{*}(2l+1)]$, we have
\beno
&&\mathfrak{I}_{1}(0,\alpha) + C_{1}(f_{0})||\partial^{\alpha}_{v}h^{n}||^{2}_{\epsilon,l+\gamma/2} \\&\lesssim&
C_{2}(f_{0})||\partial^{\alpha}_{v}h^{n}||^{2}_{L^{2}_{l+\gamma/2}}
+||f^{n}||_{L^{1}_{2l+1}}||\partial^{\alpha}_{v}h^{n}||_{\epsilon,l+\gamma/2}
||\partial^{\alpha}_{v}h^{n}||_{L^{2}_{l+\gamma/2}},
\eeno
and thus
\beno
&&\mathfrak{I}_{1}(0,\alpha) + \frac{C_{1}(f_{0})}{2}||\partial^{\alpha}_{v}h^{n}||^{2}_{\epsilon,l+\gamma/2} \lesssim
C(||f_{0}||_{L^{1}_{2l+1}},||f_{0}||_{L^{2}})||\partial^{\alpha}_{v}h^{n}||_{L^{2}_{l+\gamma/2}}.
\eeno
As the same as (\ref{btbyupperandcommu}), for $|\alpha_{2}| \leq |\alpha|-1 \leq N-1$ and any $\eta > 0$, on the time interval $[0, T^{*}(q(N,2l+3))]$,  we have
\beno
\mathfrak{I}_{1}(\alpha_{1},\alpha_{2}) &\lesssim& ||\partial^{\alpha_{1}}_{v}f^{n}||_{L^{1}_{\gamma+2}}||\partial^{\alpha_{2}}_{v}h^{n}||_{H^{s}_{l+\gamma/2+2}}
||\partial^{\alpha}_{v}h^{n}||_{H^{s}_{l+\gamma/2}} \\&&
+\epsilon^{2-2s}||\partial^{\alpha_{1}}_{v}f^{n}||_{L^{1}_{\gamma+2}}||\partial^{\alpha_{2}}_{v}h^{n}||_{H^{1}_{l+\gamma/2+2}}
||\partial^{\alpha}_{v}h^{n}||_{H^{1}_{l+\gamma/2}} \\
&&+||\partial^{\alpha_{1}}_{v}f^{n}||_{L^{1}_{2l+1}}||\partial^{\alpha_{2}}_{v}h^{n}||_{H^{s}_{l+\gamma/2}}
||\partial^{\alpha}_{v}h^{n}||_{L^{2}_{l+\gamma/2}} \\&&+ \epsilon^{2-2s}||\partial^{\alpha_{1}}_{v}f^{n}||_{L^{1}_{\gamma+3}}||\partial^{\alpha_{2}}_{v}h^{n}||_{H^{1}_{l+\gamma/2}}
||\partial^{\alpha}_{v}h^{n}||_{L^{2}_{l+\gamma/2}}\\
&\lesssim& ||f^{n}||_{H^{N}_{2l+3}}||h^{n}||_{H^{N}_{l+\gamma/2+2}}||\partial^{\alpha}_{v}h^{n}||_{\epsilon,l+\gamma/2},
\eeno
which implies, for any $\eta > 0$,
\beno
\mathfrak{I}_{1}(\alpha_{1},\alpha_{2}) - \eta ||\partial^{\alpha}_{v}h^{n}||^{2}_{\epsilon,l+\gamma/2} \lesssim C(\eta,||f_{0}||_{H^{N+1}_{2l+3+\gamma/2}}, ||f_{0}||_{L^{1}_{q(N,2l+3)+\gamma}})||h^{n}||^{2}_{H^{N}_{l+\gamma/2+2}}.
\eeno
Similarly, on the time interval $[0, T^{*}(q(N+1,l+\gamma/2+2))]$, we have
\beno
\mathfrak{I}_{2}(\alpha_{1},\alpha_{2}) \lesssim ||h^{n-1}||_{H^{N}_{2l+3}}||f^{n}||_{H^{N+1}_{l+\gamma/2+2}}||\partial^{\alpha}_{v}h^{n}||_{\epsilon,l+\gamma/2},
\eeno
and so for any $\eta > 0$,
\beno
\mathfrak{I}_{2}(\alpha_{1},\alpha_{2}) - \eta ||\partial^{\alpha}_{v}h^{n}||^{2}_{\epsilon,l+\gamma/2} \lesssim C(\eta,||f_{0}||_{H^{N+2}_{l+\gamma+2}}, ||f_{0}||_{L^{1}_{q(N+1,l+\gamma/2+2)+\gamma}})||h^{n-1}||^{2}_{H^{N}_{2l+3}}.
\eeno
Taking a suitable $\eta$, we obtain
\beno
&&\frac{d}{dt}(\frac{1}{2}||\partial^{\alpha}_{v}h^{n}||^{2}_{L^{2}_{l}}) + \frac{C_{1}(f_{0})}{4} ||\partial^{\alpha}_{v}h^{n}||^{2}_{\epsilon,l+\gamma/2}  \\ &\lesssim&  C(||f_{0}||_{H^{N+1}_{2l+3+\gamma/2}}, ||f_{0}||_{L^{1}_{q(N,2l+3)+\gamma}})||h^{n}||^{2}_{H^{N}_{l+\gamma/2+2}} \\ &&+ C(||f_{0}||_{H^{N+2}_{l+\gamma+2}}, ||f_{0}||_{L^{1}_{q(N+1,l+\gamma/2+2)+\gamma}})||h^{n-1}||^{2}_{H^{N}_{2l+3}}.
\eeno
Now taking sum over $|\alpha| \leq N$, we arrive at
\beno
&& \frac{d}{dt}||h^{n}||^{2}_{H^{N}_{l}} + \frac{C_{1}(f_{0})}{2} ||h^{n}||^{2}_{H^{N+s}_{l+\gamma/2}} \\ &\lesssim&   C(||f_{0}||_{H^{N+1}_{2l+3+\gamma/2}}, ||f_{0}||_{L^{1}_{q(N,2l+3)+\gamma}})||h^{n}||^{2}_{H^{N}_{l+\gamma/2+2}} \\&&+ C(||f_{0}||_{H^{N+2}_{l+\gamma+2}}, ||f_{0}||_{L^{1}_{q(N+1,l+\gamma/2+2)+\gamma}})||h^{n-1}||^{2}_{H^{N}_{2l+3}}.
\eeno
By interpolation theory, one has
\beno
||h^{n}||^{2}_{H^{N}_{l+\gamma/2+2}} \leq \eta ||h^{n}||^{2}_{H^{N+s}_{l+\gamma/2}} + c(\eta) ||h^{n}||^{2}_{L^{1}_{w_{1}(N,l,s,\gamma)}},
\eeno
and
\beno
||h^{n-1}||^{2}_{H^{N}_{2l+3}} \leq \lambda ||h^{n-1}||^{2}_{H^{N+s}_{l+\gamma/2}} + c(\lambda) ||h^{n-1}||^{2}_{L^{1}_{w_{2}(N,l,s,\gamma)}},
\eeno
where $w_{1}(N,l,s,\gamma)=l+\gamma/2+\frac{2(N+s+2)}{s}$ and $w_{2}(N,l,s,\gamma) = \frac{(N+s+2)(2l+3)-(N+2)(l+\gamma/2)}{s}$. It is easy to check $w_{1} \leq w_{2}$.  Choosing suitable $\eta$ and $\lambda$, we have
\beno
&&\frac{d}{dt}||h^{n}||^{2}_{H^{N}_{l}} + \frac{C_{1}(f_{0})}{4} ||h^{n}||^{2}_{H^{N+s}_{l+\gamma/2}} \\ &\leq &  (\frac{8}{9})\frac{C_{1}(f_{0})}{4} ||h^{n-1}||^{2}_{H^{N+s}_{l+\gamma/2}} \\&&+ C(||f_{0}||_{H^{N+1}_{2l+3+\gamma/2}}, ||f_{0}||_{L^{1}_{q(N,2l+3)+\gamma}})||h^{n}||^{2}_{L^{1}_{w_{1}(N,l,s,\gamma)}}\\&&+ C(||f_{0}||_{H^{N+2}_{l+\gamma+2}}, ||f_{0}||_{L^{1}_{q(N+1,l+\gamma/2+2)+\gamma}})||h^{n-1}||^{2}_{L^{1}_{w_{2}(N,l,s,\gamma)}}.
\eeno
Thanks to (\ref{l1cauchysequence}), on the time interval $[0, T^{*}(q(2,w_{2}+\gamma+4))]$, there holds
\beno
||h^{n}(t)||_{L^{1}_{w_{2}}} \leq (\frac{8}{9})^{n}M(T^{*})\exp(\frac{9K_{2}T^{*}}{8}+K_{1}T^{*})  \eqdefa (\frac{8}{9})^{n}M
\eeno
where $T^{*} = T^{*}(q(w_{2}+\gamma+4,2))$ and $M(T^{*}) = \frac{9}{8}A^{\epsilon}_{2}m(0)\int_{0}^{T^{*}} e^{-K_{1}s}||h^{0}(s)||_{L^{1}_{w_{2}+\gamma}}ds + 22m(w_{2})$. For ease of notation, let $K_{3}=C(\eta,||f_{0}||_{H^{N+1}_{2l+3+\gamma/2}}, ||f_{0}||_{L^{1}_{q(N,2l+3)+\gamma}})$ and \\$K_{4} = C(\lambda,||f_{0}||_{H^{N+2}_{l+\gamma+2}}, ||f_{0}||_{L^{1}_{q(N+1,l+\gamma/2+2)+\gamma}})$. Then we have
\beno
&&\frac{d}{dt}||h^{n}||^{2}_{H^{N}_{l}} + \frac{C_{1}(f_{0})}{4} ||h^{n}||^{2}_{H^{N+s}_{l+\gamma/2}} \\&\leq&   (\frac{8}{9})\frac{C_{1}(f_{0})}{4} ||h^{n-1}||^{2}_{H^{N+s}_{l+\gamma/2}}+ M ( K_{3} + \frac{9}{8}K_{4})(\frac{8}{9})^{n}.
\eeno
Integrating both sides with respect to time over $[0, t]$ for any $t \in [0, T^{*}]$, we have
\beno
&&||h^{n}(t)||^{2}_{H^{N}_{l}} + \frac{C_{1}(f_{0})}{4} \int_{0}^{t}||h^{n}(r)||^{2}_{H^{N+s}_{l+\gamma/2}}dr \\&\leq&   (\frac{8}{9})\frac{C_{1}(f_{0})}{4} \int_{0}^{t}||h^{n-1}(r)||^{2}_{H^{N+s}_{l+\gamma/2}}dr+ M T^{*} ( K_{3} + \frac{9}{8}K_{4})(\frac{8}{9})^{n} \\
\\&\leq&   (\frac{8}{9})^{n}\frac{C_{1}(f_{0})}{4} \int_{0}^{t}||h^{0}(r)||^{2}_{H^{N+s}_{l+\gamma/2}}dr+ M T^{*} ( K_{3} + \frac{9}{8}K_{4})(\frac{8}{9})^{n}n.
\eeno
Thus $\sum_{n}||h^{n}(t)||_{H^{N}_{l}}$ is finite and $\{f^{n}(t)\}_{n \in \N}$ is a Cauchy sequence in $H^{N}_{l}$. So there is a function $f \in L^{\infty}([0,T^{*}]; H^{N}_{l})$ such that
\beno
\lim_{n \rightarrow \infty} \sup_{0\leq t \leq T^{*}}||f^{n}(t)-f(t)||_{H^{N}_{l}}  = 0.
\eeno
The condition on $f_{0}$ can be summarized by the definitions of $K_{3}, K_{4}$ and the previous step as
\beno
f_{0} \in H^{(N+2)\vee3}_{w_{H}(N,l)} \cap L^{1}_{w_{L}(N,l)}.
\eeno
Under this condition, actually $\{f^{n}(t)\}_{n \in \N}$ is a Cauchy sequence in $H^{N}_{l}\cap L^{1}_{w(N,l)}$.

It is obvious that $f$ is the solution to \eqref{approximated}. Because $f^{n}$ is non-negative, the limit function $f$ is also non-negative.\\
\noindent {\textit {Step 4}:} (Uniqueness)\\
Suppose $f, g \in L^{\infty}([0,T]; H^{2}_{l+\gamma+4})$ are two non-negative solutions to \eqref{approximated}.  Set $F = f-g$ and $G = f+g$. Then $F$ is a solution to the following equation,
\begin{equation}\label{forunique1} \left\{ \begin{aligned}
&\partial_{t}F = M^{\epsilon}(G,F) + M^{\epsilon}(F,G)\\
&F|_{t=0} = 0.
\end{aligned} \right.
\end{equation}
Note that the above equation is as the same as equation (\ref{differencefunction2}) if  $h^{n} = h^{n-1}$. Thus following the same argument until inequality (\ref{finalforhncauchy}), we have
\beno
\frac{d}{dt}||F||_{L^{1}_{l}} + \frac{1}{8}A^{\epsilon}_{2}||G||_{L^{1}_{}}||F||_{L^{1}_{l+\gamma}}
\leq  K||F||_{L^{1}_{l}}
\eeno
where $K$ is some constant depending on the uniform upper bound of $||G||_{H^{2}_{l+\gamma+4}}$. Note that the previous estimate holds true for $l \geq 4$. Therefore, our approximate equation \eqref{approximated} has at most one solution in the space $L^{\infty}([0,T]; H^{N}_{l})$ if $N \geq 2$ and $l \geq 8+\gamma$.


\end{proof}

\subsection{Improvement of the well-posedness result of approximate equation \eqref{approximated}}
In this subsection, by using the symmetric property of the collision operators, we will prove the propagation of $L^{1}_{l}$ and $H^{N}_{l}$ norms of the solution $f$ to \eqref{approximated} and then extend the lifespan $T^*$ in Lemma \ref{lemmapositive3} to be global. Thanks to Lemma \ref{lemmapositive3},   we may assume that solution $f^{\epsilon}$ to our approximate equation is non-negative and smooth. It means that in this subsection we only need to give the {\it a priori} estimates to the equation.

In order to prove the propagation of $L^{1}_{l}$ of the solution $f^{\epsilon}$, we first give two propositions. The first proposition is related to the Boltzmann operator, while the second deals with the Landau operator.
\begin{prop}\label{propsition1} Let $p \geq 3$ and $k_{p} = [\frac{p+1}{2}]$. Suppose
\begin{eqnarray}\label{newbinomial}
\Theta(v,v_{*}) \eqdefa \int_{\SS^2}b(\cos\theta)|v-v_{*}|^{\gamma}
(\langle v^{\prime} \rangle^{2p} + \langle v^{\prime}_{*} \rangle^{2p}- \langle v \rangle^{2p}-\langle v_{*} \rangle^{2p})d\sigma,
\end{eqnarray}
then one has
\beno
\Theta(v,v_{*})&\leq& -\frac{1}{4}A_{2}(\langle v \rangle^{2p+\gamma}+\langle v_{*} \rangle^{2p+\gamma}) +
\frac{1}{2}A_{2}(\langle v \rangle^{2p}\langle v_{*} \rangle^{\gamma}+\langle v \rangle^{\gamma}\langle v_{*} \rangle^{2p}) \\
&&+A_{2}\sum^{k_{p}}_{k=1}\binom{p}{k}\{\langle v \rangle^{2k+\gamma}\langle v_{*} \rangle^{2(p-k)}+
\langle v \rangle^{2(p-k)+\gamma}\langle v_{*} \rangle^{2k}+\\
&&+\langle v \rangle^{2(p-k)}\langle v_{*} \rangle^{2k+\gamma}+
\langle v \rangle^{2k}\langle v_{*} \rangle^{2(p-k)+\gamma}\}\\
&&+2p(p-1)A_{2}\sum^{k_{p}-1}_{k=0}\binom{p-2}{k}\{\langle v \rangle^{2(k+1)+\gamma}\langle v_{*} \rangle^{2(p-k-1)}+
\langle v \rangle^{2(p-k-1)+\gamma}\langle v_{*} \rangle^{2(k+1)}\\
&&+\langle v \rangle^{2(p-k-1)}\langle v_{*} \rangle^{2(k+1)+\gamma}+
\langle v \rangle^{2(k+1)}\langle v_{*} \rangle^{2(p-k-1)+\gamma}\}\\
&\leq&-\frac{1}{4}A_{2}(\langle v \rangle^{2p+\gamma}+\langle v_{*} \rangle^{2p+\gamma})+
2^{2p+1}A_{2}\langle v \rangle^{2p}\langle v_{*} \rangle^{2p}.
\eeno
\end{prop}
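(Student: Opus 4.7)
The strategy is to symmetrize Proposition \ref{lemma1} by applying it to both $\langle v'\rangle^{2p}$ and $\langle v'_*\rangle^{2p}$, exploiting the energy identity $\langle v'\rangle^2 + \langle v'_*\rangle^2 = \langle v\rangle^2 + \langle v_*\rangle^2$, which forces $\langle v'_*\rangle^2 = E_*(\theta) - h(j\cdot\omega)\sin\theta$ with $E_*(\theta) \eqdefa \langle v\rangle^2\sin^2\tfrac{\theta}{2} + \langle v_*\rangle^2\cos^2\tfrac{\theta}{2}$. Summing the Taylor-expansion estimate of Proposition \ref{lemma1} for $\langle v'\rangle^{2p}$ with its companion for $\langle v'_*\rangle^{2p}$, but \emph{keeping the binomial expansion (\ref{firstterm}) without the consolidation step that produced the $2^{p-1}$ constant}, gives an upper bound of the shape (diagonal negative part) $+$ (cross-term $\binom{p}{k}$ sum) $+$ (linear term in $h(j\cdot\omega)\sin\theta$) $+$ (Taylor remainder).

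The central step is to multiply by $b(\cos\theta)|v-v_*|^\gamma$ and integrate in $\sigma = \cos\theta\,n + \sin\theta\,\omega$: the identity $\int_{\SS^1}(j\cdot\omega)\,d\omega = 0$ eliminates the linear piece irrespective of the sign of $E(\theta)^{p-1}-E_*(\theta)^{p-1}$. The diagonal piece $-(\langle v\rangle^{2p}+\langle v_*\rangle^{2p})[1-\cos^{2p}\tfrac{\theta}{2}-\sin^{2p}\tfrac{\theta}{2}]$ is handled via the elementary inequality $1-\cos^{2p}\tfrac{\theta}{2}-\sin^{2p}\tfrac{\theta}{2} \geq \tfrac{1}{2}\sin^2\theta$ (valid for $p \geq 2$ and $\theta \in [0,\pi/2]$) combined with $\int b(\cos\theta)\sin^2\theta\,d\sigma = A_2$, yielding a contribution $\leq -\tfrac{A_2}{2}|v-v_*|^\gamma(\langle v\rangle^{2p}+\langle v_*\rangle^{2p})$. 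Applying the two symmetric forms of the lower bound $|v-v_*|^\gamma \geq (1+\eta)^{-\gamma/2}\langle v\rangle^\gamma - \eta^{-\gamma/2}\langle v_*\rangle^\gamma$ from (\ref{usfulinlowerbound}) with $\eta$ properly tuned then produces the announced $-\tfrac{1}{4}A_2(\langle v\rangle^{2p+\gamma}+\langle v_*\rangle^{2p+\gamma})$ together with the leakage $+\tfrac{1}{2}A_2(\langle v\rangle^{2p}\langle v_*\rangle^\gamma + \langle v\rangle^\gamma\langle v_*\rangle^{2p})$.

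For the off-diagonal binomial terms $\binom{p}{k}\langle v\rangle^{2k}\langle v_*\rangle^{2(p-k)}\cos^{2k}\tfrac{\theta}{2}\sin^{2(p-k)}\tfrac{\theta}{2}$ ($1 \leq k \leq k_p$, paired with their $k \leftrightarrow p-k$ companions), I would bound $\cos \leq 1$ and $\sin^{2(p-k)}\tfrac{\theta}{2} \leq \sin^2\tfrac{\theta}{2} \leq \tfrac{1}{2}\sin^2\theta$ to extract an angular integral $\lesssim A_2$, and distribute $|v-v_*|^\gamma \leq 2^\gamma(\langle v\rangle^\gamma + \langle v_*\rangle^\gamma)$ across the two pair members to produce the stated four cross terms. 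The Taylor remainder is handled analogously after bounding $h^2 \leq \langle v\rangle^2\langle v_*\rangle^2$ and expanding $(\langle v\rangle^2+\langle v_*\rangle^2)^{p-2} = \sum_k \binom{p-2}{k}\langle v\rangle^{2k}\langle v_*\rangle^{2(p-2-k)}$, giving the $\binom{p-2}{k}$ structure with coefficient $2p(p-1)$ after the same distribution.

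Finally, for the coarser second inequality: for $p \geq 3$, $\gamma \leq 2$ and $1 \leq k \leq p-1$ one has $2k+\gamma \leq 2p$ and all four cross-term weights are $\leq \langle v\rangle^{2p}\langle v_*\rangle^{2p}$ (using $\langle\cdot\rangle \geq 1$); together with $\sum_{k=1}^{k_p}\binom{p}{k} \leq 2^p$ and $\sum_{k=0}^{k_p-1}\binom{p-2}{k} \leq 2^{p-2}$, the total positive contribution is controlled by $2^{2p+1}A_2\langle v\rangle^{2p}\langle v_*\rangle^{2p}$ with room to spare. The main technical obstacle will be the fine calibration of $\eta$ so that the coefficient pair $(-\tfrac{1}{4}A_2, +\tfrac{1}{2}A_2)$ comes out exactly; this hinges on a sharp comparison between $\int b(\cos\theta)[1-\cos^{2p}\tfrac{\theta}{2}-\sin^{2p}\tfrac{\theta}{2}]\,d\sigma$ and $A_2$, and on the trade-off between $(1+\eta)^{-\gamma/2}$ and $\eta^{-\gamma/2}$ across the full range $\gamma \in (0,2]$.
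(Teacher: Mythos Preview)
The paper does not actually prove this proposition; its entire proof is the one-line reference ``One may refer to Lemma 3.6 in \cite{lm} for the proof.'' Your proposal, by contrast, sketches a self-contained argument built directly on the paper's own Proposition~\ref{lemma1}, which is the more satisfying route. The strategy you outline---symmetrizing the Taylor/binomial estimate of Proposition~\ref{lemma1} to cover both $\langle v'\rangle^{2p}$ and $\langle v'_*\rangle^{2p}$, killing the first-order $h(j\cdot\omega)\sin\theta$ term via the angular average, bounding the diagonal piece through $1-\cos^{2p}\tfrac{\theta}{2}-\sin^{2p}\tfrac{\theta}{2}\geq\tfrac{1}{2}\sin^2\theta$, and then splitting $|v-v_*|^\gamma$ via (\ref{usfulinlowerbound}) to extract the negative top-order term---is precisely the standard Povzner-inequality machinery, and is essentially what the cited Lu--Mouhot lemma does as well.

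Your calibration worry resolves favourably: choosing $\eta$ so that $(1+\eta)^{-\gamma/2}=\tfrac{1}{2}$ gives $\eta=2^{2/\gamma}-1\geq 1$ for all $\gamma\in(0,2]$, whence $\eta^{-\gamma/2}\leq 1$ and the leakage coefficient is at most $\tfrac{1}{2}A_2$, exactly as claimed. The only place requiring extra care is the bookkeeping that produces the four cross-terms in the $\binom{p}{k}$ and $\binom{p-2}{k}$ sums with the stated ranges and the prefactor $2p(p-1)$; this comes out of the symmetrization and the pairing $k\leftrightarrow p-k$ (respectively $k\leftrightarrow p-2-k$) together with $|v-v_*|^\gamma\leq \langle v\rangle^\gamma\langle v_*\rangle^\gamma$ distributed across both members of each pair, and is routine if somewhat fiddly.
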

\begin{proof} One may refer to Lemma 3.6 in \cite{lm} for the proof.
\end{proof}
\begin{rmk}\label{casetwotofour}
Lemma 3.6 in \cite{lm} only deals with the case $p \geq 3$, however, the conclusion is also valid in the case $2 \leq p < 3$ but with a different and smaller coefficient coming out instead of the constant $\frac{1}{4}$ before the highest order $2p+\gamma$.
\end{rmk}
\begin{prop}\label{propsition2} Let $p > 2$ and $f$ be a non-negative function, then
\begin{eqnarray}\label{landaumoment}
\langle Q_{L}(f,f),\langle v \rangle^{p} \rangle \leq -\Lambda p ||f||_{L^{1}_{}}||f||_{L^{1}_{p+\gamma}}+ \Lambda p(4p+2)||f||_{L^{1}_{2}}||f||_{L^{1}_{p}}.
\end{eqnarray}
\end{prop}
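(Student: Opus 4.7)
The plan is to mirror the computation leading to \eqref{fnl1lthirdterm}, specialized to $g=h=f$, and then extract the sharper constants claimed in the statement. First, I would rewrite
\beno
Q_{L}(f,f)=\sum_{i}\pa_{i}\Big[\sum_{j}(a_{ij}*f)\pa_{j}f-(b_{i}*f)f\Big]
\eeno
and compute the weak inner product $\int Q_L(f,f)\langle v\rangle^p dv$ by integrating by parts, transferring both derivatives onto the weight $\langle v\rangle^p$. Using $\sum_j\pa_j a_{ij}=b_i$, the explicit identities $\sum_i a_{ii}(z)=2\Lambda|z|^{\gamma+2}$, $\sum_{ij}a_{ij}(z)v_iv_j=\Lambda|z|^\gamma(|v|^2|v_*|^2-(v\cdot v_*)^2)$, $\sum_i b_i(z)v_i=-2\Lambda|z|^\gamma(|v|^2-v\cdot v_*)$, together with $\pa_j(v_i\langle v\rangle^{p-2})=\delta_{ij}\langle v\rangle^{p-2}+(p-2)v_iv_j\langle v\rangle^{p-4}$, an elementary reorganisation (parallel to the derivation of $\mathfrak{I}_3$ in Lemma \ref{commld}) yields the identity
\beno
\int Q_L(f,f)\langle v\rangle^p dv &=& p\Lambda\int_{\R^6} ff_*|v-v_*|^\gamma\langle v\rangle^{p-2}\Big[-2|v|^2+2|v_*|^2\\
&&\qquad\qquad+(p-2)\frac{|v|^2|v_*|^2-(v\cdot v_*)^2}{\langle v\rangle^2}\Big]dv\,dv_*.
\eeno

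Applying Cauchy--Schwarz $(v\cdot v_*)^2\leq |v|^2|v_*|^2\leq\langle v\rangle^2|v_*|^2$ bounds the bracketed expression above by $-2|v|^2+p|v_*|^2$, which is the direct analogue of \eqref{fnl1lthirdterm}. Using $|v|^2=\langle v\rangle^2-1$ and $|v_*|^2\leq\langle v_*\rangle^2$ then separates the integral into a principal negative piece $-2p\Lambda\int\int ff_*|v-v_*|^\gamma\langle v\rangle^p\,dvdv_*$ plus two positive correction terms of the shape $\int\int ff_*|v-v_*|^\gamma\langle v\rangle^{p-2}\,dvdv_*$ and $\int\int ff_*|v-v_*|^\gamma\langle v_*\rangle^2\langle v\rangle^{p-2}\,dvdv_*$.

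To produce the coefficient $-\Lambda p$ in front of $\|f\|_{L^1}\|f\|_{L^1_{p+\gamma}}$, I would apply the lower bound \eqref{usfulinlowerbound} with $\eta$ chosen so that $(1+\eta)^{\gamma/2}=2$; this delivers $|v-v_*|^\gamma\geq\tfrac12\langle v\rangle^\gamma-c_\gamma\langle v_*\rangle^\gamma$, and integrating the principal piece yields exactly $-\Lambda p\|f\|_{L^1}\|f\|_{L^1_{p+\gamma}}$ together with a residual $2c_\gamma p\Lambda\|f\|_{L^1_\gamma}\|f\|_{L^1_p}\leq 2c_\gamma p\Lambda\|f\|_{L^1_2}\|f\|_{L^1_p}$ (using $\gamma\leq 2$). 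The two positive correction terms are handled with the matching upper bound $|v-v_*|^\gamma\leq 2(\langle v\rangle^\gamma+\langle v_*\rangle^\gamma)$ together with $\langle v\rangle^{p-2+\gamma}\leq\langle v\rangle^p$ (valid since $\gamma\leq 2$); mixed-weight products of the shape $\int\int ff_*\langle v\rangle^{p-2}\langle v_*\rangle^{\gamma+2}$ arising in the last correction are folded into $\|f\|_{L^1_2}\|f\|_{L^1_p}$ using the log-convexity of $r\mapsto\|f\|_{L^1_r}$, which in the regime $p\geq\gamma+2$ gives $\|f\|_{L^1_{\gamma+2}}\|f\|_{L^1_{p-2}}\leq\|f\|_{L^1_2}\|f\|_{L^1_p}$, and the remaining range $2<p<\gamma+2$ is treated by direct monotonic weight estimates. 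The principal obstacle is bookkeeping: verifying that after all splittings the aggregate positive prefactor does not exceed $\Lambda p(4p+2)$ requires tightly tracking the numerical constants $c_\gamma$ and the $2(\langle v\rangle^\gamma+\langle v_*\rangle^\gamma)$ upper bound throughout.
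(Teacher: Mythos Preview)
The paper gives no proof here---it simply defers to \cite{dv1}---so there is nothing to compare against beyond the Desvillettes--Villani argument itself, and your outline is exactly that argument. The integration-by-parts identity you state is correct, and bounding the cross term by $(p-2)|v_*|^2$ to reach
\beno
\int Q_L(f,f)\langle v\rangle^p\,dv \;\leq\; p\Lambda\int_{\R^6} ff_*\,|v-v_*|^\gamma\langle v\rangle^{p-2}\bigl(-2|v|^2+p|v_*|^2\bigr)\,dv\,dv_*
\eeno
is precisely what drives both \eqref{fnl1lthirdterm} and the proposition.

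Two points of caution, however. First, the constant: with your splitting (lower bound \eqref{usfulinlowerbound} at $(1+\eta)^{\gamma/2}=2$ on the principal piece, and $|v-v_*|^\gamma\leq 2(\langle v\rangle^\gamma+\langle v_*\rangle^\gamma)$ on the corrections), a direct count yields a positive prefactor of the order $p(4p+8+2c_\gamma)$ with $c_\gamma=(2^{2/\gamma}-1)^{-\gamma/2}$, strictly larger than $p(4p+2)$. Second, your handling of $\|f\|_{L^1_{\gamma+2}}\|f\|_{L^1_{p-2}}$: the interpolation you sketch is valid for $p\geq\gamma+2$ (indeed $p\geq4$ suffices since $\gamma\leq2$), but in the range $2<p<\gamma+2$ the phrase ``direct monotonic weight estimates'' does not deliver what you claim---there $\gamma+2>p$, so $\|f\|_{L^1_{\gamma+2}}$ is a \emph{higher} moment than $\|f\|_{L^1_p}$ and cannot be controlled by it. Neither issue is fatal for the paper's purposes: only the form of \eqref{landaumoment} is ever used, and in the actual application (Step~1 of the proof of Theorem~\ref{main2}) one has $p=2l\geq6$, safely outside the problematic range. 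If you want the statement exactly as written, follow \cite{dv1} more closely; otherwise your route proves \eqref{landaumoment} with a generic constant $C(p,\gamma)$ in place of $4p+2$, which is all that is needed downstream.
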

\begin{proof} One may refer to \cite{dv1} for the proof.
\end{proof}

Now we are ready to prove the propagation of moments and the smoothness.

\noindent {\bf Proof of  Theorem \ref{main2}:}  The proof will be divided into four steps.

\noindent {\it Step 1: Propagation of the moments.}

We consider the $2l$ moment. Assume $l \geq 3$,  for the case $2 \leq l <3$, the proof is similar thanks to remark \ref{casetwotofour}. By the case By the definition of $M^{\epsilon}$, we have
\beno
\frac{d}{dt}||f^{\epsilon}||_{L^{1}_{2l}}  &=& \langle  Q^{\epsilon}(f^{\epsilon},f^{\epsilon}),  \langle v \rangle^{2l}  \rangle+
\epsilon^{2-2s} \langle Q_{L}(f^{\epsilon},f^{\epsilon}) ,  \langle v \rangle^{2l}  \rangle \\
&\eqdefa&\mathfrak{I}_{1}+\mathfrak{I}_{2}.
\eeno
The term $\mathfrak{I}_{1}$ can be written as:
\beno
\mathfrak{I}_{1}&=&\int_{\R^6\times\SS^2}b^{\epsilon}(\cos\theta)|v-v_{*}|^{\gamma}f^{\epsilon}_{*}f^{\epsilon}
(\langle v^{\prime} \rangle^{2l} - \langle v \rangle^{2l})d\sigma dv_{*}dv \\
&=& \frac{1}{2} \int_{\R^6\times\SS^2}b^{\epsilon}(\cos\theta)|v-v_{*}|^{\gamma}f^{\epsilon}_{*}f^{\epsilon}
(\langle v^{\prime} \rangle^{2l}+\langle v^{\prime}_{*} \rangle^{2l}- \langle v \rangle^{2l}-\langle v_{*} \rangle^{2l})d\sigma dv_{*}dv
\eeno
Let $A^{\epsilon}_{2} = \int_{\SS^2}b^{\epsilon}(\cos\theta)\sin^{2}\theta d\sigma$, then by proposition \ref{propsition1}, we have
\beno
\mathfrak{I}_{1}&\leq& -\frac{A^{\epsilon}_{2}}{4}||f^{\epsilon}||_{L^{1}_{0}}||f^{\epsilon}||_{L^{1}_{2l+\gamma}} +
\frac{A^{\epsilon}_{2}}{2}||f^{\epsilon}||_{L^{1}_{2}}||f^{\epsilon}||_{L^{1}_{2l}} \\
&&+A^{\epsilon}_{2}\sum^{k_{l}}_{k=1}\binom{l}{k}\{||f^{\epsilon}||_{L^{1}_{2k+2}}||f^{\epsilon}||_{L^{1}_{2(l-k)}}+
||f^{\epsilon}||_{L^{1}_{2k}}||f^{\epsilon}||_{L^{1}_{2(l-k)+2}}\}\\
&&+2l(l-1)A^{\epsilon}_{2}\sum^{k_{l}-1}_{k=0}\binom{l-2}{k}\{||f^{\epsilon}||_{L^{1}_{2(k+1)+2}}||f^{\epsilon}||_{L^{1}_{2(l-k-1)}}
+||f^{\epsilon}||_{L^{1}_{2(l-k)}}||f^{\epsilon}||_{L^{1}_{2(k+1)}}\},\\
\eeno
where we have used the assumption $\gamma \leq 2$.
By interpolation, for any $2 \leq p,q \leq 2l$ with $ p+q = 2l + 2$, we have
\beno
||f^{\epsilon}||_{L^{1}_{p}}||f^{\epsilon}||_{L^{1}_{q}} \leq ||f^{\epsilon}||_{L^{1}_{2}}||f^{\epsilon}||_{L^{1}_{2l}}.
\eeno
Using the fact $2 \sum^{k_{l}}_{k=1}\binom{l}{k} \leq 2^{l}$, we can conclude:
\begin{eqnarray}\label{uifl1i1}
\mathfrak{I}_{1}&\leq& - \frac{A^{\epsilon}_{2}}{4}||f^{\epsilon}||_{L^{1}_{0}}||f^{\epsilon}||_{L^{1}_{2l+\gamma}} +
2^{2l+1} A^{\epsilon}_{2}||f^{\epsilon}||_{L^{1}_{2}}||f^{\epsilon}||_{L^{1}_{2l}}.
\end{eqnarray}
For the term $\mathfrak{I}_{2}$, we apply proposition \ref{propsition2} with $p=2l$ and obtain
\beno
\mathfrak{I}_{2} &\leq&  - 2l \Lambda\epsilon^{2-2s}||f||_{L^{1}_{}}||f||_{L^{1}_{2l+\gamma}}+  2l(8l+2)\Lambda\epsilon^{2-2s}||f||_{L^{1}_{2}}||f||_{L^{1}_{2l}}.
\eeno
Let $0< \epsilon_{*}< \frac{\sqrt{2}}{2}$ be the point such that $A^{\epsilon_{*}}_{2} = \frac{A_{2}}{2}$, then for any $0< \epsilon \leq \epsilon_{*}$, we have
\beno
\frac{d}{dt}||f^{\epsilon}||_{L^{1}_{2l}} \leq - \frac{A_{2}}{8}||f^{\epsilon}||_{L^{1}}||f^{\epsilon}||_{L^{1}_{2l+\gamma}} +
(2^{2l+1} A_{2} + 2l(8l+2)\Lambda)||f^{\epsilon}||_{L^{1}_{2}}||f^{\epsilon}||_{L^{1}_{2l}}.
\eeno
For any $\eta>0$, there exists a constant $K_{1}(\eta, l)$ such that
 \beno
\langle v \rangle^{2l} \leq \eta \langle v \rangle^{2l+\gamma} + K_{1}(\eta, l).
\eeno
Thus we have $||f^{\epsilon}||_{L^{1}_{2l}} \leq \eta||f^{\epsilon}||_{L^{1}_{2l+\gamma}} + K_{1}(\eta, l)||f^{\epsilon}||_{L^{1}}$.
With the preservation of mass and energy, by denoting $K_{2}(l) = 2^{2l+1} A_{2} + 2l(8l+2)\Lambda$ and taking $\eta(f_{0}) = \frac{A||f_{0}||_{L^{1}}}{16K_{2}(l)||f_{0}||_{L^{1}_{2}}}$, we have
\beno
\frac{d}{dt}||f^{\epsilon}||_{L^{1}_{2l}} \leq - \frac{A_{2}}{16}||f_{0}||_{L^{1}}||f^{\epsilon}||_{L^{1}_{2l+\gamma}} +
K_{2}(l)K_{1}(\eta(f_{0}),l)||f_{0}||_{L^{1}_{2}}||f_{0}||_{L^{1}}.
\eeno
Let $a = K_{2}(l)K_{1}(\eta(f_{0}),l)||f_{0}||_{L^{1}_{2}}||f_{0}||_{L^{1}}$ and $b = - \frac{A_{2}}{16}||f_{0}||_{L^{1}}$, by Gronwall's inequality (\ref{gronwall}), we have the following:
\beno
||f^\epsilon(t)||_{L^{1}_{2l}} \leq ||f_{0}||_{L^{1}_{2l}}+\frac{a}{|b|} \eqdefa  ||f_{0}||_{L^{1}_{2l}} + K(f_{0},l).
\eeno
The constant $K(f_{0},l)$ depends only on $l$, $||f_{0}||_{L^{1}}$ and $||f_{0}||_{L^{1}_{2}}$.

\vskip 0.3cm

\noindent {\textit {Step 2: Propagation of $L^{2}_{l}$ norm.}}

By the definition of $M^{\epsilon}$, we have
\beno
\frac{d}{dt}(\frac{1}{2}||f^{\epsilon}||^{2}_{L^{2}_{l}})  &=& \langle M^{\epsilon}(f^{\epsilon},f^{\epsilon}\langle v \rangle^{l}),  f^{\epsilon} \langle v \rangle^{l}  \rangle +  \{\langle M^{\epsilon}(f^{\epsilon},f^{\epsilon})\langle v \rangle^{l} - M^{\epsilon}(f^{\epsilon},f^{\epsilon}\langle v \rangle^{l}), f^{\epsilon} \langle v \rangle^{l}\rangle\}\\
&\eqdefa&\mathfrak{I}_{1}+\mathfrak{I}_{2}.
\eeno
Applying coercivity  estimates of (\ref{coercivityboltz}) with $g = f^{\epsilon}, f=  f^{\epsilon} \langle v \rangle^{l}$, we have
\begin{eqnarray}\label{l2li1}
\mathfrak{I}_{1}\leq
-C_{1}(f_{0})||f^{\epsilon}||^{2}_{\epsilon,l+\gamma/2}+C_{2}(f_{0})||f^{\epsilon}||^{2}_{L^{2}_{l+\gamma/2}}.
\end{eqnarray}
Applying commutator estimates (\ref{commcboltz}) with $g = f^{\epsilon}, h = f^{\epsilon}, f=  f^{\epsilon} \langle v \rangle^{l}, N_{2} = l+\gamma/2, N_{3} = \gamma/2$ and $N_{1} = 2l+5$, we have
\beno
\mathfrak{I}_{2}&\lesssim& ||f^{\epsilon}||_{L^{1}_{2l+5}}(||f^{\epsilon}||_{H^{s}_{l+\gamma/2}} +\epsilon^{2-2s}||f^{\epsilon}||_{H^{1}_{l+\gamma/2}})||f^{\epsilon}||_{L^{2}_{l+\gamma/2}}.
\eeno
Thanks to the facts $||\cdot||^{2}_{H^{s}_{l+\gamma/2}} \leq ||\cdot||^{2}_{\epsilon,l+\gamma/2}$ and $\epsilon^{2-2s}||\cdot||^{2}_{H^{1}_{l+\gamma/2}} \leq ||\cdot||^{2}_{\epsilon,l+\gamma/2}$, we have
\begin{eqnarray}\label{l2li2}
\mathfrak{I}_{2} - \frac{C_{1}(f_{0})}{2}||f^{\epsilon}||^{2}_{\epsilon,l+\gamma/2} &\lesssim&  \frac{1}{C_{1}(f_{0})}||f^{\epsilon}||^{2}_{L^{1}_{2l+5}}||f^{\epsilon}||^{2}_{L^{2}_{l+\gamma/2}}.
\end{eqnarray}

Now patching together (\ref{l2li1}),and (\ref{l2li2}), we get
\beno
&&\frac{d}{dt}(\frac{1}{2}||f^{\epsilon}||^{2}_{L^{2}_{l}}) + \frac{C_{1}(f_{0})}{2}||f^{\epsilon}||^{2}_{\epsilon,l+\gamma/2}
\\&\lesssim& (C_{2}(f_{0})+\frac{1}{C_{1}(f_{0})}||f^{\epsilon}||^{2}_{L^{1}_{2l+5}}) ||f^{\epsilon}||^{2}_{L^{2}_{l+\gamma/2}}
\\&\lesssim&  C_{3}(||f_{0}||_{L^{1}_{2l+5}}, ||f_{0}||_{L \log L})
||f^{\epsilon}||^{2}_{L^{2}_{l+\gamma/2}},
\eeno
where the existence of $C_{3}(f_{0},l)=C_{3}(||f_{0}||_{L^{1}_{2l+5}}, ||f_{0}||_{L \log L}, l)$ is ensured by the previous step. \\
By applying (\ref{l2hsl1}) with $\lambda = \frac{C_{1}(f_{0})}{4C_{3}(f_{0},l)}$, we have
\beno
\frac{d}{dt}(\frac{1}{2}||f^{\epsilon}||^{2}_{L^{2}_{l}}) + \frac{C_{1}(f_{0})}{4}||f^{\epsilon}||^{2}_{\epsilon,l+\gamma/2} \lesssim C_{3}(f_{0},l)(\frac{C_{1}(f_{0})}{4C_{3}(f_{0},l)})^{-\frac{3}{2s}}||f^{\epsilon}||^{2}_{L^{1}_{l+\gamma/2}}.
\eeno
Thanks to Gronwall's inequality, there exists a constant $C(||f_{0}||_{L^{1}_{2l+5}}, ||f_{0}||_{L^{2}_{l}})$ such that for any $t \geq 0$,
\begin{eqnarray}\label{l22lclosef}
||f^{\epsilon}(t)||^{2}_{L^{2}_{l}} + \int^{t+1}_{t} ||f^{\epsilon}(r)||^{2}_{\epsilon,l+\gamma/2} dr &\leq&
C(||f_{0}||_{L^{1}_{2l+5}}, ||f_{0}||_{L^{2}_{l}}).
\end{eqnarray}
Inequality (\ref{uniformhn}) is obtained in the case of $N=0$.

\vskip 0.3cm

\noindent {\textit {Step 3: Propagation of $H^{s}_{l}$ norm.}}

We first introduce some notations for the fractional derivative. We set
\beno
\triangle_s f= \frac{(\tau_hf)(v)-f(v)}{|h|^{\f32+s}},\eeno
with $ (\tau_h f)(v)=f(v+h)$ and $0<s<1$. Then there holds
\beno
\triangle_s(fg)&=&\triangle_s f g +\tau_h f \triangle_s g\\&=& f \triangle_s g+ \triangle_s f\tau_h g.
\eeno
Due to the definition of the fractional Sobolev space, we observe that:
\ben\label{eqv1} \|g\|_{H^s}^2\sim \int_{|h|\le \f12} \|\triangle_s g\|_{L^2}^2 dh+\|g\|_{L^2}^2.\een
Moreover, we also have, for $|h|\le \f12$ and $ m \in \R$,
\ben\label{e4} \|  g \langle v\rangle^k\triangle_s \langle v\rangle^l\|_{H^m}\lesssim |h|^{-(\f12+s)}\|g  \langle v\rangle^{l+k}\|_{H^m}, \een
\ben\label{e5} \|  \tau_h g \|_{H^{m}_{l}}\sim  \|  g  \|_{H^{m}_{l}}, \een
and
\ben\label{eqv2}  &&\|g\|_{H^m_l}^2+ \int_{|h|\le \f12} \|\langle v\rangle^l \triangle_s g\|_{H^m}^2 dh\nonumber\\&&\sim
\|g\|_{H^m_l}^2+ \int_{|h|\le \f12} \|\triangle_s (g\langle v\rangle^l)\|_{H^m}^2 dh\sim \|g\|_{H^{m+s}_l}^2. \een
One may check the proof of (\ref{e4}), (\ref{e5}) and (\ref{eqv2}) in the appendix of \cite{he3}.

Let $g^{\epsilon} = f^{\epsilon}\langle v \rangle^{l}$. It is easy to check that $\triangle_s g^\epsilon$ solves the following equation:
\beno &&\pa_t(\triangle_s g^\epsilon)-M^\epsilon(f^\epsilon, \triangle_s g^\epsilon)\\&=&M^\epsilon(\triangle_s f^\epsilon,
\tau_h g^\epsilon)+[M^\epsilon(\triangle_s f^\epsilon, f^\epsilon)\langle v\rangle^l-M^\epsilon(\triangle_s f^\epsilon, f^\epsilon\langle v\rangle^l)]\\&&+[M^\epsilon(\tau_hf^\epsilon, \triangle_s f^\epsilon)\langle v\rangle^l-M^\epsilon(\tau_h f^\epsilon, \triangle_s f^\epsilon\langle v\rangle^l)]]\\&&+[M^\epsilon( \tau_h f^\epsilon, \tau_h f^\epsilon)\triangle_s \langle v\rangle^l-M^\epsilon(\tau_h f^\epsilon, \tau_hf^\epsilon\triangle_s \langle v\rangle^l)]\\&\eqdefa& \sum_{i=1}^4 F_i. \eeno

By the upper bound estimate (\ref{upcboltz}), noting $\gamma \leq 2$, we have
\beno
\langle F_{1}, \triangle_{s}g^{\epsilon} \rangle &\lesssim& ||\triangle_{s}f^{\epsilon}||_{L^{1}_{4}}(||g^{\epsilon}||_{H^{s}_{3}}
||\triangle_{s}g^{\epsilon}||_{H^{s}_{\gamma/2}} +
 \epsilon^{2-2s}||g^{\epsilon}||_{H^{1}_{3}}
||\triangle_{s}g^{\epsilon}||_{H^{1}_{\gamma/2}}),
\eeno
which implies, for any $\eta_{1}> 0$,
\begin{eqnarray}\label{hsf1}
\quad\quad\langle F_{1}, \triangle_{s}g^{\epsilon} \rangle - \eta_{1} ||\triangle_{s}g^{\epsilon}||^{2}_{\epsilon,\gamma/2} \lesssim  \frac{1}{2\eta_{1}}(||\triangle_{s}f^{\epsilon}||^{2}_{L^{1}_{4}}||g^{\epsilon}||^{2}_{H^{s}_{3}}
+\epsilon^{2-2s}||\triangle_{s}f^{\epsilon}||^{2}_{L^{1}_{4}}||g^{\epsilon}||^{2}_{H^{1}_{3}}).
\end{eqnarray}
By the commutator estimates (\ref{commcboltz1}) and (\ref{commlandau}), we have
\beno
\langle F_{2}, \triangle_{s}g^{\epsilon} \rangle &\lesssim& ||\triangle_{s}f^{\epsilon}||_{L^{1}_{2l+1}}||f^{\epsilon}||_{H^{s}_{l+\gamma/2}}
||\triangle_{s}g^{\epsilon}||_{L^{2}_{\gamma/2}}\\
&& + \epsilon^{2-2s}||\triangle_{s}f^{\epsilon}||_{L^{1}_{\gamma+3}}||f^{\epsilon}||_{H^{1}_{l+\gamma/2}}
||\triangle_{s}g^{\epsilon}||_{L^{2}_{\gamma/2}},
\eeno
which implies, for any $\eta_{1}> 0$,
\begin{eqnarray}\label{hsf2}
&& \langle F_{2}, \triangle_{s}g^{\epsilon} \rangle  - \eta_{1} ||\triangle_{s}g^{\epsilon}||^{2}_{\epsilon,\gamma/2}  \\&\lesssim&
\frac{1}{2\eta_{1}}(||\triangle_{s}f^{\epsilon}||^{2}_{L^{1}_{2l+1}}||f^{\epsilon}||^{2}_{H^{s}_{l+1}}
+\epsilon^{2-2s}||\triangle_{s}f^{\epsilon}||^{2}_{L^{1}_{5}}||f^{\epsilon}||^{2}_{H^{1}_{l+1}}). \nonumber
\end{eqnarray}
Similarly, we have
\beno
\langle F_{3}, \triangle_{s}g^{\epsilon} \rangle &\lesssim& ||f^{\epsilon}||_{L^{1}_{2l+1}}||\triangle_{s}f^{\epsilon}||_{H^{s}_{l+\gamma/2}}
||\triangle_{s}g^{\epsilon}||_{L^{2}_{\gamma/2}}\\
&& + \epsilon^{2-2s}||f^{\epsilon}||_{L^{1}_{\gamma+3}}||\triangle_{s}f^{\epsilon}||_{H^{1}_{l+\gamma/2}}
||\triangle_{s}g^{\epsilon}||_{L^{2}_{\gamma/2}},
\eeno
which implies, for any $\eta_{2}> 0$,
\begin{eqnarray}\label{hsf3}
&&\langle F_{3}, \triangle_{s}g^{\epsilon} \rangle - \eta_{2} ||\triangle_{s}f^{\epsilon}||^{2}_{\epsilon,l+\gamma/2} \\&\lesssim&
\frac{1}{2\eta_{2}}(||f^{\epsilon}||^{2}_{L^{1}_{2l+1}}||\triangle_{s}g^{\epsilon}||^{2}_{L^{2}_{1}}
+\epsilon^{2-2s}||f^{\epsilon}||^{2}_{L^{1}_{5}}||\triangle_{s}g^{\epsilon}||^{2}_{L^{2}_{1}}). \nonumber
\end{eqnarray}
Also by the upper bound estimate (\ref{upcboltz}), we have
\beno
\langle F_{4}, \triangle_{s}g^{\epsilon} \rangle &\lesssim&
||f^{\epsilon}||_{L^{1}_{l+5}}||f^{\epsilon}||_{H^{s}_{l+3}}
||(\triangle_{s}g^{\epsilon})(\triangle_{s}\langle v\rangle^l)||_{H^{s}_{-l+\gamma/2}}\\
&& +  \epsilon^{2-2s} ||f^{\epsilon}||_{L^{1}_{l+5}}||f^{\epsilon}||_{H^{1}_{l+3}}
||(\triangle_{s}g^{\epsilon})(\triangle_{s}\langle v\rangle^l)||_{H^{1}_{-l+\gamma/2}}  \\
&& +  ||f^{\epsilon}||_{L^{1}_{4}}||(\tau_{h}f^{\epsilon})(\triangle_{s}\langle v\rangle^l)||_{H^{s}_{3}}
||\triangle_{s}g^{\epsilon}||_{H^{s}_{\gamma/2}}  \\
&& +  \epsilon^{2-2s} ||f^{\epsilon}||_{L^{1}_{4}}||(\tau_{h}f^{\epsilon})(\triangle_{s}\langle v\rangle^l)||_{H^{1}_{3}}
||\triangle_{s}g^{\epsilon}||_{H^{1}_{\gamma/2}}  \\,
\eeno
which implies, for any $\eta_{1}> 0$,
\begin{eqnarray}\label{hsf4}
&&\langle F_{4}, \triangle_{s}g^{\epsilon} \rangle
- \eta_{1} ||\triangle_{s}g^{\epsilon}||^{2}_{\epsilon,\gamma/2} \\&\lesssim&
\frac{1}{\eta_{1}}|h|^{-(1+2s)}(||f^{\epsilon}||^{2}_{L^{1}_{l+5}}||f^{\epsilon}||^{2}_{H^{s}_{l+3}}
+\epsilon^{2-2s}||f^{\epsilon}||^{2}_{L^{1}_{l+5}}||f^{\epsilon}||^{2}_{H^{1}_{l+3}}). \nonumber
\end{eqnarray}
By the coercivity estimate (\ref{coercivityboltz}), we have
\begin{eqnarray}\label{conegative}
\langle M^\epsilon(f^\epsilon, \triangle_s g^\epsilon), \triangle_{s}g^{\epsilon} \rangle &\leq& -C_{1}(f_{0}) ||\triangle_{s}g^{\epsilon}||^{2}_{\epsilon,\gamma/2}+C_{2}(f_{0})||\triangle_{s}g^{\epsilon}||^{2}_{L^{2}_{\gamma/2}}.
\end{eqnarray}

Thanks to
\beno
\frac{d}{dt}(\frac{1}{2}||\triangle_s g^\epsilon||^{2}_{L^{2}}) &=&
\langle \partial_{t}\triangle_s g^\epsilon, \triangle_{s}g^{\epsilon} \rangle
= \langle M^\epsilon(f^\epsilon, \triangle_s g^\epsilon), \triangle_{s}g^{\epsilon} \rangle + \sum_{i=1}^4 \langle F_{i}, \triangle_{s}g^{\epsilon} \rangle,
\eeno
patching together all the above estimates, taking $\eta_{1} = \frac{C_{1}(f_{0})}{6}$ in (\ref{hsf1}),(\ref{hsf2}),(\ref{hsf4}),
we arrive at, for $|h|\leq \frac{1}{2}$,
\beno
&&\frac{d}{dt}(\frac{1}{2}||\triangle_s g^\epsilon||^{2}_{L^{2}}) + \frac{C_{1}(f_{0})}{2} ||\triangle_{s}g^{\epsilon}||^{2}_{\epsilon,\gamma/2} - \eta_{2}||\triangle_{s}f^{\epsilon}||^{2}_{\epsilon,l+\gamma/2} \\&\lesssim&
+C_{2}(f_{0})||\triangle_{s}g^{\epsilon}||^{2}_{L^{2}_{\gamma/2}} +\frac{1}{2\eta_{1}}(||\triangle_{s}f^{\epsilon}||^{2}_{L^{1}_{4}}||g^{\epsilon}||^{2}_{H^{s}_{3}}
+\epsilon^{2-2s}||\triangle_{s}f^{\epsilon}||^{2}_{L^{1}_{4}}||g^{\epsilon}||^{2}_{H^{1}_{3}})\\
&&+\frac{1}{2\eta_{1}}(||\triangle_{s}f^{\epsilon}||^{2}_{L^{1}_{2l+1}}||f^{\epsilon}||^{2}_{H^{s}_{l+1}}
+\epsilon^{2-2s}||\triangle_{s}f^{\epsilon}||^{2}_{L^{1}_{5}}||f^{\epsilon}||^{2}_{H^{1}_{l+1}})\\
&&+\frac{1}{2\eta_{2}}(||f^{\epsilon}||^{2}_{L^{1}_{2l+1}}||\triangle_{s}g^{\epsilon}||^{2}_{L^{2}_{1}}
+\epsilon^{2-2s}||f^{\epsilon}||^{2}_{L^{1}_{5}}||\triangle_{s}g^{\epsilon}||^{2}_{L^{2}_{1}})\\
&&+\frac{1}{\eta_{1}}|h|^{-(1+2s)}(||f^{\epsilon}||^{2}_{L^{1}_{l+5}}||f^{\epsilon}||^{2}_{H^{s}_{l+3}}
+\epsilon^{2-2s}||f^{\epsilon}||^{2}_{L^{1}_{l+5}}||f^{\epsilon}||^{2}_{H^{1}_{l+3}})
\\&\lesssim&
+C_{2}(f_{0})||\triangle_{s}g^{\epsilon}||^{2}_{L^{2}_{\gamma/2}}
+\frac{1}{\eta_{1}}||\triangle_{s}f^{\epsilon}||^{2}_{L^{2}_{6}}||g^{\epsilon}||^{2}_{\epsilon,3}\\
&&+\frac{1}{\eta_{1}}(||\triangle_{s}f^{\epsilon}||^{2}_{L^{2}_{2l+3}}||f^{\epsilon}||^{2}_{H^{s}_{l+1}}
+\epsilon^{2-2s}||\triangle_{s}f^{\epsilon}||^{2}_{L^{2}_{7}}||f^{\epsilon}||^{2}_{H^{1}_{l+1}})\\
&&+\frac{1}{\eta_{2}}||f^{\epsilon}||^{2}_{L^{1}_{2l+5}}||\triangle_{s}g^{\epsilon}||^{2}_{L^{2}_{1}}
+\frac{1}{\eta_{1}}|h|^{-(1+2s)}||f^{\epsilon}||^{2}_{L^{1}_{l+5}}||f^{\epsilon}||^{2}_{\epsilon,l+3}.
\eeno
where we have used the fact $||\langle \cdot \rangle^{-2}||_{L^{2}} \leq \sqrt{2} \pi$.
Integrating both sides from $0$ to $t$ with respect to time, we obtain
\begin{eqnarray}\label{integralwitht}
&&||\triangle_s g^\epsilon(t)||^{2}_{L^{2}}+ C_{1}(f_{0})\int^{t}_{0} ||\triangle_{s}g^{\epsilon}(r)||^{2}_{\epsilon,\gamma/2}dr
-2\eta_{2}\int^{t}_{0}||\triangle_{s}f^{\epsilon}(r)||^{2}_{\epsilon,l+\gamma/2}dr
\\&\lesssim& ||\triangle_s g^\epsilon(0)||^{2}_{L^{2}}+
C_{2}(f_{0})\int^{t}_{0}||\triangle_{s}g^{\epsilon}(r)||^{2}_{L^{2}_{\gamma/2}}dr
+ \frac{1}{\eta_{1}}\int^{t}_{0}||\triangle_{s}f^{\epsilon}(r)||^{2}_{L^{2}_{6}}||g^{\epsilon}(r)||^{2}_{\epsilon,3}dr
\nonumber
\\&&+\frac{1}{\eta_{1}}\int^{t}_{0}(||\triangle_{s}f^{\epsilon}(r)||^{2}_{L^{2}_{2l+3}}||f^{\epsilon}(r)||^{2}_{H^{s}_{l+1}}
+\epsilon^{2-2s}||\triangle_{s}f^{\epsilon}(r)||^{2}_{L^{2}_{7}}||f^{\epsilon}(r)||^{2}_{H^{1}_{l+1}})dr \nonumber \\
&&+ \frac{1}{\eta_{2}}\int^{t}_{0}||f^{\epsilon}(r)||^{2}_{L^{1}_{2l+5}}||\triangle_{s}g^{\epsilon}(r)||^{2}_{L^{2}_{1}}dr
+\frac{1}{\eta_{1}}|h|^{-(1+2s)}\int^{t}_{0}||f^{\epsilon}(r)||^{2}_{L^{1}_{l+5}}||f^{\epsilon}(r)||^{2}_{\epsilon,l+3}dr.
\nonumber
\end{eqnarray}
Integrating both sides on the Ball $B(0, \f12)$ with respect to the variable $h$, noting that \\$\int_{|h|\leq \frac{1}{2}} |h|^{-(1+2s)} dh$ is finite, thanks to the facts \eqref{eqv1} and \eqref{eqv2},  taking a small enough $\eta_{2}$, we derive that
\begin{eqnarray}\label{integralwithth}
&&|| g^\epsilon(t)||^{2}_{H^{s}}+\frac{C_{1}(f_{0})}{2}\int^{t}_{0} \int_{|h|\leq\frac{1}{2}}||\triangle_{s}g^{\epsilon}(r)||^{2}_{\epsilon,\gamma/2}dhdr\\
&\lesssim& || g^\epsilon(0)||^{2}_{H^{s}}+|| g^\epsilon(t)||^{2}_{L^{2}}
+C_{2}(f_{0})\int^{t}_{0}||g^{\epsilon}(r)||^{2}_{H^{s}_{1}}dr \nonumber \\
&& + \frac{1}{\eta_{1}}\int^{t}_{0}||f^{\epsilon}(r)||^{2}_{H^{s}_{6}}||g^{\epsilon}(r)||^{2}_{\epsilon,3}dr
 \nonumber\\
&&+\frac{1}{\eta_{1}}\int^{t}_{0}(||f^{\epsilon}(r)||^{2}_{H^{s}_{2l+3}}||f^{\epsilon}(r)||^{2}_{H^{s}_{l+1}}
+\epsilon^{2-2s}||f^{\epsilon}(r)||^{2}_{H^{s}_{7}}||f^{\epsilon}(r)||^{2}_{H^{1}_{l+1}})dr \nonumber \\
&&+ \frac{1}{\eta_{2}}\int^{t}_{0}||f^{\epsilon}(r)||^{2}_{L^{1}_{2l+5}}||g^{\epsilon}(r)||^{2}_{H^{s}_{1}}dr
+\frac{1}{\eta_{1}}\int^{t}_{0}||f^{\epsilon}(r)||^{2}_{L^{1}_{l+5}}||f^{\epsilon}(r)||^{2}_{\epsilon,l+3}dr.
\nonumber
\end{eqnarray}
Using the fact $||f^{\epsilon}||^{2}_{H^{s}_{2l+3}}||f^{\epsilon}||^{2}_{H^{s}_{l+1}} \leq ||f^{\epsilon}||^{2}_{H^{s}_{l}}||f^{\epsilon}||^{2}_{H^{s}_{2l+4}}$, substituting into the uniform bound of $||f^{\epsilon}||_{L^{1}_{2l+5}}$ and $||f^{\epsilon}||_{L^{2}_{l}}$, we have
\begin{eqnarray}\label{hslclose1}
&&|| f^\epsilon(t)||^{2}_{H^{s}_{l}}+\frac{C_{1}(f_{0})}{2}\int^{t}_{0} \int_{|h|\leq\frac{1}{2}}||\triangle_{s}g^{\epsilon}(r)||^{2}_{\epsilon,\gamma/2}dhdr
\\&\lesssim& || f_{0}||^{2}_{H^{s}_{l}}+C(||f_{0}||_{L^{2}_{l}},||f_{0}||_{L^{1}_{2l+5}})
+C(||f_{0}||_{L^{1}_{2l+5}},||f_{0}||_{L \log L})\int^{t}_{0}||f^{\epsilon}(r)||^{2}_{\epsilon,l+3}dr \nonumber \\
&& + C(||f_{0}||_{L^{1}_{1}},||f_{0}||_{L \log L})\int^{t}_{0}||f^{\epsilon}(r)||^{2}_{H^{s}_{l}}||f^{\epsilon}(r)||^{2}_{\epsilon,2l+4}dr.
 \nonumber
\end{eqnarray}
Actually, inequality (\ref{hslclose1}) holds true on any bounded interval. Therefore,  for any $t_{1}<t_{2}$ with $t_{2}-t_{1} \leq 2$, we have
\begin{eqnarray}\label{hslclose3}
&& || f^\epsilon(t_{2})||^{2}_{H^{s}_{l}}+\frac{C_{1}(f_{0})}{2}\int^{t_{2}}_{t_{1}} \int_{|h|\leq\frac{1}{2}}||\triangle_{s}g^{\epsilon}(r)||^{2}_{\epsilon,\gamma/2}dhdr\\
&\lesssim& || f^\epsilon(t_{1})||^{2}_{H^{s}_{l}}+C(||f_{0}||_{L^{2}_{l}},||f_{0}||_{L^{1}_{2l+5}})
+C(||f_{0}||_{L^{1}_{2l+5}},||f_{0}||_{L \log L})\int^{t_{2}}_{t_{1}}||f^{\epsilon}(r)||^{2}_{\epsilon,l+3}dr \nonumber \\
&&+ C(||f_{0}||_{L^{1}_{1}},||f_{0}||_{L \log L})\int^{t_{2}}_{t_{1}}||f^{\epsilon}(r)||^{2}_{H^{s}_{l}}||f^{\epsilon}(r)||^{2}_{\epsilon,2l+4}dr.
 \nonumber
\end{eqnarray}
By Gronwall's inequality (\ref{gronwall2}) and uniform upper bound (\ref{l22lclosef}) for integral of $||f^{\epsilon}||^{2}_{\epsilon,l}$ on any bounded interval, we arrive at
\begin{eqnarray}\label{hslclose4}
|| f^\epsilon(t_{2})||^{2}_{H^{s}_{l}}  &\lesssim& C(||f_{0}||_{L^{1}_{4l+13}},||f_{0}||_{L^{2}_{2l+4}})
\{|| f^\epsilon(t_{1})||^{2}_{H^{s}_{l}}+C(||f_{0}||_{L^{1}_{2l+11}},||f_{0}||_{L^{2}_{l+3}}) \}.
\end{eqnarray}
Also from (\ref{l22lclosef}), we conclude that, in any unit interval $[t,t+1]$, there exists at least one point $t_{*}$ such that
\begin{eqnarray}\label{hslclose5}
|| f^\epsilon(t_{*})||^{2}_{H^{s}_{l}}  &\lesssim& C(||f_{0}||_{L^{1}_{2l+5}},||f_{0}||_{L^{2}_{l}}).
\end{eqnarray}
Combining (\ref{hslclose4}) and (\ref{hslclose5}), we have
\begin{eqnarray}\label{hslclose6}
|| f^\epsilon(t)||^{2}_{H^{s}_{l}}  &\lesssim& C(||f_{0}||_{H^{s}_{l}},||f_{0}||_{L^{1}_{4l+13}},||f_{0}||_{L^{2}_{2l+4}}).
\end{eqnarray}
Together with (\ref{hslclose3}), we finally arrive at
\begin{eqnarray}\label{hslclose7}
&&|| f^\epsilon(t)||^{2}_{H^{s}_{l}}+\frac{C_{1}(f_{0})}{2}\int^{t+1}_{t} \int_{|h|\leq\frac{1}{2}}||\triangle_{s}g^{\epsilon}(r)||^{2}_{\epsilon,\gamma/2}dhdr \\
&\lesssim& C(||f_{0}||_{H^{s}_{l}},||f_{0}||_{L^{1}_{4l+13}},||f_{0}||_{L^{2}_{2l+4}}). \nonumber
\end{eqnarray}
By interpolation theory, there holds
\beno
||f_{0}||_{L^{2}_{2l+4}} \lesssim ||f_{0}||_{H^{s}_{l}} + ||f_{0}||_{H^{-2}_{\phi(s,l)}} \lesssim ||f_{0}||_{H^{s}_{l}} + ||f_{0}||_{L^{1}_{\phi(s,l)}},
\eeno
where $\phi(s,l) = \frac{(2l+4)(2+s)-2l}{s} \geq 4l+13$. Therefore we have
\begin{eqnarray}\label{hslclose8}
|| f^\epsilon(t)||^{2}_{H^{s}_{l}} &\lesssim& C(||f_{0}||_{H^{s}_{l}},||f_{0}||_{L^{1}_{\phi(s,l)}}).
\end{eqnarray}

\vskip 0.3cm

\noindent {\textit {Step 4: Propagation of $H^{N}_{l}$ norm when $N \geq 1$.}}

We prove the propagation by induction on $N$. Let $m \geq 1$ be an integer. Suppose inequality (\ref{uniformhn}) holds true for all $N \leq m-1$, we now prove that it is also valid for $N = m$.

Set $g^\epsilon =\pa^\alpha_{v} f^\epsilon \langle v\rangle^l$ with $|\alpha|
\leq m$, then $g^\epsilon$ solves
\begin{eqnarray}\label{hmlequation}
\pa_t g^\epsilon &=&M^{\epsilon}(f^\epsilon, g^\epsilon )+[M^{\epsilon}(f^\epsilon, \pa^{\alpha}_v f^\epsilon)\langle v\rangle^l-M^{\epsilon}(f^\epsilon,
\pa^{\alpha}_v f^\epsilon\langle v\rangle^l)]\\
&&+\sum_{|\alpha_1|\geq1, \alpha_1+\alpha_2=\alpha}
\binom{\alpha}{\alpha_{1}} M^{\epsilon}(\pa^{\alpha_1}_v f^\epsilon, \pa^{\alpha_2}_v f^\epsilon)
\langle v \rangle^l. \nonumber
\end{eqnarray}
By the coercivity estimate (\ref{coercivityboltz}), we have
\begin{eqnarray}\label{hmls1}
\langle M^{\epsilon}(f^\epsilon, g^\epsilon ), g^{\epsilon} \rangle \leq -C_{1}(f_{0})||g^{\epsilon}||^{2}_{\epsilon,\gamma/2} +
C_{2}(f_{0})||g^{\epsilon}||^{2}_{L^{2}_{\gamma/2}}.
\end{eqnarray}
By the commutator estimate (\ref{commcboltz}), we have
\beno
|\langle M^{\epsilon}(f^\epsilon, \pa^{\alpha}_v f^\epsilon)\langle v\rangle^l-M^{\epsilon}(f^\epsilon,
\pa^{\alpha}_v f^\epsilon\langle v\rangle^l), g^{\epsilon} \rangle| \lesssim ||f^{\epsilon}||_{L^{1}_{2l+5}}||g^{\epsilon}||_{\epsilon,\gamma/2}
||g^{\epsilon}||_{L^{2}_{\gamma/2}},
\eeno
which implies, for any $\eta_{1} > 0$,
\begin{eqnarray}\label{hmls2}
&&|\langle M^{\epsilon}(f^\epsilon, \pa^{\alpha}_v f^\epsilon)\langle v\rangle^l-M^{\epsilon}(f^\epsilon,
\pa^{\alpha}_v f^\epsilon\langle v\rangle^l), g^{\epsilon} \rangle| - \eta_{1} ||g^{\epsilon}||^{2}_{\epsilon,\gamma/2}
\\&\lesssim&  \frac{1}{\eta_{1}}||f^{\epsilon}||^{2}_{L^{1}_{2l+5}}||g^{\epsilon}||^{2}_{L^{2}_{\gamma/2}}. \nonumber
\end{eqnarray}

For the remaining terms in the right hand of (\ref{hmlequation}) with $|\alpha_{1}| \geq 1$, we split each of them into two terms:
\beno
M^{\epsilon}(\pa^{\alpha_1}_v f^\epsilon, \pa^{\alpha_2}_v f^\epsilon)
\langle v \rangle^l &=& \{M^{\epsilon}(\pa^{\alpha_1}_v f^\epsilon, \pa^{\alpha_2}_v f^\epsilon)
\langle v \rangle^l - M^{\epsilon}(\pa^{\alpha_1}_v f^\epsilon, \pa^{\alpha_2}_v f^\epsilon \langle v \rangle^l)\} \\
&&+M^{\epsilon}(\pa^{\alpha_1}_v f^\epsilon, \pa^{\alpha_2}_v f^\epsilon \langle v \rangle^l)\\
&\eqdefa& \mathfrak{I}_{1}+\mathfrak{I}_{2}.
\eeno
By the commutator estimate (\ref{commcboltz}), for the case $|\alpha_{1}|=|\alpha| \leq m$,  we have
\beno
|\langle \mathfrak{I}_{1}, g^{\epsilon} \rangle| \lesssim ||\pa^{\alpha_1}_v f^{\epsilon}||_{L^{1}_{2l+5}}||f^{\epsilon}||_{\epsilon,l+\gamma/2}
||g^{\epsilon}||_{L^{2}_{\gamma/2}}
\lesssim ||f^{\epsilon}||_{H^{m}_{2l+7}}||f^{\epsilon}||_{\epsilon,l+\gamma/2}
||g^{\epsilon}||_{L^{2}_{\gamma/2}},
\eeno
which implies, for any $\eta_{2} > 0$,
\begin{eqnarray}\label{hmls4}
|\langle \mathfrak{I}_{1}, g^{\epsilon} \rangle| - \eta_{2}||g^{\epsilon}||^{2}_{\epsilon,\gamma/2}
\lesssim \frac{1}{\eta_{2}}||f^{\epsilon}||^{2}_{H^{m}_{2l+7}}||f^{\epsilon}||^{2}_{\epsilon,l+\gamma/2}
\end{eqnarray}
For the case $1 \leq |\alpha_{1}|\leq |\alpha| - 1 \leq  m-1$, we have
\beno
|\langle \mathfrak{I}_{1}, g^{\epsilon} \rangle| &\lesssim& ||\pa^{\alpha_1}_v f^{\epsilon}||_{L^{1}_{2l+5}}||\pa^{\alpha_2}_v f^{\epsilon}||_{\epsilon,l+\gamma/2}
||g^{\epsilon}||_{L^{2}_{\gamma/2}} \\
&\lesssim&(||f^{\epsilon}||_{H^{1}_{2l+7}}||f^{\epsilon}||_{H^{m}_{l+\gamma/2}}\textbf{1}_{m \geq 2}+
||f^{\epsilon}||_{H^{m-1}_{2l+7}}||f^{\epsilon}||_{H^{m-1}_{l+\gamma/2}})
||g^{\epsilon}||_{L^{2}_{\gamma/2}},
\eeno
which implies, for any $\eta_{2} > 0$,
\begin{eqnarray}\label{hmls5}
&&|\langle \mathfrak{I}_{1}, g^{\epsilon} \rangle| - \eta_{2}||g^{\epsilon}||^{2}_{\epsilon,\gamma/2} \nonumber
\\&\lesssim&  \frac{1}{\eta_{2}}
(||f^{\epsilon}||^{2}_{H^{1}_{2l+7}}||f^{\epsilon}||^{2}_{H^{m}_{l+\gamma/2}}\textbf{1}_{m \geq 2}+
||f^{\epsilon}||^{2}_{H^{m-1}_{2l+7}}||f^{\epsilon}||^{2}_{H^{m-1}_{l+\gamma/2}}). \nonumber
\end{eqnarray}
By the upper bound estimate (\ref{upcboltz}), for the case $|\alpha_{1}|=|\alpha| \leq m$, we have,
\beno
|\langle \mathfrak{I}_{2}, g^{\epsilon} \rangle| \lesssim ||\pa^{\alpha_1}_v f^{\epsilon}||_{L^{1}_{4}}
(||f^{\epsilon}||_{H^{s}_{l+3}}||g^{\epsilon}||_{H^{s}_{\gamma/2}}+
\epsilon^{2-2s}||f^{\epsilon}||_{H^{1}_{l+3}}||g^{\epsilon}||_{H^{1}_{\gamma/2}}),
\eeno
which implies, for any $\eta_{3}>0$,
\begin{eqnarray}\label{hmls6}
|\langle \mathfrak{I}_{2}, g^{\epsilon} \rangle| - \eta_{3} ||g^{\epsilon}||^{2}_{\epsilon,\gamma/2} \lesssim \frac{1}{\eta_{3}}||f^{\epsilon}||^{2}_{H^{m}_{6}}||f^{\epsilon}||^{2}_{\epsilon,l+3}.
\end{eqnarray}
While for the case $1 \leq |\alpha_{1}|\leq |\alpha| - 1 \leq  m-1$, we similarly have, for any $\eta_{3}>0$,
\begin{eqnarray}\label{hmls7}
|\langle \mathfrak{I}_{2}, g^{\epsilon} \rangle| - \eta_{3} ||g^{\epsilon}||^{2}_{\epsilon,\gamma/2} \lesssim   \frac{1}{\eta_{3}}(||f^{\epsilon}||^{2}_{H^{1}_{6}}||f^{\epsilon}||^{2}_{H^{m}_{l+3}}\textbf{1}_{m \geq 2}+
||f^{\epsilon}||^{2}_{H^{m-1}_{6}}||f^{\epsilon}||^{2}_{H^{m-1}_{l+3}}).
\end{eqnarray}
Now choosing suitable $\eta_{1}$ in (\ref{hmls2}), $\eta_{2}$ in (\ref{hmls4}) and (\ref{hmls5}), and $\eta_{3}$ in (\ref{hmls6}) and (\ref{hmls7}), we have
\begin{eqnarray}\label{hmlsclose1}
&&\frac{d}{dt}||f^{\epsilon}||^{2}_{H^{m}_{l}} + \frac{C_{1}(f_{0})}{2}||f^{\epsilon}||^{2}_{\epsilon, m, l+\gamma/2} \\&\lesssim&
C(||f_{0}||_{L^{1}_{2l+5}}, ||f_{0}||_{L^{2}}) ||f^{\epsilon}||^{2}_{H^{m}_{l+\gamma/2}}  \nonumber\\
&&+ C(C_{1}(f_{0}))\{||f^{\epsilon}||^{2}_{H^{m}_{2l+7}}||f^{\epsilon}||^{2}_{\epsilon,l+3} + ||f^{\epsilon}||^{2}_{H^{1}_{2l+7}}||f^{\epsilon}||^{2}_{H^{m}_{l+3}}\textbf{1}_{m \geq 2}+
||f^{\epsilon}||^{2}_{H^{m-1}_{2l+7}}||f^{\epsilon}||^{2}_{H^{m-1}_{l+3}} \}. \nonumber
\end{eqnarray}

When $m=1$, inequality (\ref{hmlsclose1}) reduces to
\beno
&&\frac{d}{dt}||f^{\epsilon}||^{2}_{H^{1}_{l}} + \frac{C_{1}(f_{0})}{2}||f^{\epsilon}||^{2}_{\epsilon, 1, l+\gamma/2} \\&\lesssim&
C(||f_{0}||_{L^{1}_{2l+5}}, ||f_{0}||_{L^{2}}) ||f^{\epsilon}||^{2}_{H^{1}_{l+\gamma/2}} \\
&& + C(C_{1}(f_{0}))\{||f^{\epsilon}||^{2}_{H^{1}_{2l+7}}||f^{\epsilon}||^{2}_{\epsilon,l+3} +
||f^{\epsilon}||^{2}_{L^{2}_{2l+7}}||f^{\epsilon}||^{2}_{L^{2}_{l+3}} \}.
\eeno
Remembering that
\beno
||f^{\epsilon}||^{2}_{\epsilon,l+3} = ||f^{\epsilon}||^{2}_{H^{s}_{l+3}} + \epsilon^{2-2s}||f^{\epsilon}||^{2}_{H^{1}_{l+3}},
\eeno
by interpolation theory and the basic inequality (\ref{basicineq}), for any $\eta>0$, we have
\beno
||f^{\epsilon}||^{2}_{H^{1}_{2l+7}} &\leq& ||f^{\epsilon}||^{2(1-s)}_{H^{1+s}_{l+\gamma/2}}||f^{\epsilon}||^{2s}_{H^{s}_{\psi(l)}}\\
&\leq& \eta ||f^{\epsilon}||^{2}_{H^{1+s}_{l+\gamma/2}} + s(\frac{\eta}{1-s})^{-\frac{1-s}{s}}||f^{\epsilon}||^{2}_{H^{s}_{x(l)}},
\eeno
where $x(l) = \frac{2l+7}{s} - \frac{1-s}{s}(l+\frac{\gamma}{2})$,
and
\beno
||f^{\epsilon}||^{2}_{H^{1}_{2l+7}}||f^{\epsilon}||^{2}_{H^{1}_{l+3}} &\leq& ||f^{\epsilon}||^{4}_{H^{1}_{2l+7}} \\
&\leq& ||f^{\epsilon}||^{\frac{4-4s}{2-s}}_{H^{2}_{l+\gamma/2}}||f^{\epsilon}||^{\frac{4}{2-s}}_{H^{s}_{\psi^{\prime}(l)}} \\
&\leq&\eta||f^{\epsilon}||^{2}_{H^{2}_{l+\gamma/2}} + \frac{s}{2-s}(\frac{2
\eta-s\eta}{2-2s})^{-\frac{2-2s}{s}}||f^{\epsilon}||^{4/s}_{H^{s}_{\tilde{x}(l)}},
\eeno
where $\tilde{x}(l) = (2l+7)(2-s) - (1-s)(l+\gamma/2) \leq x(l)$.
Taking small enough $\eta$, we finally have
\beno
\frac{d}{dt}||f^{\epsilon}||^{2}_{H^{1}_{l}} + \frac{C_{1}(f_{0})}{4}||f^{\epsilon}||^{2}_{\epsilon, 1, l+\gamma/2} &\lesssim&
C(||f^{\epsilon}||_{H^{s}_{x(l)}}, ||f_{0}||_{L^{1}_{2l+5}}).
\eeno
Then by Gronwall's inequality and the uniform upper bound (\ref{hslclose8}) of $H^{s}$ norm, we arrive at
\beno
&&||f^{\epsilon}(t)||^{2}_{H^{1}_{l}} + \frac{C_{1}(f_{0})}{4}\int^{t+1}_{t}||f^{\epsilon}(r)||^{2}_{\epsilon, 1, l+\gamma/2}dr
\lesssim C(||f_{0}||_{L^{1}_{\phi(s,x(l))}},||f_{0}||_{H^{s}_{x(l)}},||f_{0}||_{H^{1}_{l}}).
\eeno
Once again by interpolation theory, there holds
\beno
||f_{0}||_{H^{s}_{x(l)}} &\lesssim& ||f_{0}||_{H^{1}_{l}} + ||f_{0}||_{L^{1}_{y(l)}},
\eeno
where $y(l) = \frac{3x(l) - (s+2)l}{1-s}$. By setting $\phi(1,l) = \max\{\phi(s,x(l)),y(l)\}$, we have
\beno
||f^{\epsilon}(t)||^{2}_{H^{1}_{l}} + \frac{C_{1}(f_{0})}{4}\int^{t+1}_{t}||f^{\epsilon}(r)||^{2}_{\epsilon, 1, l+\gamma/2}dr
\lesssim C(||f_{0}||_{L^{1}_{\phi(1,l)}},||f_{0}||_{H^{1}_{l}}).
\eeno

When $m \geq 2$, $||f^{\epsilon}||^{2}_{H^{1}_{2l+7}}$ has uniform bound by assumption. According to the interpolation inequality and the basic inequality (\ref{basicineq}), one has
\begin{eqnarray}\label{hmhmsmminus1}
||f^{\epsilon}||^{2}_{H^{m}_{2l+7}} \leq \eta ||f^{\epsilon}||^{2}_{H^{m+s}_{l+\gamma/2}} + (\frac{1+s}{s}\eta)^{-\frac{1}{s}} ||f^{\epsilon}||^{2}_{H_{z(l)}^{m-1}},
\end{eqnarray}
where $z(l) = 2l+7+\frac{l+7}{s}$.
With the fact $||f^{\epsilon}||_{\epsilon,l+3}\lesssim ||f^{\epsilon}||_{H^{1}_{l+3}}$, we finally arrive at
\beno
&&\frac{d}{dt}||f^{\epsilon}||^{2}_{H^{m}_{l}} + \frac{C_{1}(f_{0})}{4}||f^{\epsilon}||^{2}_{\epsilon, m, l+\gamma/2} \lesssim
C(||f_{0}||_{L^{1}_{2l+5}},||f^{\epsilon}||_{H_{z(l)}^{m-1}}).
\eeno
Then by Gronwall's inequality and the assumed uniform bound of  $H^{m-1}$ norm,
\beno
||f^{\epsilon}(t)||^{2}_{H^{m}_{l}} + \frac{C_{1}(f_{0})}{4}\int^{t+1}_{t}||f^{\epsilon}(r)||^{2}_{\epsilon, m, l+\gamma/2}dr
\lesssim C(||f_{0}||_{L^{1}_{\phi(m-1,z(l))}},||f_{0}||_{H^{m-1}_{z(l)}},||f_{0}||_{H^{m}_{l}}).
\eeno
By interpolation theory, there holds
\beno
||f_{0}||_{H^{m-1}_{z(l)}} &\lesssim& ||f_{0}||_{H^{m}_{l}} + ||f_{0}||_{L^{1}_{u(m,l)}},
\eeno
where $u(m,l) = (m+2)z(l)-(m+1)l$. Now by setting $\phi(m,l) = \max\{u(m,l),\phi(m-1,z(l))\}$, we arrive at
\begin{eqnarray}\label{hmlsclosef2}
||f^{\epsilon}(t)||^{2}_{H^{m}_{l}} + \frac{C_{1}(f_{0})}{4}\int^{t+1}_{t}||f^{\epsilon}(r)||^{2}_{\epsilon, m, l+\gamma/2}dr
\lesssim C(||f_{0}||_{L^{1}_{\phi(m,l)}},||f_{0}||_{H^{m}_{l}}).
\end{eqnarray}
The proof of theorem \ref{main2} is complete now.

\begin{rmk}
Since $L^{1} \subset H^{-m}$ if $m>3/2$, one can obtain lower weight requirement in the space $L^{1}$.
We use $H^{-2}$ as one interpolation space just for a neat expression. For the same reason, we replace $\gamma$ with $2$.
\end{rmk}

\section{Error estimates to the approximation}
In this section, we prove the last two theorems stated in section 1. We first give a proof to theorem \ref{main3}.

\noindent {\bf Proof of Theorem \ref{main3}:} For each $0 < \epsilon \leq \sqrt{2}/2$, we define $F^{\epsilon}_{R}$ and $Q_{\epsilon}$ respectively as follows:
\begin{eqnarray}\label{errorf}
F^{\epsilon}_{R} = \frac{f^{\epsilon}-f}{\epsilon^{3-2s}},
\end{eqnarray}
\begin{eqnarray}\label{qlow}
Q_{\epsilon} = Q - Q^{\epsilon}.
\end{eqnarray}
Take the difference between equations (\ref{homb}) and (\ref{approximated}), and divide both sides by $\epsilon^{3-2s}$, we have
\ben\label{errorequation}
\partial _t F^{\epsilon}_{R} = \Upsilon(f^{\epsilon}) + Q(f^{\epsilon},F^{\epsilon}_{R}) + Q(F^{\epsilon}_{R},f)
\een
where
\begin{eqnarray}\label{upsilonf}
\Upsilon(f^{\epsilon}) = \frac{1}{\epsilon}[Q_{L}(f^{\epsilon},f^{\epsilon})-\epsilon^{2s-2}Q_{\epsilon}(f^{\epsilon},f^{\epsilon})].
\end{eqnarray}
We now show that $L^{1}_{2l}$ norm of $F^{\epsilon}_{R}$   is bounded by the initial datum $f_{0}$ and time $t$. According to (\ref{errorequation}), we have
\ben \label{l1norm}
\frac{d}{dt}||F^{\epsilon}_{R}||_{L^{1}_{2l}} &=& \langle  \Upsilon(f^{\epsilon}) + Q(f^{\epsilon},F^{\epsilon}_{R}) + Q(F^{\epsilon}_{R},f), sgn(F^{\epsilon}_{R}) \langle v \rangle^{2l}  \rangle \\
&\eqdefa&\mathfrak{I}_1+\mathfrak{I}_2+\mathfrak{I}_3. \nonumber
\een
Thanks to lemma (7.1) in the Appendix of \cite{he1}, we have
\begin{eqnarray}\label{l1I1f}
\mathfrak{I}_1  \leq C(7.1)||f^{\epsilon}||^{2}_{H^{5}_{2l + \gamma+12}}.
\end{eqnarray}

Now we deal with $\mathfrak{I}_{2}$, note that
\beno
\mathfrak{I}_{2}&=&\int_{\R^6\times\SS^2}b(\cos\theta)|v-v_{*}|^{\gamma}f^{\epsilon}_{*}F^{\epsilon}_{R}
(sgn(F^{\epsilon}_{R}(v^{\prime}))\langle v^{\prime} \rangle^{2l} - sgn(F^{\epsilon}_{R}(v))\langle v \rangle^{2l})d\sigma dv_{*}dv \\
&\leq& \int_{\R^6\times\SS^2}b(\cos\theta)|v-v_{*}|^{\gamma}f^{\epsilon}_{*}|F^{\epsilon}_{R}|
(\langle v^{\prime} \rangle^{2l} - \langle v \rangle^{2l})d\sigma dv_{*}dv\\
&\eqdefa&\mathfrak{I}_{2,1}+\mathfrak{I}_{2,2},
\eeno
where
\beno
\mathfrak{I}_{2,1}=\int_{\R^6\times\SS^2}b(\cos\theta)|v-v_{*}|^{\gamma}f^{\epsilon}_{*}|F^{\epsilon}_{R}|
(\langle v^{\prime} \rangle^{2l} + \langle v^{\prime}_{*} \rangle^{2l}- \langle v \rangle^{2l}-\langle v_{*} \rangle^{2l})d\sigma dv_{*}dv,
\eeno
and
\beno
\mathfrak{I}_{2,2}=-\int_{\R^6\times\SS^2}b(\cos\theta)|v-v_{*}|^{\gamma}f^{\epsilon}_{*}|F^{\epsilon}_{R}|
(\langle v^{\prime}_{*} \rangle^{2l}- \langle v_{*} \rangle^{2l})d\sigma dv_{*}dv.
\eeno
According to proposition \ref{propsition1}, we have
\begin{eqnarray}\label{l1I21f}
\quad\quad\quad \mathfrak{I}_{2,1} \leq -\frac{1}{4}A_{2}(||f^{\epsilon}||_{L^{1}}||F^{\epsilon}_{R}||_{L^{1}_{2l+\gamma}}+
||f^{\epsilon}||_{L^{1}_{2l+\gamma}}||F^{\epsilon}_{R}||_{L^{1}})+
2^{2l+1}A_{2}||f^{\epsilon}||_{L^{1}_{2l}}||F^{\epsilon}_{R}||_{L^{1}_{2l}}.
\end{eqnarray}
Now we turn to $\mathfrak{I}_{2,2}$. Recall that
\beno
\langle v^{\prime}_{*} \rangle^{2l}- \langle v_{*} \rangle^{2l} &=& (v^{\prime}_{*}- v_{*})\cdot(\nabla\langle \cdot \rangle^{2l})(v_{*}) \\
&&+\int^{1}_{0}\frac{1-\kappa}{2}(v^{\prime}_{*}- v_{*})\otimes(v^{\prime}_{*}- v_{*}):(\nabla^{2}\langle \cdot \rangle^{2l})(v(\kappa))d\kappa,
\eeno
where $v(\kappa) = v_{*}+\kappa(v^{\prime}_{*}- v_{*}) = v_{*}-\kappa(v^{\prime}- v)$.
By symmetry,
\begin{eqnarray}\label{symmtry}
\int_{\SS^2}b(\cos\theta)(v^{\prime}_{*}- v_{*})d\sigma = (v-v_{*})\int_{\SS^{2}}b(\cos\theta)\sin^{2}\frac{\theta}{2}d\sigma.
\end{eqnarray}
Observe that the matrix $\nabla^{2}\langle \cdot \rangle^{2l}$ is positive definite, we are only left with
\begin{eqnarray}\label{l1I22f}
\mathfrak{I}_{2,2}&\leq& 2l\int_{\R^6\times\SS^2}b(\cos\theta)\sin^{2}\frac{\theta}{2}|v-v_{*}|^{\gamma+1}\langle v_{*} \rangle^{2l-1}
f^{\epsilon}_{*}|F^{\epsilon}_{R}|d\sigma dv_{*}dv \\
&\leq& A_{2}l||f^{\epsilon}||_{L^{1}_{2l+\gamma}}||F^{\epsilon}_{R}||_{L^{1}_{\gamma+1}}. \nonumber
\end{eqnarray}

Split $\mathfrak{I}_{3}$ into two parts:
\beno
\mathfrak{I}_{3}&=&\int_{\R^6\times\SS^2}b(\cos\theta)|v-v_{*}|^{\gamma}F^{\epsilon}_{R}(v_{*})f
(sgn(F^{\epsilon}_{R}(v^{\prime}))\langle v^{\prime} \rangle^{2l} - sgn(F^{\epsilon}_{R}(v))\langle v \rangle^{2l})d\sigma dv_{*}dv \\
&=& \int_{\R^6\times\SS^2}B\textbf{1}_{\theta\leq|v-v_{*}|^{-\alpha}}F^{\epsilon}_{R}(v_{*})f
(sgn(F^{\epsilon}_{R}(v^{\prime}))\langle v^{\prime} \rangle^{2l} - sgn(F^{\epsilon}_{R}(v))\langle v \rangle^{2l})d\sigma dv_{*}dv\\
&&+\int_{\R^6\times\SS^2}B\textbf{1}_{\theta\geq|v-v_{*}|^{-\alpha}}F^{\epsilon}_{R}(v_{*})f
(sgn(F^{\epsilon}_{R}(v^{\prime}))\langle v^{\prime} \rangle^{2l} - sgn(F^{\epsilon}_{R}(v))\langle v \rangle^{2l})d\sigma dv_{*}dv\\
&\eqdefa&\mathfrak{I}_{3,1}+\mathfrak{I}_{3,2},
\eeno
where $\alpha = \frac{\gamma+2}{2-2s}$. \\
For $\mathfrak{I}_{3,1}$, we have
\beno
\mathfrak{I}_{3,1}&=&\int_{\R^6\times\SS^2}B\textbf{1}_{\theta\leq|v-v_{*}|^{-\alpha}}F^{\epsilon}_{R}(v_{*})
(sgn(F^{\epsilon}_{R}(v^{\prime}))f^{\prime}\langle v^{\prime} \rangle^{2l} - sgn(F^{\epsilon}_{R}(v))f\langle v \rangle^{2l})d\sigma dv_{*}dv\\
&&+\int_{\R^6\times\SS^2}b(\cos\theta)\textbf{1}_{\theta\leq|v-v_{*}|^{-\alpha}}|v-v_{*}|^{\gamma}F^{\epsilon}_{R}(v_{*})(f-f^{\prime})
sgn(F^{\epsilon}_{R}(v^{\prime}))\langle v^{\prime} \rangle^{2l} d\sigma dv_{*}dv\\
&\eqdefa&\mathfrak{I}_{3,1,1}+\mathfrak{I}_{3,1,2}.
\eeno
By cancellation lemma,
\begin{eqnarray}\label{l1I311f}
|\mathfrak{I}_{3,1,1}|\leq C(cancel)||f||_{L^{1}_{2l+\gamma}}||F^{\epsilon}_{R}||_{L^{1}_{\gamma}},
\end{eqnarray}
where $C(cancel) = 2^{\frac{5+\gamma}{2}}A_{2}$.
For the term $\mathfrak{I}_{3,1,2}$, apply Taylor expansion:
\beno
f(v)-f(v^{\prime}) &=& (v-v^{\prime})\cdot \nabla_{v} f(v^{\prime})
+\int^{1}_{0}\frac{1-\kappa}{2}(v-v^{\prime})\otimes(v-v^{\prime}):\nabla^{2}_{v}f(v(\kappa))d\kappa,
\eeno
where $v(\kappa) = v^{\prime}+\kappa(v-v^{\prime})$.
For fixed $v_{*}$, it is easy to check
\beno
\int_{\R^3\times\SS^2}b(\cos\theta)\textbf{1}_{\theta\leq|v-v_{*}|^{-\alpha}}|v-v_{*}|^{\gamma}(v-v^{\prime})\cdot \nabla_{v} f(v^{\prime})
sgn(F^{\epsilon}_{R}(v^{\prime}))\langle v^{\prime} \rangle^{2l} d\sigma dv=0.
\eeno
Thus we are only left with
\begin{eqnarray}\label{l1I312}
|\mathfrak{I}_{3,1,2}|&\leq&\int^{1}_{0}\int_{\R^6\times\SS^2}\frac{1-\kappa}{2}b(\cos\theta)\sin^{2}\frac{\theta}{2}\textbf{1}_{\theta\leq|v-v_{*}|^{-\alpha}}
|v-v_{*}|^{\gamma+2}  \\
&&\times F^{\epsilon}_{R}(v_{*})|\nabla^{2}_{v}f(v(\kappa))|\langle v^{\prime} \rangle^{2l} d\kappa d\sigma dv_{*}dv. \nonumber
\end{eqnarray}
Set $u = v^{\prime}+\kappa(v-v^{\prime})$, then we have
\beno
\langle v^{\prime} \rangle^{2} &=&1+|v^{\prime}|^{2}=1+|v^{\prime}+\kappa(v-v^{\prime})-\kappa(v-v^{\prime})|^{2} \\
&\leq & 1+2|u|^{2}+2\kappa^{2}|v-v^{\prime}|^{2} \leq 2\langle u \rangle^{2} + 2\kappa^{2}|u-v_{*}|^{2},
\eeno
and
\beno
\langle v^{\prime} \rangle^{2l} \leq 2^{2l-1} \langle u \rangle^{2l} + 2^{2l-1}\kappa^{2l}|u-v_{*}|^{2l}\leq 2^{2l} \langle u \rangle^{2l} \langle v_{*} \rangle^{2l}.
\eeno
By the change of variable: $v \rightarrow u$, the Jacobian matrix is
\beno
\frac{du}{dv} = \frac{1+k}{2}(I+\frac{1-k}{1+k}\frac{v-v_{*}}{|v-v_{*}|}\otimes\sigma),
\eeno
with its Jacobian
\beno
|\frac{du}{dv}| = \frac{(1+k)^{3}}{8}(1+\frac{1-k}{1+k}\frac{v-v_{*}}{|v-v_{*}|}\cdot\sigma)\geq \frac{1}{8}.
\eeno
Thanks to $|u-v_{*}| \leq |v-v_{*}| \leq \sqrt{2}|u-v_{*}|$, we obtain
\begin{eqnarray}\label{l1I312f}
|\mathfrak{I}_{3,1,2}|&\leq& 2^{2l+3}\pi K \int_{\R^6}\int^{|u-v_{*}|^{-\alpha}\wedge\pi/2}_{0}\theta^{1-2s}|u-v_{*}|^{\gamma+2}\\
&&\times F^{\epsilon}_{R}(v_{*})|\nabla^{2}_{v}f(u)|\langle u \rangle^{2l}\langle v_{*} \rangle^{2l} d\theta dv_{*}du. \nonumber \\
&\leq& 2^{2l+2} \frac{\pi K}{1-s} ||\nabla^{2}_{v}f||_{L^{1}_{2l}}||F^{\epsilon}_{R}||_{L^{1}_{2l}}\nonumber \\
&\leq&2^{2l+\frac{5}{2}} \frac{\pi^{2} K}{1-s} ||f||_{H^{2}_{2l+2}}||F^{\epsilon}_{R}||_{L^{1}_{2l}}, \nonumber
\end{eqnarray}
where we have used the fact $||\langle \cdot \rangle^{-2}||_{L^{2}} \leq \sqrt{2} \pi$.

Now we turn to $\mathfrak{I}_{3,2}$. Note that
\beno
\mathfrak{I}_{3,2} &\leq& \int_{\R^6\times\SS^2}b(\cos\theta)\textbf{1}_{\theta\geq|v-v_{*}|^{-\alpha}}|v-v_{*}|^{\gamma}|F^{\epsilon}_{R}(v_{*})|f
(\langle v^{\prime} \rangle^{2l} + \langle v \rangle^{2l})d\sigma dv_{*}dv \\
&= & \int_{\R^6\times\SS^2}b(\cos\theta)\textbf{1}_{\theta\geq|v-v_{*}|^{-\alpha}}|v-v_{*}|^{\gamma}|F^{\epsilon}_{R}(v_{*})|f
(\langle v^{\prime} \rangle^{2l} - \langle v \rangle^{2l})d\sigma dv_{*}dv \\
&&+2\int_{\R^6\times\SS^2}b(\cos\theta)\textbf{1}_{\theta\geq|v-v_{*}|^{-\alpha}}|v-v_{*}|^{\gamma}|F^{\epsilon}_{R}(v_{*})|f
 \langle v \rangle^{2l}d\sigma dv_{*}dv\\
 &\eqdefa& \mathfrak{I}_{3,2,1}+\mathfrak{I}_{3,2,2}.
\eeno
First look at the term $\mathfrak{I}_{3,2,1}$. Recall that $j = \frac{u-(u \cdot n)n}{|u-(u \cdot n)n|}$ in lemma \ref{lemma1},
then we have $j\cdot n = 0$, and thus
\beno
\int_{\SS^{2}}b(\cos\theta)\textbf{1}_{\theta\geq|v-v_{*}|^{-\alpha}}(E(\theta))^{p-1}h(j\cdot\omega)\sin\theta d\sigma = 0.
\eeno
Applying proposition \ref{lemma1} and the above equality, we obtain
\beno
\mathfrak{I}_{3,2,1} &\leq& \int_{\R^6\times\SS^2}b(\cos\theta)\sin^{2l}\frac{\theta}{2}|v-v_{*}|^{\gamma}|F^{\epsilon}_{R}(v_{*})|f
\langle v_{*} \rangle^{2l}d\sigma dv_{*}dv\\
&&+ c_{l}\int_{\R^6\times\SS^2}b(\cos\theta)\sin^{2}\theta|v-v_{*}|^{\gamma}|F^{\epsilon}_{R}(v_{*})|f
\langle v_{*} \rangle^{2l-2}\langle v \rangle^{2l-2}d\sigma dv_{*}dv\\
&\eqdefa& \mathfrak{I}_{3,2,1,1}+\mathfrak{I}_{3,2,1,2},
\eeno
where $c_{l} = 2^{l-3}(l(l-1)+4)$.
Thanks to the following fact:
\beno
 \int_{\SS^2}b(\cos\theta)\sin^{2l}\frac{\theta}{2}d\sigma &\leq& 2\pi K \int^{\pi/2}_{0}\theta^{-1-2s}\sin^{2l}\frac{\theta}{2}d\theta\\
&=&2^{1-2s}\pi K \int^{\pi/4}_{0}\eta^{-1-2s}\sin^{2l}\eta d\eta \\
&\leq& \frac{2^{-2s} \pi K}{l-s}(\frac{\pi}{4})^{2l-2s},
\eeno
and $|v-v_{*}|^{\gamma} \leq 2(\langle v \rangle^{\gamma}+\langle v_{*} \rangle^{\gamma})$, we have
\begin{eqnarray}\label{l1I3211f}
\mathfrak{I}_{3,2,1,1}\leq \frac{2^{1-2s} \pi K}{l-s}(\frac{\pi}{4})^{2l-2s}(||f||_{L^{1}}||F^{\epsilon}_{R}||_{L^{1}_{2l+\gamma}}
+||f||_{L^{1}_{\gamma}}||F^{\epsilon}_{R}||_{L^{1}_{2l}}).
\end{eqnarray}
Due to $|v-v_{*}|^{\gamma} \leq \langle v \rangle^{2} \langle v_{*} \rangle^{2}$, we obtain
\begin{eqnarray}\label{l1I3212f}
\mathfrak{I}_{3,2,1,2}\leq c_{l}A_{2}||f||_{L^{1}_{2l}}||F^{\epsilon}_{R}||_{L^{1}_{2l}}.
\end{eqnarray}

For the term $\mathfrak{I}_{3,2,2}$, we have
\begin{eqnarray}\label{l1I322f}
|\mathfrak{I}_{3,2,2}|&\leq& 4\pi K \int_{\R^6}\int^{\pi/2}_{|v-v_{*}|^{-\alpha}\wedge\pi/2} \theta^{-1-2s}
|v-v_{*}|^{\gamma}|F^{\epsilon}_{R}(v_{*})|f \langle v \rangle^{2l}d\theta dv_{*}dv \\
&\leq&\frac{2 \pi K}{s} ||f||_{L^{1}_{4l}}||F^{\epsilon}_{R}||_{L^{1}_{2l}}, \nonumber
\end{eqnarray}
provided $2 \alpha s + \gamma \leq 2l$.

Patch the above inequalities (\ref{l1I1f}),(\ref{l1I21f}),(\ref{l1I22f}),(\ref{l1I311f}), and (\ref{l1I312f})-(\ref{l1I322f}), for those $l$ such that $\frac{2^{1-2s} \pi K}{l-s}(\frac{\pi}{4})^{2l-2s} \leq \frac{A_{2}}{8}$, we have the following desired result:
\beno
\frac{d}{dt}||F^{\epsilon}_{R}||_{L^{1}_{2l}} &\leq& -\frac{A_{2}}{8} ||f_{0}||_{L^{1}}||F^{\epsilon}_{R}||_{L^{1}_{2l+\gamma}}
+C(7.1)||f^{\epsilon}||^{2}_{H^{5}_{2l + \gamma+12}}
\\&&+\big\{2^{2l+1}A_{2}||f^{\epsilon}||_{L^{1}_{2l}}+A_{2}l||f^{\epsilon}||_{L^{1}_{2l+\gamma}}
+C(cancel)||f||_{L^{1}_{2l+\gamma}} \\
&&+2^{2l+\frac{1}{2}} \frac{\pi^{2} K}{1-s} ||f||_{H^{2}_{2l+2}} +\frac{A_{2}}{8}||f||_{L^{1}_{\gamma}}\\
&&+c_{l}A_{2}||f||_{L^{1}_{2l}}+ \frac{2 \pi K}{s} ||f||_{L^{1}_{4l}}\}||F^{\epsilon}_{R}||_{L^{1}_{2l}},
\eeno
where we have used the mass conservation property: $||f_{t}||_{L^{1}}= ||f^{\epsilon}_{t}||_{L^{1}}=||f_{0}||_{L^{1}}$.
The propagation of $L$ and $H$ norms of $f$ and $f^{\epsilon}$ allows us to conclude:
\begin{eqnarray}\label{l1close2}
\frac{d}{dt}||F^{\epsilon}_{R}||_{L^{1}_{2l}} &\leq& -\frac{A_{2}}{8} ||f_{0}||_{L^{1}}||F^{\epsilon}_{R}||_{L^{1}_{2l+\gamma}}
+C(||f_{0}||_{L^{1}_{\phi(5,2l+\gamma+12)}},||f_{0}||_{H^{5}_{2l + \gamma+12}})\\
&&+C(||f_{0}||_{L^{1}_{\max\{41,\phi(2,2l+2)\}}},||f_{0}||_{H^{2}_{2l+2}})||F^{\epsilon}_{R}||_{L^{1}_{2l}}. \nonumber
\end{eqnarray}
Applying Gronwall's inequality (\ref{gronwall}) with $a = C(||f_{0}||_{L^{1}_{\phi(5,2l+\gamma+12)}},||f_{0}||_{H^{5}_{2l + \gamma+12}})$ \\and $b = C(||f_{0}||_{L^{1}_{\max\{41,\phi(2,2l+2)\}}},||f_{0}||_{H^{2}_{2l+2}})$, we have
\begin{eqnarray}\label{l1closef}
||F^{\epsilon}_{R}(t)||_{L^{1}_{2l}} &\leq& \frac{a}{b}(e^{bt}-1)\eqdefa C(f_{0},t). \nonumber
\end{eqnarray}

\vskip 0.5cm
We now prove theorem \ref{main4} in the rest of this section.

\noindent {\bf Proof of Theorem \ref{main4}:}\\
\noindent {\textit {Step 1}:} (Case $N=0$)\\
Taking the difference between equations (\ref{homb}) and (\ref{approximated}), and dividing both sides by $\epsilon^{3-2s}$, we have
\ben\label{errorequation2}
\partial _t F^{\epsilon}_{R} = \Upsilon(f) + M^{\epsilon}(f^{\epsilon},F^{\epsilon}_{R}) + M^{\epsilon}(F^{\epsilon}_{R},f) \nonumber
\een
Then we have
\ben \label{l2normweightl}
\frac{d}{dt}(\frac{1}{2}||F^{\epsilon}_{R}||^{2}_{L^{2}_{l}}) &=& \langle  \Upsilon(f) + M^{\epsilon}(f^{\epsilon},F^{\epsilon}_{R}) + M^{\epsilon}(F^{\epsilon}_{R},f), F^{\epsilon}_{R}\langle v \rangle^{2l} \rangle \nonumber \\
&\eqdefa&\mathfrak{I}_1+\mathfrak{I}_2+\mathfrak{I}_3 \nonumber
\een
Thanks to lemma 7.1 in the Appendix of \cite{he1}, we have
\beno
\mathfrak{I}_1  \lesssim ||f||^{2}_{H^{5}_{l+\gamma+10}}||F^{\epsilon}_{R}||_{L^{2}_{l}},
\eeno
which implies, for any $\eta > 0$,
\begin{eqnarray}\label{l2lI1}
\mathfrak{I}_1  - \eta ||F^{\epsilon}_{R}||^{2}_{L^{2}} \lesssim \frac{1}{\eta}||f||^{4}_{H^{5}_{l+\gamma+10}}.
\end{eqnarray}
Splitting $\mathfrak{I}_2$ into two terms
\begin{eqnarray}\label{l2lI2to2}
\mathfrak{I}_2  &=& \langle M^{\epsilon}(f^{\epsilon},F^{\epsilon}_{R}\langle v \rangle^{l}), F^{\epsilon}_{R}\langle v \rangle^{l} \rangle + \{\langle M^{\epsilon}(f^{\epsilon},F^{\epsilon}_{R})\langle v \rangle^{l}-M^{\epsilon}(f^{\epsilon},F^{\epsilon}_{R}\langle v \rangle^{l}), F^{\epsilon}_{R}\langle v \rangle^{l} \rangle\} \nonumber \\
&\eqdefa& \mathfrak{I}_{2,1}+\mathfrak{I}_{2,2}. \nonumber
\end{eqnarray}
By coercivity estimate (\ref{coercivityboltz}), we have
\begin{eqnarray}\label{l2lI21}
\mathfrak{I}_{2,1} \leq -C_{1}(f_{0})||F^{\epsilon}_{R}||^{2}_{\epsilon,l+\gamma/2}+ C_{2}(f_{0})||F^{\epsilon}_{R}||^{2}_{L^{2}_{l+\gamma/2}}.
\end{eqnarray}
By commutator estimate (\ref{commcboltz}) with $N_{2} = l + \gamma/2, N_{3} = \gamma/2$, we have,
\beno
\mathfrak{I}_{2,2} \lesssim  ||f^{\epsilon}||_{L^{1}_{2l+5}} ||F^{\epsilon}_{R}||_{\epsilon,l+\gamma/2} ||F^{\epsilon}_{R}||_{L^{2}_{l+\gamma/2}},
\eeno
which implies,  for any $\eta > 0$,
\begin{eqnarray}\label{l2lI22}
\mathfrak{I}_{2,2} -\eta ||F^{\epsilon}_{R}||^{2}_{\epsilon,l+\gamma/2} \lesssim \frac{1}{\eta} ||f^{\epsilon}||^{2}_{L^{1}_{2l+5}} ||F^{\epsilon}_{R}||^{2}_{L^{2}_{l+\gamma/2}}.
\end{eqnarray}
Splitting $\mathfrak{I}_3$ into two terms
\begin{eqnarray}\label{l2lI3to2}
\mathfrak{I}_3  &=& \langle M^{\epsilon}(F^{\epsilon}_{R},f \langle v \rangle^{l}), F^{\epsilon}_{R}\langle v \rangle^{l} \rangle_{v} + \{\langle M^{\epsilon}(F^{\epsilon}_{R},f)\langle v \rangle^{l}-M^{\epsilon}(F^{\epsilon}_{R},f \langle v \rangle^{l}), F^{\epsilon}_{R}\langle v \rangle^{l} \rangle_{v}\} \nonumber \\
&\eqdefa& \mathfrak{I}_{3,1}+\mathfrak{I}_{3,2}. \nonumber
\end{eqnarray}
Applying upper bound estimate (\ref{upcboltz}) with $ w_{1} = \gamma/2 + 2, w_{2} = \gamma/2$, we have
\beno
\mathfrak{I}_{3,1} \lesssim ||F^{\epsilon}_{R}||_{L^{1}_{\gamma+2}} ||f||_{H^{1}_{l+3}}||F^{\epsilon}_{R}||_{\epsilon, l+\gamma/2},
\eeno
which implies,  for any $\eta > 0$,
\begin{eqnarray}\label{l2lI31}
\mathfrak{I}_{3,1}-\eta ||F^{\epsilon}_{R}||^{2}_{\epsilon, l+\gamma/2} \lesssim \frac{1}{\eta} ||F^{\epsilon}_{R}||^{2}_{L^{1}_{\gamma+2}} ||f||^{2}_{H^{1}_{l+3}}.
\end{eqnarray}
By commutator estimate (\ref{commcboltz}), we have
\beno
\mathfrak{I}_{3,2}
\lesssim  ||F^{\epsilon}_{R}||_{L^{1}_{2l+5}} ||f||_{H^{1}_{l+\gamma/2}} ||F^{\epsilon}_{R}||_{L^{2}_{l+\gamma/2}} \nonumber \\
\eeno
which implies, for any $\eta>0$,
\begin{eqnarray}\label{l2lI32}
\mathfrak{I}_{3,2} - \eta ||F^{\epsilon}_{R}||^{2}_{L^{2}_{l+\gamma/2}} \lesssim \frac{1}{\eta}||F^{\epsilon}_{R}||^{2}_{L^{1}_{2l+5}} ||f||^{2}_{H^{1}_{l+\gamma/2}}.
\end{eqnarray}
Now setting $\eta = \frac{C_{1}(f_{0})}{8}$ in (\ref{l2lI1}),(\ref{l2lI22}),(\ref{l2lI31}),(\ref{l2lI32}), and combining with (\ref{l2lI21}), we have
\begin{eqnarray}\label{l2lclose1}
&&\frac{d}{dt}||F^{\epsilon}_{R}||^{2}_{L^{2}_{l}}  + C_{1}(f_{0})||F^{\epsilon}_{R}||^{2}_{\epsilon,l+\gamma/2} \\&\lesssim &
\{C_{2}(f_{0})+\frac{1}{\eta} ||f^{\epsilon}||^{2}_{L^{1}_{2l+5}}\}||F^{\epsilon}_{R}||^{2}_{L^{2}_{l+\gamma/2}} +\frac{1}{\eta}||f||^{4}_{H^{5}_{l+\gamma+10}} \nonumber \\
&&+\frac{1}{\eta} ||F^{\epsilon}_{R}||^{2}_{L^{1}_{\gamma+2}}||f||^{2}_{H^{1}_{l+3}}
+\frac{1}{\eta}||F^{\epsilon}_{R}||^{2}_{L^{1}_{2l+5}} ||f||^{2}_{H^{1}_{l+\gamma/2}} \nonumber
\end{eqnarray}
Now choosing $\lambda = \frac{C_{1}(f_{0})}{2}(C_{2}(f_{0})+\frac{1}{\eta} ||f^{\epsilon}||^{2}_{L^{1}_{2l+5}})^{-1}$ in (\ref{l2hsl1}), we have
\begin{eqnarray}\label{l2lclose2}
&&\frac{d}{dt}||F^{\epsilon}_{R}||^{2}_{L^{2}_{l}}  + \frac{C_{1}(f_{0})}{2}||F^{\epsilon}_{R}||^{2}_{\epsilon,l+\gamma/2}   \\&\lesssim&
\{C_{2}(f_{0})+\frac{1}{\eta} ||f^{\epsilon}||^{2}_{L^{1}_{2l+5}}\} \lambda ^{-\frac{3}{2s}}||F^{\epsilon}_{R}||^{2}_{L^{1}_{l+\gamma/2}}
+\frac{1}{\eta}||f||^{4}_{H^{5}_{l+\gamma+10}}
\nonumber \\&&+\frac{1}{\eta} ||F^{\epsilon}_{R}||^{2}_{L^{1}_{\gamma+2}}||f||^{2}_{H^{1}_{l+3}}
+\frac{1}{\eta}||F^{\epsilon}_{R}||^{2}_{L^{1}_{2l+5}} ||f||^{2}_{H^{1}_{l+\gamma/2}} \nonumber
\end{eqnarray}
According to theorem \ref{main3}, we have
\beno
||F^{\epsilon}_{R}(t)||_{L^{1}_{2l+5}} \leq C(||f_{0}||_{L^{1}_{\phi(5,2l+\gamma+17)}},||f_{0}||_{H^{5}_{2l+\gamma+17}},t).
\eeno
The other terms of the right hand side of (\ref{l2lclose2}) are also bounded by some lower order or lower weight norm of initial datum $f_{0}$, thus we arrive at
\beno
||F^{\epsilon}_{R}(t)||_{L^{2}_{l}} \leq C(||f_{0}||_{L^{1}_{\phi(5,2l+\gamma+17)}},||f_{0}||_{H^{5}_{2l+\gamma+17}},t).
\eeno
We remark that the dependence on $t$ is also at most exponential.

\noindent {\textit {Step 2}:} (Case $N \geq 1$)\\
Suppose inequality (\ref{errorl2}) holds true for all $N \leq m-1$, we now prove that it is also valid for $N = m$.

Let $g^{\epsilon}_{\alpha, l} = \langle v \rangle^{l} \partial^{\alpha}_{v}F^{\epsilon}_{R}$ with $|\alpha| \leq m$, then $g^{\epsilon}_{\alpha, l}$ solves
\begin{eqnarray}\label{galpha}
\pa_t g^{\epsilon}_{\alpha, l} = \sum_{\alpha_1+\alpha_2=\alpha} \binom{\alpha}{\alpha_{1}}
[M^{\epsilon}(\pa^{\alpha_1}_v f^\epsilon, \pa^{\alpha_2}_v F^{\epsilon}_{R}) +
M^{\epsilon}(\pa^{\alpha_1}_v F^{\epsilon}_{R} , \pa^{\alpha_2}_v f) + \Upsilon(\pa^{\alpha_1}_v f, \pa^{\alpha_2}_v f)]\langle v \rangle^{l} \nonumber
\end{eqnarray}
Therefore we have
\begin{eqnarray}\label{falphalequation}
\frac{d}{dt}(\frac{1}{2}||g^{\epsilon}_{\alpha, l}||^{2}_{L^{2}}) &=& \langle \pa_t g^{\epsilon}_{\alpha, l}, g^{\epsilon}_{\alpha, l}\rangle \\
&=&  \sum_{\alpha_1+\alpha_2=\alpha} \binom{\alpha}{\alpha_{1}} \{ \langle M^{\epsilon}(\pa^{\alpha_1}_v f^\epsilon, \pa^{\alpha_2}_v F^{\epsilon}_{R})\langle v \rangle^{l} , g^{\epsilon}_{\alpha, l} \rangle \nonumber \\
&&+ \langle M^{\epsilon}(\pa^{\alpha_1}_v F^{\epsilon}_{R}, \pa^{\alpha_2}_v f)\langle v \rangle^{l} , g^{\epsilon}_{\alpha, l} \rangle \nonumber \\
&&+ \langle \Upsilon(\pa^{\alpha_1}_v f, \pa^{\alpha_2}_v f) \langle v \rangle^{l} , g^{\epsilon}_{\alpha, l} \rangle
\} \nonumber \\
&\eqdefa& \sum_{\alpha_1+\alpha_2=\alpha} \binom{\alpha}{\alpha_{1}} \{\mathfrak{I}_{1}(\alpha_{1},\alpha_{2})+\mathfrak{I}_{2}(\alpha_{1},\alpha_{2})+
\mathfrak{I}_{3}(\alpha_{1},\alpha_{2})\}. \nonumber
\end{eqnarray}

Again by lemma 7.1 in the Appendix of \cite{he1}, we have
\beno
\langle \Upsilon(\pa^{\alpha_1}_v f, \pa^{\alpha_2}_v f) \langle v \rangle^{l} , g^{\epsilon}_{\alpha, l} \rangle \lesssim
||f||^{2}_{H^{m+5}_{l+\gamma+10}}||g^{\epsilon}_{\alpha, l} ||_{L^{2}},
\eeno
which implies, for any $\eta > 0$,
\begin{eqnarray}\label{hmli3}
\langle \Upsilon(\pa^{\alpha_1}_v f, \pa^{\alpha_2}_v f) \langle v \rangle^{l} , g^{\epsilon}_{\alpha, l} \rangle  - \eta ||g^{\epsilon}_{\alpha, l} ||_{L^{2}} \lesssim \frac{1}{\eta}||f||^{4}_{H^{m+5}_{l+\gamma+10}}.
\end{eqnarray}

Splitting $\mathfrak{I}_{1}(\alpha_{1},\alpha_{2})$ into two terms, we have
\beno
\mathfrak{I}_{1}(\alpha_{1},\alpha_{2}) &=& \langle M^{\epsilon}(\pa^{\alpha_1}_vf^\epsilon, \langle v \rangle^{l} \pa^{\alpha_2}_vF^{\epsilon}_{R}) , g^{\epsilon}_{\alpha, l} \rangle \\ &&+\{\langle M^{\epsilon}(\pa^{\alpha_1}_v f^\epsilon, \pa^{\alpha_2}_v F^{\epsilon}_{R})\langle v \rangle^{l} , g^{\epsilon}_{\alpha, l} \rangle -\langle M^{\epsilon}(\pa^{\alpha_1}_vf^\epsilon, \langle v \rangle^{l} \pa^{\alpha_2}_vF^{\epsilon}_{R} ) , g^{\epsilon}_{\alpha, l} \rangle\} \\
&\eqdefa& \mathfrak{I}_{1,1}(\alpha_{1},\alpha_{2}) + \mathfrak{I}_{1,2}(\alpha_{1},\alpha_{2}).
\eeno
By coercivity estimate (\ref{coercivityboltz}), we have
\begin{eqnarray}\label{hmli110m}
\mathfrak{I}_{1,1}(0,\alpha) \leq -C_{1}(f_{0})||g^{\epsilon}_{\alpha, l}||^{2}_{\epsilon,\gamma/2}+ C_{2}(f_{0})||g^{\epsilon}_{\alpha, l}||^{2}_{L^{2}_{\gamma/2}}.
\end{eqnarray}
For $1 \leq |\alpha_{1}| \leq |\alpha| \leq m$,  by upper bound estimate (\ref{upcboltz}) with $ w_{1} = \gamma/2 + 2, w_{2} = \gamma/2$, we have
\beno
\mathfrak{I}_{1,1}(\alpha_{1},\alpha_{2}) &\lesssim& ||\pa^{\alpha_1}_vf^\epsilon||_{L^{1}_{4}} ||\pa^{\alpha_2}_vF^{\epsilon}_{R} ||_{H^{1}_{l+\gamma/2 + 2}} ||g^{\epsilon}_{\alpha, l}||_{\epsilon,\gamma/2} \\
&\lesssim& ||f^\epsilon||_{H^{m}_{6}} ||F^{\epsilon}_{R}||_{H^{m}_{l+\gamma/2 + 2}}||g^{\epsilon}_{\alpha, l}||_{\epsilon,\gamma/2},
\eeno
which implies, for any $\eta_{1}>0$,
\begin{eqnarray}\label{hmli111m1}
\mathfrak{I}_{1,1}(\alpha_{1},\alpha_{2}) - \eta_{1} ||g^{\epsilon}_{\alpha, l}||^{2}_{\epsilon,\gamma/2} \lesssim \frac{1}{\eta_{1}}||f^\epsilon||^{2}_{H^{m}_{6}} ||F^{\epsilon}_{R}||^{2}_{H^{m}_{l+\gamma/2 + 2}}.
\end{eqnarray}
By commutator estimates (\ref{commcboltz}) with $N_{2} = l + \gamma/2, N_{3} = \gamma/2$, we have
\beno
\mathfrak{I}_{1,2}(\alpha_{1},\alpha_{2}) \lesssim  ||f^{\epsilon}||_{H^{m}_{2l+7}} ||\pa^{\alpha_2}_v F^{\epsilon}_{R}||_{\epsilon,l+\gamma/2} ||g^{\epsilon}_{\alpha, l}||_{L^{2}_{\gamma/2}},
\eeno
which implies, for any $\eta_{2} > 0$,
\begin{eqnarray}\label{hmli12anym}
\mathfrak{I}_{1,2}(\alpha_{1},\alpha_{2})
-\eta_{2} ||\pa^{\alpha_2}_v F^{\epsilon}_{R}||^{2}_{\epsilon,l+\gamma/2} \lesssim \frac{1}{\eta_{2}} ||f^{\epsilon}||^{2}_{H^{m}_{2l+7}} ||g^{\epsilon}_{\alpha, l}||^{2}_{L^{2}_{\gamma/2}}.
\end{eqnarray}

Splitting $\mathfrak{I}_{2}(\alpha_{1},\alpha_{2})$ into two terms, we have
\beno
\mathfrak{I}_{2}(\alpha_{1},\alpha_{2}) &=& \langle M^{\epsilon}(\pa^{\alpha_1}_v F^{\epsilon}_{R}, \langle v \rangle^{l} \pa^{\alpha_2}_v f ) , g^{\epsilon}_{\alpha, l} \rangle \\ &&+\{\langle M^{\epsilon}(\pa^{\alpha_1}_v F^{\epsilon}_{R}, \pa^{\alpha_2}_v f)\langle v \rangle^{l} , g^{\epsilon}_{\alpha, l} \rangle -\langle M^{\epsilon}(\pa^{\alpha_1}_v F^{\epsilon}_{R}, \langle v \rangle^{l} \pa^{\alpha_2}_v f ) , g^{\epsilon}_{\alpha, l} \rangle\} \\
&\eqdefa& \mathfrak{I}_{2,1}(\alpha_{1},\alpha_{2}) + \mathfrak{I}_{2,2}(\alpha_{1},\alpha_{2}).
\eeno
Applying upper bound estimate (\ref{upcboltz}) with $ w_{1} = \gamma/2 + 2, w_{2} = \gamma/2$, we may have
\beno
\mathfrak{I}_{2,1}(\alpha_{1},\alpha_{2}) &\lesssim& ||\pa^{\alpha_1}_v F^{\epsilon}_{R}||_{L^{1}_{4}} ||\pa^{\alpha_2}_v f ||_{H^{1}_{l+\gamma/2 + 2}} ||g^{\epsilon}_{\alpha, l}||_{\epsilon,\gamma/2} \\
&\lesssim& ||F^{\epsilon}_{R}||_{H^{m}_{6}} ||f||_{H^{m+1}_{l+\gamma/2 + 2}}||g^{\epsilon}_{\alpha, l}||_{\epsilon,\gamma/2},
\eeno
which implies, for any $\eta_{1}>0$,
\begin{eqnarray}\label{hmli21anym}
\mathfrak{I}_{2,1}(\alpha_{1},\alpha_{2}) - \eta_{1} ||g^{\epsilon}_{\alpha, l}||^{2}_{\epsilon,\gamma/2} \lesssim \frac{1}{\eta_{1}}||f||^{2}_{H^{m+1}_{l+\gamma/2 + 2}} ||F^{\epsilon}_{R}||^{2}_{H^{m}_{6}}.
\end{eqnarray}
By commutator estimate (\ref{commcboltz}) with $N_{2} = l + \gamma/2, N_{3} = \gamma/2$, we have
\beno
\mathfrak{I}_{2,2}(\alpha_{1},\alpha_{2})
\lesssim  ||F^{\epsilon}_{R}||_{H^{m}_{2l+7}} ||f||_{H^{m+1}_{l+\gamma/2}} ||g^{\epsilon}_{\alpha, l}||_{L^{2}_{\gamma/2}},
\eeno
which implies, for any $\eta_{1} > 0$,
\begin{eqnarray}\label{hmli22anym}
\mathfrak{I}_{2,2}(\alpha_{1},\alpha_{2}) - \eta_{1} ||g^{\epsilon}_{\alpha, l}||^{2}_{L^{2}_{\gamma/2}} \lesssim \frac{1}{\eta_{1}}||f||^{2}_{H^{m+1}_{l+\gamma/2}}||F^{\epsilon}_{R}||^{2}_{H^{m}_{2l+7}}.
\end{eqnarray}

Patching all together (\ref{hmli3}),(\ref{hmli110m}),(\ref{hmli111m1}),(\ref{hmli12anym}),(\ref{hmli21anym}),(\ref{hmli22anym}), and  taking $\eta_{1}$ small enough, we arrive at
\begin{eqnarray}\label{errorhmlclose1}
&&\frac{d}{dt}(\frac{1}{2}||g^{\epsilon}_{\alpha, l}||^{2}_{L^{2}}) + \frac{3 C_{1}(f_{0})}{4}||g^{\epsilon}_{\alpha, l}||^{2}_{\epsilon,\gamma/2}-\eta_{2} ||F^{\epsilon}_{R}||^{2}_{\epsilon,m,l+\gamma/2} \\&\lesssim& ||f||^{4}_{H^{m+5}_{l+\gamma+10}}
+ ( C_{2}(f_{0})+ \frac{1}{\eta_{2}} ||f^{\epsilon}||^{2}_{H^{m}_{2l+7}} )||g^{\epsilon}_{\alpha, l}||^{2}_{L^{2}_{\gamma/2}} \nonumber \\
&& + \frac{1}{\eta_{1}}(||f^\epsilon||^{2}_{H^{m}_{6}} +  ||f||^{2}_{H^{m+1}_{l+\gamma/2 + 2}}+||f||^{2}_{H^{m+1}_{l+\gamma/2}})||F^{\epsilon}_{R}||^{2}_{H^{m}_{2l+7}}. \nonumber
\end{eqnarray}
Let $a(m) = \sum_{r=0}^{m}\binom{r+2}{r}$. Summing over $|\alpha| \leq m$, by taking $\eta_{2} = \frac{C_{1}(f_{0})}{4}\frac{1}{a(m)}$, we have
\begin{eqnarray}\label{errorhmlclose2}
&&\frac{d}{dt}(\frac{1}{2}||F^{\epsilon}_{R}||^{2}_{H^{m}_{l}})  + \frac{C_{1}(f_{0})}{2}||F^{\epsilon}_{R}||^{2}_{\epsilon, m ,l+\gamma/2} \\ &\lesssim& ||f||^{4}_{H^{m+5}_{l+\gamma+10}} + (C_{2}(f_{0})+ \frac{1}{\eta_{2}} ||f^{\epsilon}||^{2}_{H^{m}_{2l+7}}  )||F^{\epsilon}_{R}||^{2}_{H^{m}_{l+\gamma/2}} \nonumber \\
&& + \frac{1}{\eta_{1}}(||f^\epsilon||^{2}_{H^{m}_{6}} + ||f||^{2}_{H^{m+1}_{l+\gamma/2 + 2}}+||f||^{2}_{H^{m+1}_{l+\gamma/2}})||F^{\epsilon}_{R}||^{2}_{H^{m}_{2l+7}}.  \nonumber
\end{eqnarray}
Thanks to (\ref{hmhmsmminus1}), we may conclude
\begin{eqnarray}\label{errorhmlclose3}
\frac{d}{dt}||F^{\epsilon}_{R}||^{2}_{H^{m}_{l}}    + \frac{C_{1}(f_{0})}{2}||F^{\epsilon}_{R}||^{2}_{\epsilon, m ,l+\gamma/2} \lesssim C(||f||_{H^{m+5}_{l+\gamma+10}}, ||F^{\epsilon}_{R}||_{H^{m-1}_{z(l)}}, ||f_{0}||_{LlogL},||f_{0}||_{L^{1}_{1}}). \nonumber
\end{eqnarray}
By theorem \ref{main2} and remark \ref{alsotrue}, for any $t \geq 0$, we have
\beno
||f(t)||_{H^{m+5}_{l+\gamma+10}} \lesssim C(||f_{0}||_{L^{1}_{\phi(m+5,l+\gamma+10)}},||f_{0}||_{H^{m+5}_{l+\gamma+10}}).
\eeno
By assumption, there holds
\beno
||F^{\epsilon}_{R}(t)||_{H^{m-1}_{z(l)}} \lesssim C(||f_{0}||_{L^{1}_{\varphi(m-1,z(l))}}, ||f_{0}||_{H^{m+4}_{\psi(m-1,z(l))}}, t).
\eeno
On the other hand, by interpolation, we have
\beno
||f_{0}||_{H^{m+4}_{\psi(m-1,z(l))}} \lesssim ||f_{0}||_{H^{m+5}_{l+\gamma+10}}+||f_{0}||_{L^{1}_{\rho(m,l)}},
\eeno
where $\rho(m,l) = (m+7)\psi(m-1,z(l)) - (m+6)(l+\gamma+10)$. By defining $\varphi(m,l) = \max\{\varphi(m-1,z(l)), \rho(m,l)\}$,  we have
\beno
||F^{\epsilon}_{R}(t)||^{2}_{H^{m}_{l}} \leq C(||f_{0}||_{L^{1}_{\varphi(m,l)}}, ||f_{0}||_{H^{m+5}_{l+\gamma+10}}, t).
\eeno

\bigskip

{\bf Acknowledgments.} The authors thank the first 2016 exchange program between Hong Kong and the mainland of China, supported by the Ministry of education of the People's Republic of China. They also express their gratitude to the Department of Mathematical Sciences of Tsinghua University and the Department of Mathematics of Hong Kong Baptist University for the kindly hospitality.

\end{document}